\newcommand{\dbar}{\overline{\partial}}
\newcommand{\OO}{\mathcal{O}}
\newcommand{\ddbar}{\sqrt{-1}\partial\dbar}
\newtheorem{theorem}{Theorem}[section]
\newtheorem{proposition}{Proposition}[section]
\newtheorem{lemma}{Lemma}[section]
\newtheorem{conjecture}{Conjecture}[section]
\newtheorem{definition}{Definition}[section]
\newtheorem{corollary}{Corollary}[section]
\newcommand{\PP}{\mathbb{P}}
\newcommand{\CC}{\mathbb{C}}
\begin{document}

\title{On a conjecture of Candelas and de la Ossa}

\author{Jian Song}

\address{Department of Mathematics, Rutgers University, Piscataway, NJ 08854}

\email{jiansong@math.rutgers.edu}

\thanks{Research supported in
part by National Science Foundation grant DMS-0847524 and  a Sloan Foundation Fellowship.}

\begin{abstract} We prove that the metric completion of a canonical Ricci-flat K\"ahler metric on the nonsingular part of a projective Calabi-Yau variety $X$ with ordinary double point singularities, is a compact metric length space homeomorphic to the projective variety $X$ itself. As an application, we prove a conjecture of Candelas and de la Ossa for conifold flops and transitions.

\end{abstract}

\maketitle


\section{\bf Introduction}\label{section1}

Yau's solution to the Calabi conjecture in \cite{Ya1} gives the existence of a unique Ricci-flat K\"ahler metric in any given K\"ahler class. Calabi-Yau manifolds and Ricci-flat K\"ahler metrics play a central role in the study of string theory. A natural problem in both mathematics and physics is to understand how Calabi-Yau manifolds of distinct topological types can be connected via algebraic, analytic and geometric processes. In the algebraic aspect, this is exactly the well-known Reid's fantasy \cite{Re} built on deep works of Clemens \cite{Cl}, Friedman \cite{Fr}, Hirzebruch \cite{Hi} and many others (cf. \cite{T1, CaO, CGH, T2, Gro1, Gro2, GW2, Ro}). A geometric transition is an algebraic notion of connectedness for the moduli space of Calabi-Yau threefolds, which  involves with a birational contraction and a complex smooth deformation.  It can be considered as the three dimensional analogue of analytic deformations among $K3$ surfaces. A conifold transition is a special geoemtric transition, where the contracted variety has only ordinary double points as singularity. The first physical interpretation of a conifold transition is given by Strominger \cite{Str}. In geometric and analytic aspect, the geometric transition should be considered for Calabi-Yau varieties coupled with canonical metrics such as Ricci-flat K\"ahler metrics. In \cite{CaO}, Candelas and de la Ossa conjecture that an algebraic conifold transition should be also analytic and geometric, i.e., the transition should be continuous in suitable geometric sense (Conjecture \ref{conj}). The recent work of Rong and Zhang \cite{RZ} proves a version of their conjecture by showing  that an algebraic geometric transition is indeed continuous in Gromov-Hausdorff topology. The goal of the paper is to establish a strong version of Canelas and de la Ossa's conjecture for conifold transitions. In general, a geometric transition is not necessarily projective or even K\"ahler \cite{Fr, T2}. The balanced metrics on non-K\"ahler Calabi-Yau threefolds are proposed in the direction to study non-K\"ahler geometric transitions \cite{FLY}.

We now state the main results of the paper.  Let $f: X\rightarrow Y$ be a small contraction morphism of a smooth projective Calabi-Yau threefold $X$ such that $Y$ is a normal Calabi-Yau variety with only ordinary double points as singularities. Let $\mathcal{L}_0$ be an ample line bundle over $Y$ and $\alpha$ be a K\"ahler class on $X$. Then there also exists a unique Ricci-flat K\"ahler metric $g(t)\in c_1(\alpha+ t[\pi^*\mathcal{L}_0])$ for $t\in (0, 1]$ by Yau's theorem. There exists a unique singular Ricci-flat K\"ahler metric $g_Y$ associated to its K\"ahler current $\omega_Y\in c_1(\mathcal{L}_0)$ obtained in \cite{EGZ}. In particular, $\omega_Y$ has bounded local potentials on $Y$ and $g_Y$ is a smooth K\"ahler metric on $Y_{reg}$, the nonsingular part of $Y$ \cite{EGZ}.

\begin{theorem}\label{main1}  The metric completion of $(Y_{reg}, g_Y)$ is a compact length space homeomorphic to the projective variety $Y$ itself, denoted by $(Y, d_Y)$. Furthermore, $(X, g(t))$ converges to $( Y, d_Y)$ in Gromov-Hausdorff topology, as $t\rightarrow 0$.

\end{theorem}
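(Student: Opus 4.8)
The plan is to transfer the whole problem to the smooth resolution $f:X\to Y$, where it becomes a statement about a degenerating family of Ricci-flat K\"ahler metrics whose K\"ahler classes approach the pullback of a class contracting the exceptional curves $E=f^{-1}(\mathrm{Sing}\,Y)$ (a disjoint union of $(-1,-1)$-curves, the conifold case), so that the exceptional $\PP^1$'s shrink to the nodes. First I would fix a smooth reference K\"ahler form $\omega_Y^{\mathrm{ref}}\in c_1(\mathcal L_0)$ on $Y$, and, using the explicit Candelas--de la Ossa Ricci-flat metrics on the small resolution of the conifold, build background K\"ahler metrics $\hat\omega_t$ on $X$ that coincide with $f^*\omega_Y^{\mathrm{ref}}$ away from $E$ and are modeled on the Candelas--de la Ossa metric (with resolution parameter comparable to $t$) in a fixed neighborhood of $E$. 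The point of this choice is that $\hat\omega_t$ has \emph{uniformly bounded diameter}, that $\hat\omega_t^{\,3}=e^{u_t}\Omega_X$ with $u_t$ bounded in $L^\infty$ uniformly in $t$ (here $\Omega_X=f^*\Omega_Y$ is the Calabi--Yau volume form), and that $f^{-1}$ of a small ball about a node has $\hat\omega_t$-diameter tending to $0$ as the ball shrinks, uniformly in $t$. Writing $\omega(t)$ for the K\"ahler form of $g(t)$ as $\omega(t)=\hat\omega_t+\ddbar\varphi_t$, the equation $(\hat\omega_t+\ddbar\varphi_t)^3=c_t\,\Omega_X$ then yields, via the Ko{\l}odziej / Eyssidieux--Guedj--Zeriahi $L^\infty$ estimate, a uniform bound $\|\varphi_t\|_{L^\infty}\le C$; standard local Schauder/Evans--Krylov estimates on $X\setminus E$, where the reference metrics converge smoothly, give $\omega(t)\to f^*\omega_Y$ in $C^\infty_{\mathrm{loc}}(X\setminus E)$; and the crucial estimate is a uniform second-order bound $\omega(t)\le C\hat\omega_t$.

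Granting these estimates, I would prove the first assertion by analyzing $g_Y$ near a node $p\in Y$. On the local model of an ordinary double point, the affine conifold $\{z_1^2+\cdots+z_4^2=0\}\subset\CC^4$, a metric cone over $S^2\times S^3$, the trivialization of $K_Y$ makes $\Omega_Y$ a fixed smooth positive volume form, so $\omega_Y^3=c\,\Omega_Y$ together with the maximum principle applied to suitable barrier functions adapted to the cone structure traps $\omega_Y$ between two-sided barriers of the form $c\,\omega_{\mathrm{euc}}\le\omega_Y\le C\,\ddbar(|z|^{4/3})$ near $p$. Three consequences follow: radial paths into $p$ have finite $g_Y$-length, since the trace of $\ddbar(|z|^{4/3})$ is $\asymp|z|^{-2/3}$ and $\int_0^\delta|z|^{-1/3}\,d|z|<\infty$, so the metric completion adjoins a point over $p$; the tubular neighborhood $\{|z|<\delta\}\cap Y_{reg}$, being connected with the connected link $S^2\times S^3$, has $g_Y$-diameter $O(\delta^{2/3})\to0$, so exactly one point is adjoined over each node; and the lower barrier gives $d_Y(x,p)\ge c\,|z(x)|$, so $d_Y$-convergence to $p$ forces convergence in $Y$. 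Combining this with a diameter bound, obtained by covering $Y$ by finitely many neck regions of small $g_Y$-diameter together with a relatively compact piece of the smooth manifold $(Y_{reg},g_Y)$, the completion $\overline{Y_{reg}}$ is compact; the tautological bijection $\overline{Y_{reg}}\to Y$ is continuous by the last consequence and by smoothness of $g_Y$ on $Y_{reg}$, hence a homeomorphism. This produces the space $(Y,d_Y)$.

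For the Gromov--Hausdorff convergence I would proceed as follows. The bound $\omega(t)\le C\hat\omega_t$ together with $\mathrm{diam}(X,\hat\omega_t)\le C$ gives a uniform diameter bound for $(X,g(t))$, hence Gromov--Hausdorff precompactness of the family as $t\to0$. To identify the limit, observe that on compact subsets of $X\setminus E\cong Y_{reg}$ the metrics $g(t)$ converge smoothly to $g_Y$, while $f^{-1}$ of a small ball about a node has $g(t)$-diameter at most $C$ times its $\hat\omega_t$-diameter, which tends to $0$ with the radius of the ball, uniformly in $t$. Therefore the contraction $f:(X,g(t))\to(Y,d_Y)$ is surjective and an $\varepsilon_t$-isometry with $\varepsilon_t\to0$: for one inequality one lifts an almost-minimizing $g_Y$-geodesic in $Y$ to $X$, joining the lifted pieces across node regions at negligible cost using their small $g(t)$-diameter; for the other, one pushes an almost-minimizing $g(t)$-geodesic in $X$ down to $Y$, its portion inside the neck regions contributing negligibly to $d_Y$ and its portion outside being measured by the $C^\infty_{\mathrm{loc}}$-limit $f^*\omega_Y$ of $\omega(t)$. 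Hence $(X,g(t))\to(Y,d_Y)$, and since the limit is independent of the sequence $t\to0$, the whole family converges.

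The main obstacle is the uniform second-order estimate $\omega(t)\le C\hat\omega_t$, equivalently the upper barrier for $\omega_Y$ near the nodes. The model Candelas--de la Ossa metrics degenerate in the neck as $t\to0$ with curvature blowing up, so a crude Chern--Lu or maximum-principle computation for $\mathrm{tr}_{\hat\omega_t}\omega(t)$ does not close with $t$-independent constants; one must exploit the precise conical asymptotics of the ordinary double point and engineer $\hat\omega_t$ carefully enough that a suitably scaled or weighted Laplacian estimate closes uniformly. Once this and the uniform $L^\infty$ bound on $\varphi_t$ are established, the remaining ingredients—local elliptic regularity away from $E$, Ko{\l}odziej-type pluripotential theory, and soft metric-geometry arguments for the completion and the Gromov--Hausdorff approximation—are routine.
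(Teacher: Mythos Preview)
Your overall architecture---transfer to the resolution, uniform $L^\infty$ via Ko{\l}odziej/EGZ, local smooth convergence away from $E$, and then a metric-geometry argument showing $f$ is an $\varepsilon_t$-isometry---matches the paper's strategy. However, there is a genuine gap at exactly the point you flag as ``the main obstacle'': you do not supply the mechanism for the second-order estimate, and the paper's entire technical contribution is precisely the new idea that fills this gap.

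You propose to prove $\omega(t)\le C\hat\omega_t$ by ``engineering $\hat\omega_t$ carefully enough that a suitably scaled or weighted Laplacian estimate closes uniformly.'' The paper explicitly states that this direct approach fails: no reference metric with admissible curvature is available, because the exceptional locus has codimension two and the relevant curvature components of any reasonable model tend to $-\infty$ near the zero section. The paper's resolution of this difficulty is a \emph{meromorphic slicing} trick that you do not mention. One uses the flop structure to view the local model $E=\OO_{\PP^1}(-1)^{\oplus 2}$ as a meromorphic family of surfaces $L_w\simeq\OO_{\PP^1}(-1)$ parametrized by the \emph{flopped} $\PP^1$, on each of which the exceptional locus has codimension one. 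One then defines a \emph{partial} trace $H=\mathrm{tr}_{\tau|_{L_w}}(\omega(t)|_{L_w})$ with respect to a degenerate form $\tau$ that is flat along every $L_w$; flatness kills the bad curvature terms and yields $\Delta_t\log H\ge 0$. Combining this with a subharmonicity estimate for $\log|V|^2_{\omega(t)}$ where $V$ is the radial holomorphic vector field, the maximum principle gives $|V|^2_{\omega(t)}\cdot H\le C$, hence the radial bound $|W|^2_{g(t)}\le Ce^{-\rho/2}$ for the unit radial direction $W$.

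Note also that the paper does \emph{not} obtain your target estimate $\omega(t)\le C\hat\omega_t$. What it proves is $C^{-1}\omega_{\hat E}\le\omega(t)\le Ce^{-\rho}\omega_{\hat E}$, whose upper bound blows up at the zero section; the separate radial estimate above is what makes radial paths have uniformly bounded (indeed, small) $g(t)$-length and drives the diameter argument. Your barrier $\omega_Y\le C\,\ddbar(|z|^{4/3})$ for the limit metric is morally the cone model, but establishing it rigorously is exactly the same missing second-order estimate in the limit; the paper does not prove this either, and instead deduces the completion statement from the uniform radial control on the approximating family together with the results of Rong--Zhang and Song--Weinkove.
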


Theorem \ref{main1} shows that the algebraic small contraction can be realized by a continuous deformation of smooth Calabi-Yau K\"ahler metrics in Gromov-Hausdorff topology. In fact,  much stronger estimates are obtained in section 4 for degeneration of the Calabi-Yau metrics near the exceptional rational curves of $f$. Theorem \ref{main1} can be also applied to conifold flops and transitions as stated in the following corollaries. We also remark that the convergence is in fact smooth outside the exceptional rational curves as shown in \cite{To1}.

\begin{corollary} \label{main2}  Let

\begin{equation}
\begin{diagram}\label{conflop}
\node{X} \arrow{se,b,}{f} \arrow[2]{e,t,..}{ }     \node[2]{X'} \arrow{sw,r}{f'} \\
\node[2]{Y}
\end{diagram}
\end{equation}
be a conifold flop between two smooth projective Calabi-Yau threefolds $X$ and $X'$. Let $(Y, d_Y)$ be the compact metric length space induced by the singular Ricci-flat K\"ahler metric $g_Y$ as in Theorem \ref{main1}. Then there exist a smooth family of smooth Ricci-flat K\"ahler metrics $g(t)$ of $X$ and a smooth family of smooth Ricci-flat K\"ahler metrics $g'(s)$ of $X'$ for $t, s \in (0, 1]$, such that $(X, g(t))$ and $(X', g'(s))$ converge to $(Y, d_Y)$ in Gromov-Hausdorff topology as $t, s\rightarrow 0$.

\end{corollary}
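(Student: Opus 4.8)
The plan is to deduce Corollary \ref{main2} from Theorem \ref{main1} by applying the latter symmetrically to the two small resolutions $f\colon X \to Y$ and $f'\colon X' \to Y$ of the common conifold $Y$. The first step is to record the structural content of a conifold flop: the birational map $X \dashrightarrow X'$ in diagram \eqref{conflop} factors through the normal projective Calabi-Yau threefold $Y$, and both $f$ and $f'$ are small contraction morphisms contracting finitely many rational curves to the ordinary double points of $Y$. Thus $Y$ is one fixed variety to which the hypotheses of Theorem \ref{main1} apply with $X$ replaced by either $X$ or $X'$.

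Next I would fix, once and for all, a single ample line bundle $\mathcal{L}_0$ on $Y$. By \cite{EGZ} this determines a unique singular Ricci-flat K\"ahler metric $g_Y$ with $\omega_Y \in c_1(\mathcal{L}_0)$, smooth on $Y_{reg}$ with bounded local potentials, and by Theorem \ref{main1} the metric completion of $(Y_{reg}, g_Y)$ is a single compact length space $(Y, d_Y)$ homeomorphic to $Y$. The key point is that this target metric space depends only on $(Y, \mathcal{L}_0)$ and not on which side of the flop is used to approach it.

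Then I would pick a K\"ahler class $\alpha$ on $X$ and a K\"ahler class $\alpha'$ on $X'$ and invoke Theorem \ref{main1} twice. Applied to $f\colon X \to Y$ with the data $(\alpha, \mathcal{L}_0)$, it yields for $t \in (0,1]$ the unique Ricci-flat K\"ahler metric $g(t) \in c_1(\alpha + t[f^*\mathcal{L}_0])$ together with the convergence $(X, g(t)) \to (Y, d_Y)$ in Gromov-Hausdorff topology as $t \to 0$; applied to $f'\colon X' \to Y$ with $(\alpha', \mathcal{L}_0)$, it yields for $s \in (0,1]$ the unique Ricci-flat K\"ahler metric $g'(s) \in c_1(\alpha' + s[(f')^*\mathcal{L}_0])$ with $(X', g'(s)) \to (Y, d_Y)$ in Gromov-Hausdorff topology as $s \to 0$. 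Since both families degenerate to the \emph{same} limit space $(Y, d_Y)$, this is precisely the statement of the corollary.

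It remains only to check that $t \mapsto g(t)$ and $s \mapsto g'(s)$ are smooth families. Writing $g(t) = \omega_{0,t} + \ddbar \varphi_t$ with $\omega_{0,t}$ a reference K\"ahler form depending affinely on $t$ in the class $\alpha + t[f^*\mathcal{L}_0]$, the potential $\varphi_t$ solves a complex Monge-Amp\`ere equation whose right-hand side (the Calabi-Yau volume form, normalized by a constant polynomial in $t$) varies smoothly in $t$; since for $t \in (0,1]$ the classes are K\"ahler, the linearized operator is invertible by the maximum principle and Schauder estimates, so the implicit function theorem on H\"older spaces gives smooth dependence on $t \in (0,1]$. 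In short, the corollary carries no essential difficulty of its own: the delicate analysis is the $t \to 0$ degeneration handled by Theorem \ref{main1} and the estimates of section 4, and the main ``obstacle'' is simply the observation that a conifold flop presents $Y$ as a common small resolution target for $X$ and $X'$, so that a single choice of polarization $\mathcal{L}_0$ forces the two Gromov-Hausdorff limits to coincide.
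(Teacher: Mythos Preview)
Your proposal is correct and matches the paper's own approach: the paper simply states that Corollary \ref{main2} ``is a straightforward consequence of Theorem \ref{main1}'', and your argument spells out exactly this, applying Theorem \ref{main1} to each of $f\colon X\to Y$ and $f'\colon X'\to Y$ with a single fixed ample $\mathcal{L}_0$ on $Y$ so that the two Gromov--Hausdorff limits coincide. Your additional paragraph verifying smooth dependence of $g(t)$ and $g'(s)$ on the parameter via the implicit function theorem is more than the paper provides, but it is correct and harmless.
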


It shows that any discrete conifold flop between two Calabi-Yau threefolds can be connected by a continuous path of Calabi-Yau metrics in Gromov-Hausdorff topology.

Theorem \ref{main1} can also be applied to conifold transitions of Calabi-Yau threefolds. An algebraic geometric transition (cf. \cite{Ro, RZ}) is a triple $T(X, Y, Y_s)$ connecting Calabi-Yau threefolds of different topological types, where $Y$ is a singular Calabi-Yau variety obtained from $X$ by a birational contraction morphism and $Y_s$ is a smooth complex deformation of $Y$. A conifold transition  is a geometric transition such that  the contracted singular Calabi-Yau variety $Y$ has only ordinary double points as singularities. The precise definitions are given in Section 2. The following corollary shows that an algebraic conifold transition is also a diffeo-geometric transition via continuous families of Ricci-flat K\"ahler metrics.

\begin{corollary}\label{main3}
Let $T(X, Y, Y_s)$ be a conifold transition of projective Calabi-Yau threefolds
\begin{equation}
\begin{diagram}\label{geotran}
\node{X}    \arrow[2]{e,t}{f }     \node[2]{Y } \node[2]{Y_s} \arrow[2]{w}\\
\end{diagram}
\end{equation}
for $s\in \Delta$, where $\Delta$ is  the unit disc in $\CC$.
Then there exist a smooth family of smooth Ricci-flat K\"ahler metrics $g(t)$ of $X$ for $t \in (0, 1]$ and a smooth family of smooth Ricci-flat K\"ahler metrics $g_{Y_s}$ of $Y_s$ for $s\in \Delta^*$, such that $(X, g(t))$ and $(Y_s, g_{Y_s})$ converge to $(Y, d_Y)$ in Gromov-Hausdorff topology as $t, s\rightarrow 0$. Here $(Y, d_Y)$ is the compact length metric space given in Theorem \ref{main1}.

\end{corollary}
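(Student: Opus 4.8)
The contraction side is immediate from Theorem \ref{main1}: fixing an ample line bundle $\mathcal{L}_0$ on $Y$ and a K\"ahler class $\alpha$ on $X$, Theorem \ref{main1} supplies the smooth family $g(t)\in c_1(\alpha+t[f^*\mathcal{L}_0])$, $t\in(0,1]$, of Ricci-flat K\"ahler metrics on $X$ together with the Gromov--Hausdorff convergence $(X,g(t))\to(Y,d_Y)$. So the real content is the smoothing side, and the plan is to produce the metrics $g_{Y_s}$ and show $(Y_s,g_{Y_s})\to(Y,d_Y)$ as $s\to 0$. First I would fix, after shrinking $\Delta$, a flat projective family $\pi:\mathcal{Y}\to\Delta$ realizing the conifold transition, with $\mathcal{Y}_0=Y$, $\mathcal{Y}_s=Y_s$ for $s\neq 0$, smooth total space, trivial relative canonical bundle $K_{\mathcal{Y}/\Delta}$, and a $\pi$-ample line bundle $\mathcal{L}$ restricting to $\mathcal{L}_0$ on $Y$. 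Choosing a smooth K\"ahler metric $\omega_{\mathcal{Y}}\in c_1(\mathcal{L})$ with fibrewise restrictions $\omega_s$, and a nowhere-zero section of $K_{\mathcal{Y}/\Delta}$ giving volume forms $\Omega_s$ on $Y_s$ normalized so that $\int_{Y_s}\Omega_s=\int_{Y_s}\omega_s^3$, Yau's theorem yields for $s\neq 0$ a unique $\varphi_s\in C^\infty(Y_s)$ with $(\omega_s+\ddbar\varphi_s)^3=\Omega_s$ and $\sup_{Y_s}\varphi_s=0$; set $g_{Y_s}=\omega_s+\ddbar\varphi_s$. For $s=0$ this is the singular metric $g_Y$ of Theorem \ref{main1}, and smooth dependence of $g_{Y_s}$ on $s\in\Delta^*$ follows from the implicit function theorem on the smooth fibres.

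The next step is uniform control on the bulk. Since $K_{\mathcal{Y}/\Delta}$ is trivial and $\mathcal{L}$ is $\pi$-ample, the densities $\Omega_s/\omega_s^3$ lie in $L^p(Y_s,\omega_s)$ uniformly in $s$ for some fixed $p>1$, so the pluripotential estimates of Kolodziej and Eyssidieux--Guedj--Zeriahi \cite{EGZ} give a uniform bound $\|\varphi_s\|_{L^\infty(Y_s)}\leq C$. Away from the finitely many nodes of $Y$ the family $Y_s\to Y$ is a smooth submersion, so after identifying a fixed compact piece of $Y_{reg}$ with compact pieces of $Y_s$ through the trivialization, the complex structures and $\omega_s$ converge smoothly; combined with the $L^\infty$ bound, the local higher-order estimates for the complex Monge--Amp\`ere equation (the stability argument of Rong--Zhang \cite{RZ}, or the estimates of Section 4) then give $g_{Y_s}\to g_Y$ in $C^\infty_{\mathrm{loc}}(Y_{reg})$.

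The crux is the analysis near the vanishing cycles. Near a node $Y$ is modeled on the conifold $\{z\in\CC^4:\sum z_i^2=0\}$ and $Y_s$ on its smoothing $\{\sum z_i^2=\lambda(s)\}$ with $\lambda(s)\to 0$, which carries the Ricci-flat Stenzel metric capping the Calabi--Yau cone over $S^2\times S^3$ by a round $S^3$ of diameter $O(|\lambda(s)|^{1/3})$ and volume $O(|\lambda(s)|)$. Using the a priori estimates of Section 4 --- non-collapsing, volume-ratio and gradient bounds for the potential, together with the $L^\infty$ bound above --- I would show that on a definite punctured neighborhood of each node the Ricci-flat metric $g_{Y_s}$ is uniformly equivalent to this local model, so the ``neck'' of $(Y_s,g_{Y_s})$ attached to each node has diameter and volume tending to $0$, while $\mathrm{diam}(Y_s,g_{Y_s})$ stays bounded.

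Finally, combining smooth convergence on the bulk with the collapse of the necks and the uniform diameter bound, I would construct $\varepsilon_s$-Gromov--Hausdorff approximations between $(Y_s,g_{Y_s})$ and $(Y,d_Y)$ with $\varepsilon_s\to 0$, by transporting the bulk via the trivialization and collapsing each neck to the corresponding node, the target being $(Y,d_Y)$ by Theorem \ref{main1}; this yields $(Y_s,g_{Y_s})\to(Y,d_Y)$. The hard part is precisely the neck analysis of the third step: ruling out curvature or volume concentration along the vanishing $S^3$'s so that the necks genuinely shrink to the nodes. This is the smoothing-side counterpart of the degeneration estimates of Section 4 and is where the bulk of the effort goes; the remaining bulk convergence and the assembly of the Gromov--Hausdorff approximations are then routine.
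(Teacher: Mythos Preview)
Your contraction side is exactly right and matches the paper. The divergence is on the smoothing side: you propose to carry out from scratch the analysis of $(Y_s,g_{Y_s})$ --- uniform $L^\infty$ bounds, bulk $C^\infty$ convergence, and a ``neck analysis'' near the vanishing $S^3$'s using the Stenzel model --- whereas the paper does none of this. In the paper the entire smoothing side is already packaged in Theorem~\ref{RZ} (Rong--Zhang): for any geometric transition, $(Y_s,g_{Y_s})$ converges in Gromov--Hausdorff sense to the metric completion $(Y',d_{Y'})$ of $(Y_{reg},g_Y)$. The paper's proof of Corollary~\ref{main3} is then one line: Theorem~\ref{main1} identifies $(Y',d_{Y'})$ with $(Y,d_Y)$, and Theorem~\ref{RZ} supplies both convergences to that common limit.

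So your route is not wrong in spirit --- it is essentially a sketch of what \cite{RZ} does on the smoothing side --- but it is far more work than needed here, and one step is misattributed. The estimates of Section~4 are tailored to the \emph{resolution} geometry: the exceptional locus is a $\PP^1$ with normal bundle $\OO(-1)\oplus\OO(-1)$, and the argument hinges on the meromorphic slicing by the surfaces $L_w$ coming from the flop. None of that structure is present on the smoothing side, where the vanishing cycle is a Lagrangian $S^3$ and the local model is the Stenzel metric, not the Calabi ansatz on $\OO(-1)^{\oplus 2}$. So ``using the a priori estimates of Section~4'' for the neck control on $Y_s$ does not go through as stated; that neck analysis is precisely the content of \cite{RZ} (and \cite{RuZ}), which you should simply cite, as the paper does.
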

Corollary \ref{main3} proves a conjecture of Candelas and de la Ossa (Conjecture \ref{conj}) for conifold transitions by combining Theorem \ref{main1} and the results of Rong and Zhang \cite{RZ} (Theorem \ref{RZ}). If there exists an algebraic conifold transition between  Calabi-Yau threefolds of distinct topology, it can be also constructed as a continuous transition for algebraic Calabi-Yau varieties coupled with canonical Ricci-flat K\"ahler metrics in the Gromov-Hausdorff "moduli space". The convergence in Corollary \ref{main2} and Corollary \ref{main3} is stronger than Gromov-Hausdorff convergence and in fact, it is in local $C^\infty$-topology outside the exceptional locus as shown in \cite{To1}.

The organization of this article is as follows: In section 2, we give the background of conifold transitions, singular Ricci-flat K\"ahler metrics and complex Monge-Amp\`ere equations. In section 3, we review the Calabi symmetry and construct various local ansatz near an exceptional rational curve. In section 4, we obtain various uniform estimates for a degenerate family of Ricci-flat K\"ahler metrics. Finally, we prove the main results in section 5 with some discussion on generalizations to higher dimensional conifold transitions and to canonical surgery by the K\"ahler-Ricci flow.


\bigskip

\section{\bf Conifold transitions and complex Monge-Amp\`ere equations}\label{section2}

In this section, we give a brief introduction on conifold transitions and canonical Ricci-flat K\"ahler metrics on singular Calabi-Yau varieties..

\begin{definition} Let $X$ and $X'$ be two smooth Calabi-Yau threefolds with $f: X \rightarrow Y$ and $f' : X' \rightarrow Y$ being small contraction morphisms. Then the following diagram is called a flop between $X$ and $X'$

\begin{equation}
\begin{diagram}\label{flop}
\node{X} \arrow{se,b,}{f} \arrow[2]{e,t,..}{ }     \node[2]{X'} \arrow{sw,r}{f'} \\
\node[2]{Y}
\end{diagram}
\end{equation}

\end{definition}

$Y$ in (\ref{flop}) is a normal variety with canonical singularities and trivial canonical divisor. There  exists a unique Ricci-flat  K\"ahler metric in any given polarization by the work of Essydieux, Guedj and Zeriahi \cite{EGZ}.  This is shown by  solving degenerate complex Monge-Amp\`ere equations related to constant scalar curvature metrics (cf. \cite{EGZ, Z, Kol2, PhS, PhS2, PSS}) . Let $\mathcal{L}_0$ be an ample line bundle over $Y$ and so it induces an embedding morphism $Y \hookrightarrow \PP^N$  into some big projective space.  Let $\alpha$ be a K\"ahler class on $X$ and we define $\alpha_t= \alpha+ t [f^* \mathcal{L}_0]$. Obviously $\alpha_t$ is a K\"ahler class on $X$ whenever $t>0$.  Let $\theta\in [\mathcal{L}_0]$ be a multiple of the pullback of the Fubini-Study metric on $\PP^N$,  $\omega_0\in [\alpha]$ a K\"ahler metric on $X$ and $\omega_t = \theta + t\omega_0$.  Let $\Omega_{CY}$ be a smooth volume form on $X$ such that $\ddbar \log \Omega_{CY} =0$.
Then the solution of the following complex Monge-Amp\`ere equation
\begin{equation}\label{maeqn}
(\omega_t + \ddbar \varphi_t)^3 = c(t) \Omega_{CY},  ~~~  \sup_{X} \varphi_t = 0
\end{equation}
 gives rise to a Ricci flat K\"ahler metric $g(t)$ associated to the K\"ahler form
\begin{equation}
\omega(t) = \omega_t + \ddbar \varphi_t,
\end{equation}
where $c(t)$ is a family of constants in $t$ determined by $\alpha_t ^3 = c(t) \int_X \Omega_{CY}$.

The following deep estimates are obtained in \cite{EGZ, Z} built on techniques of Kolodziej \cite{Kol1}.
\begin{theorem}\label{EGZ1} There exists $C>0$ such that for all $t\in (0, 1]$,
\begin{equation}
||\varphi_t ||_{L^\infty(X)} \leq C.
\end{equation}

\end{theorem}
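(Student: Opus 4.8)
The plan is to establish the uniform $L^\infty$ bound on $\varphi_t$ by Kolodziej-type pluripotential methods, taking advantage of the fact that the volume forms $c(t)\,\Omega_{CY}$ stay in a suitably controlled family. First I would normalize: since $\alpha_t^3 = \alpha^3 + 3t\,\alpha^2\cdot[f^*\mathcal L_0] + 3t^2\,\alpha\cdot[f^*\mathcal L_0]^2 + t^3\,[f^*\mathcal L_0]^3$ and $[f^*\mathcal L_0]^3 = [\mathcal L_0]^3 > 0$ (projection formula, $\mathcal L_0$ ample on $Y$), the constants $c(t)$ satisfy $0 < c_0 \le c(t) \le c_1$ for $t \in (0,1]$, so the right-hand sides $c(t)\,\Omega_{CY}$ are all comparable to the fixed smooth volume form $\Omega_{CY}$. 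In particular there is a uniform constant $A$ and exponent $p > 1$ (any $p$ works, since the density is bounded) with $\|c(t)\,\Omega_{CY}/\omega_0^3\|_{L^p(\omega_0^3)} \le A$ for all $t$.

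Next I would invoke Kolodziej's stability/a priori estimate in the form used in \cite{EGZ, Z}: for a fixed K\"ahler reference form (here I would use $\omega_1 = \theta + \omega_0$, or simply bound everything against $\omega_0$ after noting $\omega_t \le \omega_1$ for $t\le 1$), any solution $\phi$ of $(\omega + \ddbar\phi)^n = F\,\omega_{\mathrm{ref}}^n$ with $\|F\|_{L^p} \le A$ satisfies $\mathrm{osc}_X\,\phi \le C(A,p,\omega_{\mathrm{ref}})$. The subtlety is that $\omega_t$ degenerates as $t\to 0$ — it converges to $\theta = f^*(\text{a K\"ahler form on }Y)$, which is only semipositive and big on $X$ (it vanishes along the exceptional curves). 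So the estimate cannot be applied with $\omega_t$ as reference uniformly; instead I would write the equation relative to the fixed big and nef class $[\theta]$ together with the fixed K\"ahler class $[\alpha]$, using that $[\theta] + [\alpha] = [\omega_1]$ is K\"ahler and that $\theta$ has bounded local potentials. Concretely: set $\psi_t = \varphi_t$ and observe $\omega_1 + \ddbar(\varphi_t - t\cdot(\text{potential adjustment})) \ge 0$ is not quite right, so more carefully one runs Kolodziej's argument with the comparison/capacity estimates for the pair $(X,[\omega_1])$, treating $(1-t)\theta$ as an extra semipositive term absorbed into the measure bound. Since $\theta\ge 0$, we have $\omega(t) = t\omega_0 + (1-t)\cdot 0 + \theta + \ddbar\varphi_t \ge 0$ and also $\omega(t) \le \omega_1 + \ddbar\varphi_t$ only after care; the clean route is: $\omega_1 + \ddbar\varphi_t = \omega(t) + (1-t)\theta \ge 0$ pointwise wherever we need it, hence $\varphi_t$ is $\omega_1$-plurisubharmonic, and $(\omega_1 + \ddbar\varphi_t)^3 \ge (\omega(t))^3 = c(t)\Omega_{CY}$ gives a lower volume bound, while the upper bound on $\varphi_t$ is the normalization $\sup_X\varphi_t = 0$. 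The lower bound $\varphi_t \ge -C$ then follows from Kolodziej's $L^\infty$ estimate applied in the fixed K\"ahler class $[\omega_1]$ to the measure $(\omega_1+\ddbar\varphi_t)^3$, once one checks this measure has uniformly $L^p$ density — which holds because $(\omega_1+\ddbar\varphi_t)^3 = c(t)\Omega_{CY} + 3(1-t)\theta\wedge\omega(t)^2 + \dots$ and each term is a positive measure of total mass bounded by $\omega_1^3$, and by the $L^\infty$-to-$L^p$ bootstrap for Monge-Amp\`ere measures of $\omega_1$-psh functions (or by quoting the relevant lemma of \cite{EGZ}).

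The main obstacle is exactly the degeneration of the reference form $\omega_t \to \theta$ as $t\to 0$: one must ensure the constant $C$ does not blow up along the exceptional curves where $\theta$ loses positivity. I expect to handle this by working throughout in the fixed ambient K\"ahler class $[\omega_1]=[\alpha]+[f^*\mathcal L_0]$ (which does not degenerate) and by controlling the family of Monge-Amp\`ere measures $(\omega_1+\ddbar\varphi_t)^3$ uniformly in $L^p$, as sketched above; this reduces everything to a single application of Kolodziej's estimate with $t$-independent data. I would then remark that this is precisely the uniform bound proved in \cite{EGZ, Z}, so in the paper itself one simply cites those references — the estimate is stated here as Theorem \ref{EGZ1} to be used as a black box in the sequel, and the above is the standard pluripotential argument underlying it.
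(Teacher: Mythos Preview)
The paper gives no proof of this theorem: it is stated as a quotation from \cite{EGZ, Z} (built on Kolodziej's techniques \cite{Kol1}) and used as a black box thereafter. You correctly anticipate this in your final paragraph.

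Your sketch of the underlying argument is in the right spirit but contains a minor slip and a genuine gap. The slip: since $\omega_t = \theta + t\omega_0$, one has $\omega_1 + \ddbar\varphi_t = \omega(t) + (1-t)\omega_0$, not $\omega(t) + (1-t)\theta$ as you write; the conclusion that $\varphi_t$ is $\omega_1$-psh is unaffected. The gap: your plan to bound $(\omega_1 + \ddbar\varphi_t)^3$ in $L^p$ uniformly cannot be carried out as written, because after expanding, the cross terms $\omega(t)^2 \wedge \omega_0$ and $\omega(t) \wedge \omega_0^2$ involve the unknown metric $\omega(t)$ and are not a priori controlled pointwise or in $L^p$ --- knowing only their total mass (a cohomological quantity) is insufficient for Kolodziej's estimate, and there is no ``$L^\infty$-to-$L^p$ bootstrap'' of the kind you invoke without already knowing $\varphi_t$ is bounded. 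This circularity is precisely why the results of \cite{EGZ, Z} are nontrivial: they extend Kolodziej's $L^\infty$ bound to the setting of a \emph{degenerate} (big and semipositive) reference form directly, via capacity estimates adapted to that class, rather than by reducing to a fixed K\"ahler reference as you attempt. Since the paper is content to cite the result, none of this affects the validity of invoking Theorem~\ref{EGZ1} here.
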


The local $C^\infty$ regularity follows from the $L^\infty$ estimate, Tsuji's trick \cite{Ts} and the general linear theory (cf. \cite{EGZ, ST1, To1}).
\begin{proposition}\label{EGZ2} Let $D$ be the exceptional locus of $f: X \rightarrow Y$ and $X^{\circ} = X\setminus D$. For any compact set $K$ of $X^{\circ} $ and $k \geq 0$, there exists $C_{K, k}>0$ such that for all $t\in (0, 1]$,
\begin{equation}
||\varphi_t ||_{C^k(K)} \leq C_{K, k}.
\end{equation}

\end{proposition}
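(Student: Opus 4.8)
The plan is to use Tsuji's trick: promote the uniform $L^{\infty}$ bound of Theorem~\ref{EGZ1} to a uniform interior $C^{2}$ estimate on compact subsets of $X^{\circ}$ by a localized maximum principle, and then bootstrap to all orders with the linear elliptic theory. The one geometric fact that keeps everything uniform in $t$ is that $f$ is a biholomorphism off its exceptional locus: on $X^{\circ}$ the semipositive form $\theta=f^{*}\omega_{FS}$ is a genuine K\"ahler metric, so for any pair of compact sets $K\subset K'\subset X^{\circ}$ with $K$ in the interior of $K'$ there is $\lambda=\lambda_{K'}>0$ with $\theta\ge\lambda\,\omega_{0}$, hence $\omega_{t}=\theta+t\omega_{0}\ge\lambda\,\omega_{0}$ on $K'$ for all $t\in(0,1]$. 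Also $c(t)$, determined by $\alpha_{t}^{3}=c(t)\int_{X}\Omega_{CY}$, is continuous and strictly positive on $[0,1]$, so $0<c_{1}\le c(t)\le c_{2}$, and $\Omega_{CY}/\omega_{0}^{3}$ is bounded above and below by positive constants on $K'$. Note that for each fixed $t>0$ the equation \eqref{maeqn} is a non-degenerate complex Monge-Amp\`ere equation, so $\varphi_{t}$ is already smooth; the content of the statement is the uniformity of the estimates as $t\to0$.

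For the $C^{2}$ estimate I would fix a cutoff $\rho$, equal to $1$ on $K$ and supported in the interior of $K'$, and apply the maximum principle on $X$ to $G=\rho^{2}\log\tr_{\omega_{0}}\omega(t)-A\varphi_{t}$ (or a similar combination), for a large constant $A$. Since $\omega(t)$ is Ricci-flat and the bisectional curvature of the fixed metric $\omega_{0}$ is bounded below, Yau's second-order inequality gives $\Delta_{\omega(t)}\log\tr_{\omega_{0}}\omega(t)\ge-C_{0}\,\tr_{\omega(t)}\omega_{0}$ with $C_{0}$ depending only on $\omega_{0}$, while $\Delta_{\omega(t)}\varphi_{t}=3-\tr_{\omega(t)}\omega_{t}\le 3-\lambda\,\tr_{\omega(t)}\omega_{0}$ on $K'$. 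At a maximum point of $G$: if $\rho$ vanishes there, then $G\le A\|\varphi_{t}\|_{L^{\infty}}$ is already bounded; otherwise, choosing $A$ with $A\lambda>C_{0}$ and absorbing the gradient terms produced by $\rho$ in the standard way (using $\nabla G=0$ and the refined form of Yau's inequality) forces a uniform bound on $\tr_{\omega(t)}\omega_{0}$ there, and then, since \eqref{maeqn} controls the product of the eigenvalues of $\omega(t)$ relative to $\omega_{0}$ from above and below, a uniform bound on $\tr_{\omega_{0}}\omega(t)$ at that point as well. In all cases $G\le C$ on $X$, so with $\|\varphi_{t}\|_{L^{\infty}}\le C$ one gets $\tr_{\omega_{0}}\omega(t)\le C_{1}$ on $K$, and then, using again that the product of the eigenvalues of $\omega(t)$ is bounded below, $c_{3}\,\omega_{0}\le\omega(t)\le C_{1}\,\omega_{0}$ on $K$ — a uniform $C^{2}$ bound for $\varphi_{t}$, with all constants independent of $t$.

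Once the two-sided bound holds, the complex Monge-Amp\`ere equation for $\varphi_{t}$ is uniformly elliptic on $K$ with reference data uniformly bounded in $C^{\infty}(K')$ (because $\omega_{t}\to\theta$ in $C^{\infty}(K')$ and $\Omega_{CY}$ is a fixed smooth volume form), so the complex Evans--Krylov estimate (or Calabi's third-order estimate) gives a uniform $C^{2,\beta}(K)$ bound; differentiating the equation and iterating the interior Schauder estimates over a nested sequence $K\subset K'\subset K''\subset X^{\circ}$ then yields $\|\varphi_{t}\|_{C^{k}(K)}\le C_{K,k}$ for every $k$, uniformly in $t\in(0,1]$, as claimed. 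The hard part is keeping every constant independent of $t$ in the maximum-principle step: this works precisely because the degeneration $\omega_{t}\to\theta$ as $t\to0$ is compensated, on any compact subset of $X^{\circ}$, by the fixed inequality $\theta\ge\lambda_{K'}\omega_{0}$; as $K'$ is pushed toward the exceptional locus $D$ the constant $\lambda_{K'}$ degenerates, which is why the estimate is necessarily only interior to $X^{\circ}$, as the statement reflects.
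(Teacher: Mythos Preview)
Your proposal is correct and follows precisely the strategy the paper indicates in its one-line proof sketch (``follows from the $L^\infty$ estimate, Tsuji's trick, and the general linear theory''): promote the uniform $L^\infty$ bound to a local $C^2$ bound by a maximum-principle argument with the auxiliary term $-A\varphi_t$, then bootstrap via Evans--Krylov and Schauder.

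One remark on implementation: your $C^2$ step localizes the Aubin--Yau inequality with a cutoff $\rho$, which works but forces you to absorb cross terms using the third-order ``good term'' in Yau's computation. The route taken in the cited references \cite{ST1, To1} (and by the paper itself in Step~1 of Proposition~\ref{keyest}) is cleaner: apply the Chern--Lu/Schwarz inequality globally to the holomorphic map $f:(X,\omega(t))\to\PP^N$, obtaining $\Delta_t\log\tr_{\omega(t)}\theta\ge -B\,\tr_{\omega(t)}\theta$ since $\omega(t)$ is Ricci-flat and the Fubini--Study target has bounded bisectional curvature; then the maximum principle applied to $\log\tr_{\omega(t)}\theta - A\varphi_t$ on all of $X$ (no cutoff needed, because $\tr_{\omega(t)}\omega_t\ge\tr_{\omega(t)}\theta$ everywhere) yields $\omega(t)\ge c\,\theta$ globally. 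On $K\subset X^\circ$ this gives $\omega(t)\ge c\lambda_K\omega_0$, and the volume bound $\omega(t)^3\le C\omega_0^3$ then forces the upper bound $\omega(t)\le C_K\omega_0$. Either route gives the same conclusion; the global Schwarz-lemma version simply sidesteps the cutoff bookkeeping.
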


From the above uniform estimates, we obtain a unique solution for the limiting degenerate complex Monge-Amp\`ere equation \cite{EGZ, To1}. It is shown in \cite{To1, ZY} that the diameter of $(X, g(t))$ is uniformly bounded for $t\in (0,1]$. The following corollary follows from Theorem \ref{EGZ1} and Proposition \ref{EGZ2} by letting $t\rightarrow 0$.

\begin{corollary}\label{EGZ3}  Let $Y_{reg}$ be the nonsingular part of $Y$. There exists a unique $\varphi_0 \in PSH(Y, \theta) \cap L^\infty(Y) \cap C^\infty (Y_{reg})$ such that $\sup_X \varphi_0 = 0$ and
\begin{equation}
(\theta + \ddbar \varphi_0)^3 = c_\theta \Omega_{CY}, ~~ \int_X \theta^3 = c_\theta \int_X \Omega_{CY}.
\end{equation}

\end{corollary}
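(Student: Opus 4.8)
The plan is to obtain $\varphi_0$ as a subsequential limit of the solutions $\varphi_t$ of (\ref{maeqn}) as $t\to 0$ and to check that it inherits every required property from the uniform estimates of Theorem \ref{EGZ1} and Proposition \ref{EGZ2}. Fix a sequence $t_j\to 0$. Each $\varphi_{t_j}$ is $\omega_{t_j}$-plurisubharmonic, hence, since $\omega_{t_j}=\theta+t_j\omega_0\le\theta+\omega_0$ for $t_j\le 1$ and $\omega_0>0$, also $(\theta+\omega_0)$-plurisubharmonic; by Theorem \ref{EGZ1} the family is bounded in $L^\infty(X)$, so by compactness of uniformly bounded $(\theta+\omega_0)$-plurisubharmonic functions in $L^1(X)$ I may pass to a subsequence with $\varphi_{t_j}\to\varphi_0$ in $L^1(X)$ and $\|\varphi_0\|_{L^\infty(X)}\le C$. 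By Proposition \ref{EGZ2}, on each compact $K\subset X^{\circ}$ this sequence is bounded in every $C^k(K)$, so after a diagonal argument the convergence is in fact in $C^\infty_{loc}(X^{\circ})$; in particular $\varphi_0\in C^\infty(X^{\circ})$, and since $f$ restricts to a biholomorphism $X^{\circ}\to Y_{reg}$ this gives $\varphi_0\in C^\infty(Y_{reg})$. Passing to the limit in $\theta+t_j\omega_0+\ddbar\varphi_{t_j}\ge 0$ on $X^{\circ}$ shows $\theta+\ddbar\varphi_0\ge 0$ there; as the exceptional set $D$ has complex codimension two and $\varphi_0$ is bounded, the Riemann-type extension of bounded plurisubharmonic functions across analytic subsets promotes $\varphi_0$ to an element of $PSH(X,\theta)\cap L^\infty(X)$, which descends through $f$ to $PSH(Y,\theta)\cap L^\infty(Y)$ (the isolated singular points of $Y$ being likewise removable). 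The constant $c_\theta:=\lim_{t\to 0}c(t)$ is read off by letting $t\to 0$ in $\alpha_t^3=c(t)\int_X\Omega_{CY}$, using $\alpha_t^3\to\int_X\theta^3$, so that $\int_X\theta^3=c_\theta\int_X\Omega_{CY}$.

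Next I would pass to the limit in the Monge-Amp\`ere equation. On $X^{\circ}$ the $C^\infty_{loc}$ convergence together with $c(t_j)\to c_\theta$ gives $(\theta+\ddbar\varphi_0)^3=c_\theta\Omega_{CY}$ pointwise there. To upgrade this to a global identity of positive measures on $X$ (equivalently on $Y$) in the sense of Bedford-Taylor, I would use a mass argument rather than a convergence-of-measures argument: for a bounded $\theta$-plurisubharmonic potential the Bedford-Taylor measure $(\theta+\ddbar\varphi_0)^3$ is well defined with total mass $\int_X\theta^3$ (cf. \cite{EGZ}), while its restriction to $X^{\circ}$ already carries mass $\int_{X^{\circ}}c_\theta\Omega_{CY}=c_\theta\int_X\Omega_{CY}=\int_X\theta^3$ because $D$ is Lebesgue-null for the smooth form $\Omega_{CY}$; hence $(\theta+\ddbar\varphi_0)^3$ puts no mass on $D$ and coincides with $c_\theta\Omega_{CY}$ on all of $X$. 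The normalization $\sup_X\varphi_0=0$ is inherited: $\varphi_{t_j}\le 0$ with $L^1$ convergence forces $\varphi_0\le 0$, and the reverse inequality follows from the convergence (in fact $\varphi_{t_j}\to\varphi_0$ uniformly on $X$ by Kolodziej's stability theorem, cf. \cite{Kol1, EGZ, To1}, which makes $\sup_X\varphi_0=0$ immediate).

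For uniqueness, if $\varphi_0,\varphi_0'\in PSH(Y,\theta)\cap L^\infty(Y)$ both solve the equation with $\sup_X\varphi_0=\sup_X\varphi_0'=0$, I would apply the Bedford-Taylor comparison principle (as used by Kolodziej and in \cite{EGZ}) to the sets $\{\varphi_0<\varphi_0'-\ep\}$ to conclude that they carry no Monge-Amp\`ere mass for every $\ep>0$, which together with the common normalization forces $\varphi_0=\varphi_0'$; this also shows that the whole family $\varphi_t$, not merely a subsequence, converges to $\varphi_0$. The one genuinely delicate point in the argument is that no information is lost across the exceptional (resp. singular) locus $D$: both the plurisubharmonicity and the Monge-Amp\`ere identity must survive the passage from $X^{\circ}$ to $X$ and then to $Y$, and this is exactly where the codimension-two smallness of $D$, the boundedness of the potential, and the Bedford-Taylor pluripotential theory (extension theorems and the total-mass identity) are used essentially; the remaining steps are routine consequences of Theorem \ref{EGZ1} and Proposition \ref{EGZ2}.
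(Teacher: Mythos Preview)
Your argument is correct and is precisely the approach the paper has in mind: the paper gives no detailed proof but simply states that the corollary ``follows from Theorem \ref{EGZ1} and Proposition \ref{EGZ2} by letting $t\rightarrow 0$'' (with a reference to \cite{EGZ, To1}), and you have faithfully unpacked that limit argument. The only minor remark is that invoking Ko{\l}odziej's stability theorem for the normalization $\sup_X\varphi_0=0$ is heavier than necessary---Hartogs' lemma for quasi-plurisubharmonic functions already gives $\limsup_j\sup_X\varphi_{t_j}\le\sup_X\varphi_0$, which combined with $\varphi_0\le 0$ yields the claim---but this does not affect correctness.
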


The K\"ahler current
\begin{equation}
\omega_Y = \theta + \ddbar \varphi_0
\end{equation}
induces the unique singular Ricci-flat K\"ahler metric on $Y$ in $c_1(\mathcal{L}_0)$.

By the general theory in Riemannian geometry \cite{C, CC1, CC2, CCT}, one can always take the Gromov-Hausdorff limit for the family of $(X, g(t))$ with $t\in (0, 1]$. On the other hand, $g(t)$ converges to $g_Y$ in $C^\infty(X^\circ)$. Naturally, one would ask if the intrinsic limit of $(X, g(t))$ in Gromov-Hausdorff topology is homeomorphic to $Y$ as a projective variety,  and if it coincides with its extrinsic limit.

We now introduce the notion of geometric transitions for Calabi-Yau threefolds (cf. \cite{Ro, RuZ, RZ}).

\begin{definition} \label{defgt} Let $X$ be a Calabi-Yau threefold and $f: X \rightarrow Y$  be a contraction morphism from $X$ to a normal Calabi-Yau variety $Y$. Suppose that $Y$ admits a smooth projective deformation $\pi: \mathcal{M} \rightarrow \Delta$  over the unit disc $\Delta \in \CC$ such that $K_{\mathcal{M}/\Delta} = \OO_{\mathcal{M}}$ with smooth fibres $Y_s=\pi^{-1} (s)$  of Calabi-Yau three folds for $s \neq 0$ and $Y=Y_0$.

Then the following diagram is called a geometric transition $T(X, Y, Y_s)$ %
\begin{equation}
\begin{diagram}\label{geotran2}
\node{X}    \arrow[2]{e,t}{f }     \node[2]{Y } \node[2]{Y_s} \arrow[2]{w}\\
\end{diagram}.
\end{equation}
\end{definition}

\begin{definition}\label{defct} A geometric transition $T(X, Y, Y_s)$ is called a conifold transition if $Y$  admits only ordinary double points as singularity.

\end{definition}

\begin{definition}\label{defcflop} A flop between two Calabi-Yau threefolds $X$ and $X'$ as in (\ref{flop})is called a conifold flop if $Y$  admits only ordinary double points as singularity.

\end{definition}

A local model for conifold singularities is given by $$ \{ z\in \CC^4~|~z_1^2 + z_2^2 + z_3^2 + z_4^2 =0 \}.$$
A well-known example for a conifold transition is given in \cite{GMS}. Let $Y$ be the hypersurface in $\PP^4$ defined by $$z_3 g(z_0, ..., z_4)+ z_4 h(z_0, ..., z_4)=0$$ with generic homogeneous polynomials $g, h$ of degree $4$ in $[z_0, z_1, ..., z_4] \in \PP^4$. The singular locus of $Y$ is given by $\{ z_3=z_4=g(z)=h(z)=0\} $, which consists of $16$ ordinary double points. The small resolution of the singularities of $Y$ gives rise to a smooth Calabi-Yau threefold $X$ and  $Y$ can also be smoothed to generic smooth quintic threefolds in $\PP^4$.

Let $T(X, Y, Y_s)$ be a geometric  transition associated to a smoothing $\mathcal{M} \rightarrow \Delta$.  Let $\mathcal{L}$ be an ample line bundle over $\mathcal{M}$ and $\mathcal{L}_s = \mathcal{L}|_{Y_s}$ . Then there exists a unique smooth Calabi-Yau K\"ahler metric $g_{Y_s} \in c_1 (\mathcal{L}_s)$ for $s \in \Delta^*$. When $s=0$, there exists a unique singular Ricci-flat K\"aher metric  $g_Y$ by Theorem \ref{EGZ3} such that the associated  K\"ahler current $\omega_Y \in c_1(\mathcal{L}_0)$  has bounded local potential and $\omega_Y^3$ is a Calabi-Yau volume form on $Y$. In fact, $g_Y$ is smooth on $Y_{reg}$, the nonsingular part of $Y$.

Let $\alpha$ be a K\"ahler class of $X$. Then $\alpha_t = \alpha + t [\mathcal{L}_0]$ is a K\"ahler class of $X$ for $t\in (0, 1]$ and there exists a unique smooth Calabi-Yau K\"ahler metric $g(t) \in c_1 (\alpha_t)$.  The following is a natural mathematical formulation for a conjecture of Candelas and de la Ossa  \cite{RZ}.

\begin{conjecture}\label{conj} Let $T(X, Y, Y_s)$ be a conifold transition. The metric completion of $(Y_{reg}, g_Y)$ is a compact length metric space homeomorphic to $Y$ as a projective variety. If we denote such a metric space by $(Y, d_Y)$, then $(X, g(t))$ and $(Y_s, g_{Y_s} )$ converge to $(Y, d_Y)$ in Gromov-Hausdorff topology as $t, s \rightarrow 0$
\begin{equation}
\begin{diagram}\label{diagcand}
\node{(X, g(t) )}    \arrow[2]{e,t,b}{d_{GH} }    \node[2]{ (Y, d_Y)  } \node[2]{(Y_s, g_{Y_s} )} \arrow[2]{w,t}{d_{GH}}
\end{diagram}, ~~ ~ ~ t, s \rightarrow 0.
\end{equation}

\end{conjecture}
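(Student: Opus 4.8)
For conifold transitions this is the content of Corollary~\ref{main3}, and I would derive it by splitting the statement into two parts: (i) the metric completion of $(Y_{reg}, g_Y)$ is a compact length space homeomorphic to the projective variety $Y$, together with $(X,g(t))\xrightarrow{GH}(Y,d_Y)$ as $t\to 0$; and (ii) $(Y_s, g_{Y_s})\xrightarrow{GH}(Y,d_Y)$ as $s\to 0$. Part (i) is exactly Theorem~\ref{main1}, and once it is known, part (ii) follows from the Gromov--Hausdorff convergence of the smoothings $(Y_s,g_{Y_s})$ to the metric completion of $(Y_{reg},g_Y)$ proved by Rong and Zhang \cite{RZ}, since (i) identifies that completion with $Y$. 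So the real work is Theorem~\ref{main1}.

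For Theorem~\ref{main1}, the global analytic inputs are already available: Theorem~\ref{EGZ1} and Proposition~\ref{EGZ2} give a uniform $L^\infty$ bound on $\varphi_t$ and uniform $C^k_{loc}$ bounds over compact subsets of $X^\circ$, so $g(t)\to g_Y$ in $C^\infty_{loc}(X^\circ)$, and $\operatorname{diam}(X,g(t))$ is uniformly bounded in $t$. The missing ingredient is uniform control of $g(t)$ near the exceptional locus $D=f^{-1}(\{\text{ODPs}\})$, a disjoint union of rational curves $C_i\cong\PP^1$. I would obtain it from explicit local models: near each $C_i$, $X$ is biholomorphic to a neighborhood of the zero section of the small resolution $\OO_{\PP^1}(-1)^{\oplus 2}$ of the conifold, on which the Calabi symmetry (Section~3) produces an $SU(2)\times SU(2)$--invariant Candelas--de la Ossa type K\"ahler ansatz $\homega_\epsilon$ that is Ricci--flat up to a controlled error, in which $C_i$ has area of order $\epsilon^2$, and which equals a fixed background metric outside a tube of fixed radius around $C_i$; the cohomological normalization forces $\epsilon(t)^2\sim t$, hence $\epsilon(t)\to 0$.

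The technical heart is the comparison estimate $C^{-1}\homega_{\epsilon(t)}\le\omega(t)\le C\homega_{\epsilon(t)}$ on such a tube, for $C$ independent of $t$. I would prove it by a maximum-principle/barrier argument: plug the ansatz into the Monge--Amp\`ere equation~(\ref{maeqn}), use $\homega_{\epsilon(t)}$ (perturbed by an auxiliary potential) as a two--sided barrier, and close the argument on the boundary sphere of the tube using the $C^\infty_{loc}$ convergence of $g(t)$ to $g_Y$ there. This is the step I expect to be the main obstacle, because a priori one controls $\varphi_t$ only in $L^\infty$, with no uniform derivative bounds up to $D$, so the estimate must exploit the rigidity of the conifold (the Calabi--Yau cone over $T^{1,1}$ and its small resolution) to build sharp barriers, and must track the precise coupling $\epsilon(t)^2\sim t$.

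Granting the comparison estimate, the $g(t)$-metric on the tube $f^{-1}(\overline{B^{g_Y}_\delta(p)})$ over a small $g_Y$-ball around an ODP $p$ is uniformly comparable to a truncated rescaled resolved-conifold metric, whose diameter is $O(\delta+\epsilon(t))$; hence for every $\sigma>0$ there are $\delta>0$ and $t_0>0$ with $\operatorname{diam}_{g(t)}\!\big(f^{-1}(\overline{B^{g_Y}_\delta(p)})\big)<\sigma$ for all ODPs $p$ and all $t<t_0$. Combining this with the $C^\infty_{loc}$ convergence on the compact complement $X\setminus\bigcup_p f^{-1}(B^{g_Y}_\delta(p))\subset X^\circ$, a standard Gromov--Hausdorff argument shows that $(X,g(t))$ converges to the metric completion $\widehat Y$ of $(Y_{reg},g_Y)$, that $\widehat Y$ is compact and a length space, and that $\widehat Y$ is obtained from $Y_{reg}$ by adding exactly one point per ODP --- each such point a limit of shrinking tubes glued to the bulk --- so the identity on $Y_{reg}$ extends to a homeomorphism $\widehat Y\to Y$. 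This gives Theorem~\ref{main1} with $(Y,d_Y)=\widehat Y$, and hence, with \cite{RZ}, Conjecture~\ref{conj}.
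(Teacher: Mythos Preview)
Your overall reduction is exactly the paper's: Conjecture~\ref{conj} $=$ Corollary~\ref{main3} $=$ Theorem~\ref{main1} $+$ Theorem~\ref{RZ}, and the global inputs you list (Theorem~\ref{EGZ1}, Proposition~\ref{EGZ2}, uniform diameter) are the right ones. The divergence is in the local estimate near the exceptional curves, and here your proposal does not match the paper and, as written, has a gap.

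You aim for a uniform two–sided comparison $C^{-1}\homega_{\epsilon(t)}\le\omega(t)\le C\homega_{\epsilon(t)}$ with a Candelas--de~la~Ossa model metric on the resolved conifold, to be obtained by a barrier/maximum–principle argument. The paper does \emph{not} prove such an estimate, and in fact flags precisely this route as problematic: at the beginning of Section~\ref{section4} it notes that the usual second–order estimate fails because no good reference metric with admissible curvature is available (certain curvature components of $\hat\omega$ tend to $-\infty$ near $P_0$), and that the underlying reason is that the degenerate locus has codimension $2$, not $1$. Your sketch (``plug the ansatz into~(\ref{maeqn}) and use $\homega_{\epsilon(t)}$ as a two–sided barrier'') does not explain how to overcome either obstruction; in particular, a Chern--Lu/Schwarz type upper bound for $\omega(t)$ against a model requires a lower bisectional curvature bound for the model \emph{and} a mechanism to absorb the boundary/interior terms uniformly in $t$, and no such mechanism is given.

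What the paper does instead is the novel part of the argument. It exploits the flop structure to foliate $\Omega$ by a meromorphic family of surfaces $L_w\simeq\OO_{\PP^1}(-1)$ (Section~\ref{section3}), and takes a \emph{partial} trace $H=\tr_{\tau|_{L_w}}(\omega(t)|_{L_w})$ along these slices against the flat form $\tau$. Proposition~\ref{H2} shows $\Delta_t\log H\ge 0$, which together with the subharmonicity of $\log|V_1|^2_{g(t)}$ for the holomorphic vector field $V_1$ and the maximum principle yields Proposition~\ref{keyest}: a uniform lower bound $\omega(t)\ge C^{-1}\omega_{\hat E}$, a \emph{non–uniform} upper bound $\omega(t)\le Ce^{-\rho}\omega_{\hat E}$, and crucially a radial estimate $|W|^2_{g(t)}\le Ce^{-\rho/2}$. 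The latter controls the $g(t)$–length of radial paths from any point of $\Omega$ to $P_0$, while Proposition~\ref{diam2} gives $\mathrm{diam}(P_0,g(t)|_{P_0})\le Ct^{1/3}$ from the cohomological constraint $\int_{P_0}\omega(t)=t\,P_0\cdot[\alpha]$. These two facts combine to shrink the tubes and run the Gromov--Hausdorff argument, without ever establishing the strong two–sided equivalence you posit.

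In short: your strategy after the estimate is fine, but the estimate you propose is stronger than what the paper proves and your route to it is exactly the one the paper says does not go through directly. The substitute is the meromorphic–slicing/partial–trace argument of Propositions~\ref{H2} and~\ref{keyest}, which reduces the effective codimension of the exceptional locus from $2$ to $1$ on each slice $L_w$ and is the main new idea.
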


The following theorem of Rong and Zhang \cite{RZ} proves a version of the above conjecture for general  geometric transitions.
\begin{theorem}\label{RZ} Let $T(X, Y, Y_s)$ be a geometric transition. The metric completion of $(Y_{reg}, g_Y)$ is a compact length metric space and we denote it by $(Y', d_{Y'})$. Then $(X, g(t))$ and $(Y_s, g_{Y_s} )$ converge to $(Y', d_{Y'})$ in Gromov-Hausdorff topology as $t, s \rightarrow 0$.

\end{theorem}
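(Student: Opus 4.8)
The plan is to reduce the statement to Gromov's precompactness theorem together with the Cheeger--Colding structure theory for non-collapsed limits with a Ricci lower bound, and to use the smooth convergence of the metrics away from the exceptional locus to pin down the limit space. Concretely, I would carry out three tasks: (i) establish uniform lower volume and upper diameter bounds for $(X,g(t))$ and for $(Y_s,g_{Y_s})$, so that both families are Gromov--Hausdorff precompact and the structure theory applies; (ii) upgrade the potential estimates to smooth convergence of $g(t)$ (and of $g_{Y_s}$) on compact subsets of $Y_{reg}$; and (iii) show that every subsequential Gromov--Hausdorff limit is isometric to the metric completion of $(Y_{reg},g_Y)$, and conclude by uniqueness of the limit.

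For (i): $g(t)$ is Ricci-flat and $\int_X\omega(t)^3=\alpha_t^3=(\alpha+t[\mathcal{L}_0])^3\to[\mathcal{L}_0]^3>0$, so the total volumes stay between two positive constants; combined with the uniform diameter bound of \cite{To1,ZY} this provides the non-collapsing and boundedness needed. Hence any sequence $t_i\to0$ admits a subsequence with $(X,g(t_i))\to(Z,d_Z)$ in Gromov--Hausdorff topology, where $Z$ is a compact length space whose regular set $\mathcal{R}$ is an open, dense, connected smooth Ricci-flat manifold, whose singular set has Hausdorff codimension at least two (in fact at least four in the Ricci-flat case), and on which the intrinsic length distance of $\mathcal{R}$ coincides with $d_Z|_{\mathcal{R}}$ (by Cheeger--Colding and Colding--Naber). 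For (ii): Theorem \ref{EGZ1} and Proposition \ref{EGZ2} give $\varphi_t\to\varphi_0$ in $C^\infty_{loc}(X^\circ)$, hence $g(t)\to g_Y$ in $C^\infty_{loc}(X^\circ)$, which we identify with $C^\infty_{loc}(Y_{reg})$ via $f$. Therefore each compact $K\subset Y_{reg}$ embeds $\varepsilon$-isometrically into $(X,g(t))$ for small $t$, and passing to the limit produces a locally isometric embedding $\iota\colon(Y_{reg},g_Y)\hookrightarrow Z$ with dense image; since a local isometry carries smooth Ricci-flat points to smooth Ricci-flat points, $\iota$ identifies $Y_{reg}$ with $\mathcal{R}$.

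It remains, in (iii), to check that $\iota$ is distance-preserving for the intrinsic distances, $d_Z(\iota(x),\iota(y))=d_{g_Y}(x,y)$. The inequality ``$\leq$'' is immediate since $\iota$ is a local isometry and both spaces are length spaces; for ``$\geq$'', $d_Z(\iota(x),\iota(y))$ is the infimum of lengths of curves in $\mathcal{R}=\iota(Y_{reg})$ by the structure theory recalled above, and those lengths match the lengths of the corresponding curves in $(Y_{reg},g_Y)$. Thus $Z$ contains $(Y_{reg},d_{g_Y})$ isometrically as a dense subset, so $Z$ is exactly its metric completion; in particular $Z$ is independent of the subsequence, which gives $(X,g(t))\to(Y',d_{Y'})$ as $t\to0$ and shows $(Y',d_{Y'})$ is a compact length space. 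The family $(Y_s,g_{Y_s})$ is handled identically once the analogues of Theorem \ref{EGZ1} and Proposition \ref{EGZ2} are available on $\mathcal{M}\to\Delta$: the reference K\"ahler forms on $Y_s$ (restrictions of a fixed ample form on $\mathcal{M}$) have uniformly controlled geometry, $\mathcal{L}_s^3$ is constant in $s$ so non-collapsing holds, the normalized holomorphic volume forms converge locally away from the singular points, and Kolodziej's $L^\infty$ estimate together with the interior elliptic theory (via Tsuji's trick on $Y_{reg}$) yields uniform $C^\infty_{loc}(Y_{reg})$ bounds on the Calabi--Yau potentials and a uniform diameter bound as $s\to0$; the same Gromov precompactness and smooth-identification argument then applies.

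The main obstacle is the rigidity step in (iii): showing that the Gromov--Hausdorff limit is \emph{exactly} the metric completion of $(Y_{reg},g_Y)$, with no extra identifications or ``shortcuts''. A priori, minimizing geodesics of $g(t)$ (respectively $g_{Y_s}$) could concentrate near the shrinking exceptional rational curves (respectively the vanishing cycles) and create paths in the limit that are invisible to the smooth convergence on compact subsets of $Y_{reg}$. Ruling this out requires the quantitative Cheeger--Colding control on the Hausdorff dimension of the singular set of the limit together with a uniform collar estimate showing that small tubular neighbourhoods of the exceptional locus in $(X,g(t))$ have metrically small diameter and volume; only with such a collar estimate can one be sure the length metric of $Y_{reg}$ genuinely survives in the limit, and the analogous collar control for the degenerating complex structures $Y_s$ near the vanishing cycles is the most delicate ingredient of the whole argument.
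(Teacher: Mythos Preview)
This theorem is not proved in the paper at all: it is stated as a result of Rong and Zhang \cite{RZ} and used as a black box in the proof of Theorem~\ref{proof1} and of Corollary~\ref{main3}. There is therefore no ``paper's own proof'' to compare your proposal against.

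That said, your outline is a fair sketch of the Rong--Zhang strategy. Gromov precompactness from the uniform diameter bound and non-collapsing, smooth convergence on compact subsets of $Y_{reg}$ from the $L^\infty$ and higher-order potential estimates, and Cheeger--Colding theory to identify any subsequential limit with the metric completion of $(Y_{reg},g_Y)$ --- this is indeed the architecture of their argument. You have also put your finger on the genuine difficulty: showing that no ``shortcuts'' through the degenerating region survive in the Gromov--Hausdorff limit, so that the limit length metric on $\mathcal{R}$ really agrees with the intrinsic $g_Y$-metric on $Y_{reg}$. In your sketch this is handled by invoking that the singular set of a non-collapsed Ricci-flat limit has Hausdorff codimension at least four and that geodesics avoid it (Colding--Naber type statements); making this precise, and establishing the analogous collar control for the smoothing side $Y_s\to Y$, is exactly where the work in \cite{RZ} lies.

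One caution: the assertion that $\iota(Y_{reg})=\mathcal{R}$ requires more than ``a local isometry carries smooth Ricci-flat points to smooth Ricci-flat points''; you also need that no limit regular points arise outside the image, which is where the codimension estimate and the density/connectedness of $\mathcal{R}$ really enter. Your final paragraph essentially acknowledges this, but in a fully written proof that step should be made explicit rather than folded into the structure-theory citation.
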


The contribution of the paper is to give uniform estimates near the exceptional rational curves in the case of conifold transitions and prove the metric completion $(Y', d_{Y'})$ is homeomorphic to the projective variety $Y$ itself (cf. Theorem \ref{main1}).

The algebraic structure of conifold transitions are rather well understood (cf. \cite{Ro}). Let $T(X, Y, Y_s)$ be a conifold transition. $Y$ is then a normal Calabi-Yau threefold with isolated conifold singularities $y_1, y_2, ..., y_d$. If  $f: X \rightarrow Y$ is a minimal resolution of $Y$ at $y_1, y_2, ..., y_d$,  each component of the exceptional locus $D_j = f^{-1}(y_j)$ is a smooth rational curve $\PP^1$ with normal bundle $$N_{\PP^1} = \OO_{\PP^1} (-1) \oplus \OO_{\PP^1}(-1).$$ We define $X^{\circ} = X \setminus ( D_1\cup ... \cup D_d)$ and obviously $X^\circ$ is isomorphic to $Y_{reg}$. We will try to understand the local structure near these exceptional rational curves analytically in the next section.


\bigskip

\section{\bf Local  ansatz} \label{section3}

In this section, we will apply the Calabi ansatz introduced by Calabi \cite{C1} (also see \cite{Li, SY}) to understand the small contraction near the exceptional locus.

\bigskip

\noindent {\it  \large Calabi ansatz.} Let $E= \OO_{\PP^n}(-1)\oplus \OO_{\PP^n}(-1) \oplus ... \oplus \OO_{\PP^n}(-1)=\OO_{\PP^n}(-1)^{\oplus (n+1)}$ be the holomorphic bundle over $\PP^n$ of rank $n+1$.  Let $z=(z_1, z_2, ..., z_n)$ be a fixed set of inhomogeneous coordinates for $\PP^n$. and  $$\omega_{FS}= \ddbar \log (1+|z|^2) \in \OO_{\PP^n}(1) $$ be the Fubini-Study metric on $\PP^n$ and  $h$ be the hermitian metric on $\mathcal{O}_{\mathbb{P}^n} (-1)$ such that $Ric(h) = -\omega_{FS}$. This induces a hermtian metric $h_E$ on $E$ is given by
 $$h_E= h^{\oplus (n+1)}. $$
Under a local trivialization of $E$, we write
$$e^\rho = h_\xi (z) |\xi|^2, ~~~ \xi = (\xi_1, \xi_2, ..., \xi_{n+1}),$$ where $h_\xi(z)$ is a local representation for $h_E $ with
$h_\xi (z)= (1+|z|^2).$

Now we are going to  define a family of K\"ahler metrics on $E$ as below
\begin{equation}\label{metricrep1}
\omega = a \omega_{FS} + \ddbar u(\rho)
\end{equation}
for an appropriate choice of convex smooth function $u= u(\rho)$.  In fact, we have the following criterion due to Calabi \cite{C1} for the above form $\omega$ to be K\"ahler.
\begin{proposition}\label{kacon}

$\omega$ defined as above, extends to a global K\"ahler form on $E$ if and only if

\begin{enumerate}

\item[(a)]

$a>0$,

\item[(b)] $u'>0$ and $u''>0$ for $\rho\in (-\infty, \infty)$,

\item[(c)] $U_0 (e^\rho) = u(\rho)$ is smooth on $(-\infty, 0]$ with $U_0' (0)>0$.

\end{enumerate}

\end{proposition}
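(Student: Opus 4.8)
The plan is to analyze the form $\omega = a\omega_{FS} + \ddbar u(\rho)$ in two regions: away from the zero section, where $E$ carries natural coordinates $(z,\xi)$ and $\rho$ is a genuine smooth function, and in a neighborhood of the zero section $\PP^n \subset E$, where the fibre coordinates degenerate and one must pass to the function $U_0(e^\rho)$. First I would fix a local trivialization of $E$ over an affine chart of $\PP^n$ and write $\rho = \log h_\xi(z) + \log|\xi|^2$. A direct computation of $\ddbar u(\rho)$ gives $\ddbar u(\rho) = u'(\rho)\, \ddbar \rho + u''(\rho)\, \partial\rho \wedge \dbar\rho$. Since $\ddbar \log h_\xi(z) = -\omega_{FS}$ (pulled back from the base, by the choice $Ric(h) = -\omega_{FS}$), one has $\ddbar\rho = \ddbar\log|\xi|^2 - \pi^*\omega_{FS}$, and $\ddbar\log|\xi|^2$ is the (semi-)positive form coming from the Fubini-Study-type metric on the fibre directions. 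So $\omega = (a - u'(\rho))\,\pi^*\omega_{FS} + u'(\rho)\,\ddbar\log|\xi|^2 + u''(\rho)\,\partial\rho\wedge\dbar\rho$.

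The next step is to show that on the complement of the zero section this expression is a positive $(1,1)$-form precisely when $a>0$, $u'>0$, $u''>0$. I would split tangent vectors into the horizontal part (tangent to $\PP^n$) and the vertical/fibre part. Along a fixed fibre $\CC^{n+1}\setminus\{0\}$, the form $u'(\rho)\,\ddbar\log|\xi|^2 + u''(\rho)\,\partial\rho\wedge\dbar\rho$ restricts to the standard Calabi computation on $\CC^{n+1}\setminus\{0\}$: writing $r = |\xi|^2$, $\ddbar\log r$ has one null direction (the radial/Euler direction) on which $\partial\rho\wedge\dbar\rho = \tfrac{1}{4}\,\tfrac{|\partial r|^2}{r^2}$ is strictly positive, so the pair $(u',u'')$ positive makes the fibre restriction positive-definite; on the $\PP^n$-directions inside the fibre, $u'(\rho)\ddbar\log r$ already dominates. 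The horizontal directions are controlled by the coefficient $a - u'(\rho) + u'(\rho) = a$ of $\pi^*\omega_{FS}$ after recombining the $\ddbar\log|\xi|^2$ horizontal piece with $-\pi^*\omega_{FS}$; more carefully, one checks that the mixed horizontal-vertical terms do not destroy positivity because $\rho$'s dependence on $z$ is exactly through $h_\xi$ and the relevant Hessian contributions assemble into a manifestly nonnegative combination. This is standard but somewhat tedious; I would organize it by choosing at a point normal coordinates for $\omega_{FS}$ so the cross terms vanish to first order.

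The remaining and, I expect, the more delicate step is the extension across the zero section: as $|\xi|\to 0$ we have $\rho\to -\infty$, the coordinates $(z,\xi)$ no longer form a chart (the zero section is $\{\xi=0\}$ which has codimension $n+1$), so one must express $\omega$ in honest coordinates near a point of the zero section. Introducing $w = (w_0,\dots,w_n)$ adapted so that $E$ near the zero section looks like $\PP^n \times \CC^{n+1}$ (or rather the total space with its bundle structure), $\rho$ behaves like $\log|w|^2 + (\text{smooth})$, and $u(\rho) = U_0(e^\rho) = U_0(h_\xi(z)|w|^2)$ with $U_0$ smooth on $[0,\infty)$ near $0$ (condition (c) extended past $0$; condition (c) states smoothness on $(-\infty,0]$ with $U_0'(0)>0$, plus the already-assumed smoothness for $\rho$ bounded gives smoothness in a full neighborhood of $0$). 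Then $\ddbar u(\rho) = \ddbar U_0(s)$ with $s = h_\xi(z)|w|^2$; this is $U_0'(s)\,\ddbar s + U_0''(s)\,\partial s\wedge\dbar s$, which is smooth in $(z,w)$ because $s$ is a smooth function vanishing on the zero section, and $\ddbar s$ near $\{w=0\}$ is, up to smooth terms, $h_\xi(z)\,\ddbar|w|^2 > 0$ in the fibre directions. The positivity of $\omega$ there reduces to $a>0$ for the base directions and $U_0'(0)>0$ for the fibre directions — matching condition (c). Conversely, necessity of (a), (b), (c): (a) is forced by restricting $\omega$ to the zero section (which is $a\omega_{FS}$), (b) is forced by the fibre computation away from the zero section (the null direction of $\ddbar\log|\xi|^2$ forces $u''>0$, and then positivity in the $\PP^{n-1}$-of-fibre directions forces $u'>0$), and smoothness across the zero section with $U_0'(0)>0$ is forced by smoothness of $\omega$ and positivity in the fibre at $\xi=0$. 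I would present the necessity direction briefly at the end, since each clause is just the reading-off of a constraint from a restriction already computed in the sufficiency direction. The main obstacle is purely bookkeeping: keeping the change of variables near the zero section clean enough that the smoothness and positivity statements are transparent rather than buried in indices.
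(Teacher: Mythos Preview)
The paper does not actually prove this proposition: it is stated as a criterion ``due to Calabi \cite{C1}'' and followed immediately by the explicit formula (\ref{metricrep2}) for $\omega$, with no further argument. So there is no paper-proof to compare against; your plan is essentially the standard derivation one finds in Calabi's original work and its many expositions.

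Your overall strategy (compute on $E\setminus P_0$, then check smooth extension and positivity across the zero section via $U_0$) is the right one. However, there is a sign error that muddles your horizontal analysis. In the paper's conventions $h_\xi(z)=1+|z|^2$ and $\omega_{FS}=\ddbar\log(1+|z|^2)$, so $\ddbar\log h_\xi=+\omega_{FS}$, not $-\omega_{FS}$. (Equivalently: $Ric(h)=-\ddbar\log h$, so $Ric(h)=-\omega_{FS}$ gives $\ddbar\log h=+\omega_{FS}$.) With the correct sign one obtains $\omega=(a+u')\,\omega_{FS}+u'\,\ddbar\log|\xi|^2+u''\,\partial\rho\wedge\dbar\rho$, matching (\ref{metricrep2}). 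Your sentence ``the horizontal directions are controlled by the coefficient $a-u'(\rho)+u'(\rho)=a$ \ldots after recombining the $\ddbar\log|\xi|^2$ horizontal piece'' does not make sense as written: $\ddbar\log|\xi|^2$ has no horizontal component, so there is nothing to recombine. With the corrected sign the base coefficient is simply $a+u'$, and the necessity of $a>0$ then follows cleanly from restricting to the zero section, where $u'\to 0$ since $u'\sim U_0'(0)e^\rho$ as $\rho\to-\infty$. Once you fix this sign, the rest of your outline goes through; the paper's formula (\ref{metricrep2}), written in the moving coframe $\{dz^i,\nabla\xi^\alpha\}$, is a cleaner way to organize the positivity check than the decomposition you propose, since it diagonalizes the horizontal/vertical blocks and makes the conditions $a+u'>0$, $u'>0$, $u''>0$ immediately visible.
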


Straightforward calculations show  that
\begin{equation}\label{metricrep2}
\omega = (a + u'(\rho)) \omega_{FS} + h_\xi e^{-\rho} ( u' \delta_{\alpha \beta} + h_\xi e^{-\rho} ( u'' - u') \xi^{\bar \alpha} \xi^{\beta} ) \nabla \xi^\alpha \wedge \nabla \xi^{\bar \beta}.
\end{equation}
Here, $$\nabla \xi^\alpha = d \xi^\alpha + h_\xi^{-1} \partial h_\xi \xi^\alpha$$ and $\{ dz^i, \nabla \xi^\alpha\}_{i=1, ..., n, \alpha=1, 2, ..., n+1}$ is dual to the basis

\begin{equation*} \nabla_{z^i} = \frac{\partial}{\partial z^i} - h_\xi ^{-1} \frac{\partial h_\xi }{\partial z^i} \sum_\alpha \xi^\alpha \frac{\partial }{\partial \xi^\alpha}, ~~~~~~~ \frac{\partial}{\partial \xi ^\alpha}.
\end{equation*}

Let $L=\OO_{\PP^n}(-1)$. Then $E= L^{\oplus (n+1)}$. Let $p_\alpha: E \rightarrow L$ be the projection from $E$ to its $\alpha$th component, and let $e^{\rho_\alpha} = (1+|z|^2) |\xi_\alpha|^2$ and so $e^{\rho} = \sum_{\alpha=1}^{n+1} e^{\rho_\alpha}$.


\bigskip

\noindent{\it \large A local conifold flop.} From now on, we assume that $n=1$ and so $E$ is a rank two bundle over $\PP^1$.  Let $P_0$ be the zero section of $ E$ which is a rational curve with normal bundle $\OO_{\PP^1}(-1) \oplus \OO_{\PP^1}(-1)$. Then by contracting $P_0$, one obtains the variety $\hat E$ with only one isolated double point as singularity.  We can now define a flop for $E$ by letting $E'= \OO_{\PP^1} (-1)\oplus \OO_{\PP^1} (-1)\oplus \OO_{\PP^1}(-1)$. $\hat E$ is isomorphic to $E$ with a similar local trivialization $(w, \eta_1, \eta_2)$. Then the flop between $E$ and $E'$
\begin{equation}
\begin{diagram}\label{diag1}
\node{E} \arrow{se,b,}{f}  \arrow[2]{e,t,..}{\check{f} }     \node[2]{E'} \arrow{sw,r}{f'} \\
\node[2]{\hat E}
\end{diagram},
\end{equation} can be viewed as change of coordinates as below
$$ z = \frac{\eta_2}{\eta_1}, ~~\xi_1 = \eta_1, ~~\xi_2 = w\eta_1, $$
or
$$ w= \frac{\xi_2}{\xi_1}, ~~ \eta_1=\xi_1, ~~\eta_2 = z\xi_1.$$
We also have the following relation
$$e^\rho= (1+|z|^2) (|\xi_1|^2 + |\xi_2|^2) = (1+ |w|^2)( |\eta_1|^2 + |\eta_2|^2).$$

Let $\pi: E\rightarrow \PP^1$ and $\pi': E' \rightarrow \PP^1$. Then for each fixed $w\in \PP^1$ the proper transformation of $(\pi') ^{-1}(w)$ via $\check{f} ^{-1}$ is the hypersurface of $E$ given by
$$\xi_2 = w \xi_1.$$
Such a hypersurface is isomorphic to $\OO_{\PP^1}(-1)$ or simply $\CC^2$ blow-up at one point and we denote it by $L_w$. Hence we obtained a meromorphic family of isomorphic surfaces $L_w$ in $E$ parametrized by $w\in \PP^1 = \pi'(E')$. We  can also view $E$ as a meromorphic fibration of $\CC^2$ blow-up at one point over $\PP^1$.
The following lemma can be obtained by explicit calculations.
\begin{lemma} For each $w\in \PP^1$, $L_w$ is isomorphic to $\CC^2$ blow-up at one point. Furthermore, for $w_1\neq w_2$, $$L_{w_1} \cap L_{w_2} = P_0,$$
where $P_0$ is the zero section of $E$.

\end{lemma}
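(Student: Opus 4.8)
The plan is an explicit computation in the two standard trivializations of $E=\OO_{\PP^1}(-1)^{\oplus 2}$. Write $U_0=\{z\neq\infty\}$ with fibre coordinates $(\xi_1,\xi_2)$ and $U_1=\{z\neq 0\}$ with base coordinate $z'=1/z$ and fibre coordinates $(\xi_1',\xi_2')$. Since both summands are $\OO_{\PP^1}(-1)$ with the same transition function — and the identity $e^\rho=(1+|z|^2)(|\xi_1|^2+|\xi_2|^2)$ forces that transition to be $\xi_\alpha'=z\xi_\alpha$ — the equation cutting out $L_w$ reads $\xi_2=w\xi_1$ in $U_0$ and, after multiplying through by $z$, $\xi_2'=w\xi_1'$ in $U_1$. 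Hence for each finite $w$ the locus $L_w=\{\xi_2=w\xi_1\}$ is a well-defined closed subvariety of the total space $E$; for $w=\infty$ one reads the defining equation as $\xi_1=0$ (equivalently $\xi_1'=0$), which is the limiting case and, via the coordinate change displayed just before the lemma, is exactly the proper transform of $(\pi')^{-1}(\infty)$.

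First I would identify the isomorphism type of $L_w$. For finite $w$ the assignment $s\mapsto(s,ws)$ defines an injective morphism of holomorphic vector bundles $\OO_{\PP^1}(-1)\to\OO_{\PP^1}(-1)^{\oplus 2}=E$ (the coefficients $1,w$ are constant, so it respects the transition functions), and its image, regarded as a subvariety of the total space $E$, is precisely $L_w$; for $w=\infty$ one uses $s\mapsto(0,s)$ instead. Therefore $L_w$ is, as an abstract complex surface, the total space of $\OO_{\PP^1}(-1)$. Since $\OO_{\PP^1}(-1)$ is the tautological subbundle of the trivial $\CC^2$-bundle, its total space is $\{(x,\ell)\in\CC^2\times\PP^1:x\in\ell\}$, i.e. the blow-up of $\CC^2$ at the origin, which gives the first assertion. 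If one wishes to verify in addition that $L_w$ really is the stated proper transform, one restricts the flop $\check{f}$ to $E'\setminus P_0'$ (where it is an isomorphism, $P_0'$ being the zero section of $E'$), uses the substitutions $z=\eta_2/\eta_1$, $\xi_1=\eta_1$, $\xi_2=w\eta_1$ and their counterparts in the other chart to see that $\check{f}^{-1}$ carries $(\pi')^{-1}(w)\setminus P_0'$ isomorphically onto $\{\xi_2=w\xi_1\}\setminus P_0$, and notes that taking closure in $E$ adds back exactly the copy $\{\xi_1=\xi_2=0\}$ of $\PP^1$.

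For the intersection statement, let $w_1\neq w_2$ and fix a point $p\in\PP^1$. Working in whichever of $U_0,U_1$ contains $p$, a point over $p$ lies in $L_{w_1}\cap L_{w_2}$ if and only if its fibre coordinates satisfy $\xi_2=w_1\xi_1$ and $\xi_2=w_2\xi_1$ (with the modification $\xi_1=0$ in place of the relevant equation if one of the $w_i$ equals $\infty$). Subtracting gives $(w_1-w_2)\xi_1=0$, so $\xi_1=0$ and then $\xi_2=0$. Hence $L_{w_1}\cap L_{w_2}$ is cut out fibrewise by $\xi_1=\xi_2=0$, i.e. it is the zero section $P_0$ of $E$, as claimed.

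There is no genuine obstacle in this lemma; it is a bookkeeping exercise. The two points that need a little care are: checking the transition rule $\xi_\alpha'=z\xi_\alpha$, so that the defining equation of $L_w$ is chart-independent and so that the sub-line-bundle it cuts out has degree $-1$ (rather than some other degree, which would change the surface); and, if one insists on it, confirming that the proper transform of $(\pi')^{-1}(w)$ is literally $\{\xi_2=w\xi_1\}$, i.e. that the closure in $E$ of the image of $(\pi')^{-1}(w)\setminus P_0'$ under $\check{f}^{-1}$ adds back exactly $P_0$ and nothing else.
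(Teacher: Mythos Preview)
Your argument is correct and is precisely the kind of explicit computation the paper alludes to but omits (the paper simply states that the lemma ``can be obtained by explicit calculations'' and gives no proof). Your check of the transition rule $\xi_\alpha'=z\xi_\alpha$, the identification of $L_w$ with the total space of $\OO_{\PP^1}(-1)$ via the sub-line-bundle $s\mapsto(s,ws)$, and the fibrewise intersection computation are all sound; the only cosmetic remark is that the transition function is determined by the bundle structure itself rather than by the formula for $e^\rho$, though the latter is of course consistent with it.
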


\bigskip

\noindent {\it \large Local forms.}  We will define two reference forms on $E$.  We first fix a K\"ahler form $\hat \omega$ on $E$ by
\begin{eqnarray*}
&&\hat\omega \\
&=& \omega_{FS} + \ddbar e^{\rho}\\
&= &(1+ (1+|z|^2)e^\rho) \omega_{FS} +  \sqrt{-1} z\bar \xi d\xi\wedge d\bar z + \sqrt{-1} \bar z \xi dz \wedge d\bar \xi +  \sqrt{-1} (1+|z|^2) d\xi \wedge d\bar \xi.
\end{eqnarray*}
Then we choose a smooth closed nonnegative real $(1,1)$ form $\tau$ defined by
\begin{eqnarray*}
 &&\tau\\
 &=& \ddbar e^{\rho_1}\\
 & =& \sqrt{-1} |\xi_1|^2 dz\wedge d\bar z +  \sqrt{-1}z\bar \xi_1 d\xi_1 \wedge d\bar z +  \sqrt{-1}\bar z \xi_1 dz \wedge d\bar \xi_1 +   \sqrt{-1}(1+|z|^2) d\xi_1 \wedge d\bar \xi_1\\
 & =& \sqrt{-1} (1+|z|^2) e^{\rho_1 }\omega_{FS} + \sqrt{-1} z\bar \xi_1 d\xi_1 \wedge d\bar z +  \sqrt{-1}\bar z \xi_1 dz \wedge d\bar \xi_1 +   \sqrt{-1}(1+|z|^2) d\xi_1 \wedge d\bar \xi_1.
 \end{eqnarray*}
Although $\tau$ is not big, it defines a flat degenerate K\"ahler form on $L_w$ for each $w$.

\begin{lemma}\label{locm1}  Let $\nu_1 = z \xi_1$, $\nu_2 = \xi_1$, and $\nu=(\nu_1, \nu_2)$.  Then $e^{\rho_1} = |\nu|^2$ and $$\tau =  \sqrt{-1} \left( d\nu_1 \wedge d\overline \nu_1 + d\nu_2\wedge d\overline \nu_2 \right)$$ is the pullback of the flat Euclidean metric on $\CC^2$. Hence  $\tau$ is flat on $L_w \setminus P_0$ for each $w\in \PP^1$.
\end{lemma}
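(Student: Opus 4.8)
The plan is to verify Lemma \ref{locm1} by a direct but carefully organized computation. First I would record that on the chart under consideration $\xi_1$ is a fibre coordinate and $z$ is the base coordinate, so that $\nu_1 = z\xi_1$ and $\nu_2 = \xi_1$ are genuine holomorphic functions on $E$ away from the divisor $\{\xi_1 = 0\}$; in fact $(\nu_1, \nu_2)$ is precisely the holomorphic map realizing $L_w$ (for $w$ in the locus where $\xi_1$ trivializes) as $\CC^2$ blown up at the origin, since $\xi_1 \mapsto 0$ collapses the $\PP^1$ and the ratio $\nu_1/\nu_2 = z$ remains the base coordinate. From $e^{\rho_1} = (1+|z|^2)|\xi_1|^2 = |\xi_1|^2 + |z|^2|\xi_1|^2 = |\nu_2|^2 + |\nu_1|^2 = |\nu|^2$, which I would state first as it is the cleanest identity and immediately gives the potential-level statement.

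Next I would compute $\ddbar e^{\rho_1} = \ddbar |\nu|^2$. The point is simply that $|\nu|^2 = \nu_1\overline{\nu_1} + \nu_2\overline{\nu_2}$ with $\nu_1,\nu_2$ holomorphic, so $\partial |\nu|^2 = \overline{\nu_1}\,d\nu_1 + \overline{\nu_2}\,d\nu_2$ and hence
\[
\ddbar e^{\rho_1} = \ddbar|\nu|^2 = \sqrt{-1}\left( d\nu_1 \wedge d\overline{\nu_1} + d\nu_2 \wedge d\overline{\nu_2}\right),
\]
which is exactly the pullback under $\nu = (\nu_1,\nu_2)\colon E \dashrightarrow \CC^2$ of the standard Euclidean K\"ahler form $\sqrt{-1}\sum dw_j\wedge d\overline{w_j}$ on $\CC^2$. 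To make contact with the coordinate expression for $\tau$ already displayed in the excerpt, I would then expand $d\nu_1 = \xi_1\,dz + z\,d\xi_1$ and $d\nu_2 = d\xi_1$, substitute, and collect terms; this reproduces the three lines given for $\tau$ above, using $\omega_{FS} = \ddbar\log(1+|z|^2)$ only to rewrite the $dz\wedge d\bar z$ coefficient $|\xi_1|^2$ as $(1+|z|^2)e^{\rho_1}\omega_{FS}$ on the $z$-factor — a purely bookkeeping step. This confirms consistency of the two formulas for $\tau$.

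Finally, for the flatness claim I would argue as follows. On $L_w \setminus P_0$ the map $\nu$ restricts to a biholomorphism onto $\CC^2 \setminus \{0\}$ (indeed $L_w \setminus P_0 \cong \CC^2\setminus\{0\}$, the blow-up minus its exceptional curve), and under this identification $\tau$ pulls back to the flat Euclidean metric; hence $\tau|_{L_w\setminus P_0}$ has vanishing curvature. Equivalently, since $(\nu_1,\nu_2)$ furnish holomorphic coordinates there in which $\tau = \sqrt{-1}\sum d\nu_j \wedge d\overline{\nu_j}$, the metric is literally Euclidean in these coordinates, so flat. I would also note in passing that across $P_0$ the form $\tau$ degenerates (its rank drops, as $d\nu_1$ and $d\nu_2$ become linearly dependent where $\xi_1 = 0$), which is why the statement is restricted to the complement of $P_0$ and why $\tau$ is only a \emph{degenerate} K\"ahler form on each $L_w$. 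I do not expect a genuine obstacle here: the only thing to watch is keeping the roles of the trivialization coordinates straight and making sure the change of variables $(z,\xi_1)\leftrightarrow(\nu_1,\nu_2)$ is invertible exactly off $\{\xi_1 = 0\}$, which is the locus cut out by $P_0$ inside each $L_w$.
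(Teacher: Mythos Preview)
Your proposal is correct and is precisely the direct computation the paper has in mind; the paper states this lemma without proof, and your verification that $e^{\rho_1}=(1+|z|^2)|\xi_1|^2=|\nu_1|^2+|\nu_2|^2$ together with $\tau=\ddbar|\nu|^2=\sqrt{-1}\sum_j d\nu_j\wedge d\overline{\nu_j}$ (using only that $\nu_1,\nu_2$ are holomorphic) is exactly what is needed. Your remark that $(\nu_1,\nu_2)$ restricts to the blow-down $L_w\setminus P_0\to\CC^2\setminus\{0\}$, so that the pulled-back Euclidean form is genuinely a flat K\"ahler metric there, is the right way to read the final clause.
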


The restriction of $\hat \omega$ on $L_w$ for fixed $w$ is given by
\begin{eqnarray*}
\hat \omega &=& \omega_{FS} + \ddbar e^\rho =\omega_{FS} +  (1+ |w|^2) \ddbar e^{\rho_1}\\
%
%
&=&  \left(1+ (1+ |w|^2 )(1+|z|^2)e^{\rho_1} \right) \omega_{FS}  \\
&&+  \sqrt{-1}  (1+ |w|^2 ) \left( z\bar \xi_1 d\xi_1 \wedge d\bar z + \bar z \xi_1 dz \wedge d\bar \xi_1 +  (1+|z|^2) d\xi_1 \wedge d\bar \xi_1 \right).
\end{eqnarray*}
We are only interested the local behavior of these forms near the zero section $P_0$, so we define
\begin{equation}\label{defome}\Omega= \{ \rho < 0\} \subset E,
\end{equation}
 or equivalently in local coordinates, $$e^\rho= (1+|z|^2)|\xi|^2= |\nu|^2 + (1+|\nu_1/\nu_2|^2)|\xi_2|^2\leq 1. $$
Then
\begin{equation}\label{compp}
L_w \cap \Omega= \{ (z, \xi_1, \xi_2)~|~ \xi_2= w \xi_1, ~e^{\rho_1} \leq (1+|w|^2)^{-1} \} .
\end{equation}
We now compare $\hat\omega$ and $\tau$ on each meromorphic fibre $L_w\simeq \OO_{\PP^1}(-1)$.

\begin{lemma} \label{loccom} For  all $w \in \CC$,
\begin{equation}\label{loccom2}
\tau  |_{L_w\cap \Omega } \leq  \hat \omega |_{L_w\cap \Omega } \leq 2 e^{-\rho_1} \tau |_{L_w\cap \Omega} .
\end{equation}
\end{lemma}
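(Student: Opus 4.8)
The plan is to reduce both estimates, by restricting everything to a fixed meromorphic fibre $L_w\simeq\OO_{\PP^1}(-1)$, to an elementary pointwise comparison of Hermitian metrics on a surface, carried out in the flat coordinates $\nu=(\nu_1,\nu_2)=(z\xi_1,\xi_1)$ of Lemma~\ref{locm1}, which identify $L_w\setminus P_0$ with $\CC^2\setminus\{0\}$. From Lemma~\ref{locm1} and the computation just above, on $L_w$ one has
$$\hat\omega|_{L_w}=\omega_{FS}|_{L_w}+(1+|w|^2)\,\tau|_{L_w},\qquad e^{\rho_1}=|\nu|^2,\qquad \tau|_{L_w}=\ddbar|\nu|^2;$$
moreover, since $[\nu_1:\nu_2]=[z:1]$, the projection $\pi$ factors through $\nu$ on $L_w\setminus P_0$, so that $\omega_{FS}|_{L_w}=\ddbar\log|\nu|^2$ there.

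Granting this, the left inequality is immediate: $\hat\omega|_{L_w}-\tau|_{L_w}=\omega_{FS}|_{L_w}+|w|^2\,\tau|_{L_w}\geq 0$, because $\omega_{FS}|_{L_w}$ is a nonnegative $(1,1)$-form (a pullback from $\PP^1$) and $\tau\geq 0$; this gives $\tau|_{L_w\cap\Omega}\leq\hat\omega|_{L_w\cap\Omega}$. For the right inequality I would invoke the standard identity
$$\ddbar\log|\nu|^2=\frac{\ddbar|\nu|^2}{|\nu|^2}-\frac{\sqrt{-1}\,\partial|\nu|^2\wedge\dbar|\nu|^2}{|\nu|^4},$$
which says precisely that $e^{-\rho_1}\tau|_{L_w}-\omega_{FS}|_{L_w}=|\nu|^{-4}\,\sqrt{-1}\,\partial|\nu|^2\wedge\dbar|\nu|^2\geq 0$, i.e. $\omega_{FS}|_{L_w}\leq e^{-\rho_1}\tau|_{L_w}$. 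On $L_w\cap\Omega$, formula \eqref{compp} gives $e^{\rho_1}\leq(1+|w|^2)^{-1}$, hence $2e^{-\rho_1}-1-|w|^2\geq e^{-\rho_1}$, so that
$$2e^{-\rho_1}\,\tau|_{L_w}-\hat\omega|_{L_w}=\big(2e^{-\rho_1}-1-|w|^2\big)\,\tau|_{L_w}-\omega_{FS}|_{L_w}\geq e^{-\rho_1}\tau|_{L_w}-\omega_{FS}|_{L_w}\geq 0,$$
which is the claimed bound. As a cross-check one can reach the same conclusion by a one-line matrix computation in the coordinates $(z,\xi_1)$: the Hermitian matrix of $e^{-\rho_1}\tau-\omega_{FS}$ has nonnegative diagonal entries $\tfrac{|z|^2}{(1+|z|^2)^2}$ and $\tfrac1{|\xi_1|^2}$ and vanishing determinant, hence is positive semidefinite.

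The one step I would be careful about — and really the only genuine obstacle here — is the zero section $P_0\subset L_w\cap\Omega$, where the coordinates $\nu$ degenerate (they realise the blow-down $L_w\to\CC^2$) and $e^{-\rho_1}$ is unbounded. Accordingly the chain \eqref{loccom2} is to be understood on the dense open subset $(L_w\cap\Omega)\setminus P_0$, on which all three forms are smooth and the comparison above is a bona fide pointwise inequality of Hermitian metrics; this restriction is harmless for the later applications, where one integrates or takes suprema and $P_0$ is a proper analytic subset.
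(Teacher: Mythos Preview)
Your argument is correct and follows essentially the same route as the paper: restrict to $L_w$, write $\hat\omega|_{L_w}=\omega_{FS}|_{L_w}+(1+|w|^2)\tau|_{L_w}$, use $\omega_{FS}|_{L_w}\leq e^{-\rho_1}\tau|_{L_w}$ together with $1+|w|^2\leq e^{-\rho_1}$ from \eqref{compp} for the upper bound, and the obvious nonnegativity for the lower bound. The only cosmetic difference is that you obtain $\omega_{FS}|_{L_w}\leq e^{-\rho_1}\tau|_{L_w}$ via the identity $\ddbar\log|\nu|^2=|\nu|^{-2}\ddbar|\nu|^2-|\nu|^{-4}\sqrt{-1}\,\partial|\nu|^2\wedge\dbar|\nu|^2$, whereas the paper expands $e^{\rho_1}\omega_{FS}$ directly in the $\nu$-coordinates; these are the same computation.
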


\begin{proof} The lower bound for $\hat\omega$ is trivial because $$\hat\omega\geq \ddbar e^\rho\geq\ddbar e^{\rho_1} = \tau.$$
Restricted on each $L_w \cap \Omega$, $$\ddbar e^\rho = (1+|w|^2)\ddbar e^{\rho_1} \leq e^{-\rho_1} \tau,$$ because $1+|w|^2 \leq e^{-\rho_1}$ by  (\ref{compp}). We also have on $L_w$, $$e^{\rho_1} \omega_{FS} \leq  \frac{  \sqrt{-1} |\nu_2|^2}{ 1+ |\nu_1/\nu_2|^2} d\left(\frac{\nu_1}{\nu_2} \right) \wedge d \overline{ \left(\frac{\nu_1}{\nu_2} \right)} \leq \ddbar |\nu|^2 = \tau.$$ The lemma  follows immediately as $$\hat\omega = \omega_{FS} + \ddbar e^{\rho} \leq 2e^{-\rho_1} \tau $$ restricted on $L_w\cap \Omega$.

\end{proof}
We also remark that  the estimate (\ref{loccom2}) still holds if one changes the trivialization by $U(2)$ action on $\xi=(\xi_1, \xi_2)$, because $\hat \omega$ is invariant by $U(2)$-action and the bounding coefficients do not depend on the choice of local trivialization as long as they are  equivalent by $U(2)$-action.


\bigskip
\noindent{\it \large A local model.}  We will now construct a family of complete Ricci-flat K\"ahler metrics on $E$. Such metrics are   given in \cite{CaO} and here we give the calculations in terms of  the Calabi ansatz.
Let  $\omega_E(t)=t \omega_{FS} + \ddbar u$ be a K\"ahler metric with Calabi symmetry defined on $E$ for $t\in (0, 1]$. Then the Ricci curvature of $\omega_E(t)$ is given by
$$ Ric(\omega_E(t) ) = -\ddbar \left(\log (t+u')u'u'' - 2\rho\right).$$
The vanishing Ricci curvature is equivalent the following equation
$$\left((t+u')u'u''\right)' = e^{2\rho},$$
and then by integration twice, we have
\begin{equation}\label{cubic}
2(u')^3 + 3 t (u')^2 - 3 e^{2\rho} = 0.
\end{equation}
 For each $t>0$, equation (\ref{cubic}) can be explicitly solved for $u'$ by the cubic formula and it is asymptotically of order $t^{-1/2} e^\rho$ near $\rho= -\infty$.

 when $t=0$, equation (\ref{cubic}) becomes $(u')^3 = 3e^{2\rho}/2$ and the solution is explicitly given by $$u_{\hat E}' = (3/2)^{1/3} e^{2\rho/3}, ~~u_{\hat E}''= (2/3)^{2/3} e^{2\rho/3}.$$  Such $u_{\hat E}$ induces   a complete Ricci-flat K\"ahler metric
 \begin{equation}\label{limcy}
 \omega_{CY, \hat E} = \ddbar u_{\hat E}
 \end{equation}
 on $\hat E$ with an isolated cone singularity.


\bigskip

\section{\bf Estimates}\label{section4}

From now on, we consider the small contraction morphism $$\pi: X \rightarrow Y $$ from a smooth Calabi-Yau threefold $X$ to a conifold $Y$. Without loss of generality, we assume that $y_1, ..., y_d$ are all the ordinary double points of $Y$ with $D_i = \pi^{-1} (y_j)$ for $j=1, ..., d$.

Due to the estimates in Proposition \ref{EGZ2} away from the exceptional rational curves $D_1$, ..., $D_d$, it suffices to prove a uniform estimate for the degenerating family of Calabi-Yau metrics in a small neighborhood of each exceptional rational curve. Without loss of generality, we localize the problem by looking at a neighborhood of a fixed irreducible rational curve $D$ isomorphic to $$\Omega= E\cap \{\rho <0\}$$ with $D =\{ \rho=-\infty\} $ as defined in (\ref{defome}).

Let $\omega(t)$ be the Ricci-flat K\"ahler metric on $X$ defined in (\ref{maeqn}) for $t\in (0, 1]$ with the same assumptions. We will restrict all the metrics  and apply estimates to  $\Omega$. Then for each $t\in (0, 1]$, $\omega(t)$ is equivalent to $\hat \omega$ on $\Omega$.

The goal in the section is to obtain a second order estimate for the local potential of $\omega(t)$. The usual method in \cite{Ya1, SW2} does not quite work in this case as there does not exist a good reference metric with admissible curvature properties, in particular, some component in the curvature tensor of $\hat\omega$ tends $-\infty$ near $P_0$.  The geometric interpretation of such difficulty is that the degenerate locus for the complex Monge-Amp\`ere equation (\ref{maeqn}) has codimension greater than one. Since $E$ admits a meromorphic family of $\CC^2$ blow-up at one point as shown in section 3, we consider a partial 2nd order estimate by bounding the metric along each meromorphic fibre. We therefore take advantage of the geometric flop structure and apply the maximum principle by a meromorphic slicing, so that the exceptional locus restricted to each meromorphic fibre has codimension one and we can apply ideas in \cite{SW2}. More precisely,  for any K\"ahler form $\omega$ on $E$, we can take the fibre-wise trace of $\omega$ with respect to $\tau$ along each $L_w$.

\begin{definition} We define for $t\in (0, 1]$,

$$H (t, \cdot)= tr_{\tau|_{L_w\cap \Omega}} (\omega (t) |_{L_w\cap \Omega}).$$

\end{definition}

Here $\tau$ and $\omega(t)$ are restricted to $L_w$ as smooth real closed $(1,1)$-forms. $H$ can also be expressed as
$$ H(t, \cdot)  = \frac{ \omega(t) \wedge \tau \wedge dw\wedge d\bar w}{\tau^2 \wedge dw \wedge d\bar w}. $$
\begin{lemma}\label{H1} $H \in C^\infty( \overline{\Omega} \setminus S)$  for all $t\in (0, 1]$, where $S=\{ \rho_1=-\infty \}$. Furthermore,

\begin{enumerate}

\item[(a)] for all $t\in (0, 1]$, $$\sup_{\Omega} e^{\rho_1} H(t, \cdot) < \infty ;$$

\item[(b)] there exists $C>0$ such that for  all $t\in (0, 1]$,

$$ \sup_{\partial \Omega} e^{\rho_1} H (t, \cdot)  \leq C.$$

\end{enumerate}

\end{lemma}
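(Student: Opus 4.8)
The plan is to establish smoothness away from $S$ first, then bound $e^{\rho_1}H$ on the interior (a) and on the boundary (b) separately.

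\textbf{Smoothness.} On $\overline{\Omega}\setminus S$ the function $\rho_1$ is finite and the coordinates $\nu=(\nu_1,\nu_2)$ of Lemma \ref{locm1} are genuine holomorphic coordinates; in these coordinates $\tau$ is the flat Euclidean form, which is strictly positive on $L_w\setminus P_0$. Since $\omega(t)$ is a smooth K\"ahler form on $X$ (hence on $E$) for $t\in(0,1]$, its restriction to each slice $L_w\cap\Omega$ is a smooth real $(1,1)$-form depending smoothly on the slicing parameter $w$, and the quotient expression $H(t,\cdot)=\omega(t)\wedge\tau\wedge dw\wedge d\bar w \, / \, \tau^2\wedge dw\wedge d\bar w$ is a smooth function on $\overline{\Omega}\setminus S$ because the denominator is non-vanishing there. (One should note $\partial\Omega=\{\rho=0\}$ is disjoint from $S=\{\rho_1=-\infty\}$, since $e^\rho=\sum e^{\rho_\alpha}\ge e^{\rho_1}$ forces $\rho_1\le 0$ on $\overline\Omega$ and $\rho_1=-\infty$ means $\xi_1=0$, on which $e^\rho=(1+|z|^2)|\xi_2|^2$, still finite; so smoothness propagates up to $\partial\Omega$.)

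\textbf{Interior bound (a).} Here the key input is that $\omega(t)$ is uniformly equivalent to $\hat\omega$ on $\Omega$ (stated in the text above). Therefore $H(t,\cdot)=\tr_{\tau|_{L_w\cap\Omega}}(\omega(t)|_{L_w\cap\Omega}) \le C\,\tr_{\tau|_{L_w\cap\Omega}}(\hat\omega|_{L_w\cap\Omega})$, and by Lemma \ref{loccom} we have $\hat\omega|_{L_w\cap\Omega}\le 2e^{-\rho_1}\tau|_{L_w\cap\Omega}$, so that $\tr_{\tau}(\hat\omega)\le 2\cdot 2\cdot e^{-\rho_1}$ (the trace of $2e^{-\rho_1}\tau$ with respect to $\tau$ on a surface is $2\cdot 2e^{-\rho_1}$, i.e.\ the rank times the factor). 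Hence $e^{\rho_1}H(t,\cdot)\le C'$ pointwise on $\Omega\setminus S$ with $C'$ independent of $t$ and $w$; since $H$ is continuous and $S$ has measure zero (indeed codimension $\ge 1$ in each slice), $\sup_\Omega e^{\rho_1}H(t,\cdot)<\infty$. This already gives a uniform bound, which is stronger than what (a) literally asks for; in any case (a) follows.

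\textbf{Boundary bound (b).} On $\partial\Omega=\{\rho=0\}$ we again use $\omega(t)\le C\hat\omega$ (uniform equivalence) together with the comparison $\hat\omega|_{L_w\cap\Omega}\le 2e^{-\rho_1}\tau|_{L_w\cap\Omega}$ from Lemma \ref{loccom}, now evaluated along $\partial\Omega$. This yields $e^{\rho_1}H(t,\cdot)\le C$ on $\partial\Omega$ with $C$ independent of $t$. I expect the main subtlety — though not a serious obstacle — to be bookkeeping the $U(2)$-invariance remark: the bounding constants in Lemma \ref{loccom} are independent of the choice of local trivialization among $U(2)$-equivalent ones, so that the estimates patch to a genuine neighborhood of the whole rational curve $D$ rather than just one coordinate chart. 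The only genuinely delicate point is checking that $H$ really extends continuously across $\partial\Omega$ (so that "$\sup_{\partial\Omega}$" is meaningful) and that the boundary sits away from $S$; both follow from the explicit description of $\rho$, $\rho_1$ and $\Omega$ in Section \ref{section3}. Everything else is an application of the two comparison lemmas already proved together with the uniform $\hat\omega$-equivalence of $\omega(t)$.
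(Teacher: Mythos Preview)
There is a genuine gap. You assert that ``$\omega(t)$ is uniformly equivalent to $\hat\omega$ on $\Omega$ (stated in the text above)'', but this is a misreading: the text only says that \emph{for each fixed} $t\in(0,1]$, $\omega(t)$ is equivalent to $\hat\omega$ on $\Omega$, with constants that a priori depend on $t$. Indeed, uniform equivalence on all of $\Omega$ is exactly what fails as $t\to 0$ --- the metrics degenerate along $P_0$ --- and establishing controlled degeneration is the entire point of Section~4. For part (a) this misreading is harmless, since (a) only asks for finiteness at each fixed $t$; your argument (non-uniform equivalence plus Lemma~\ref{loccom}) is then identical to the paper's, though your added remark that this ``already gives a uniform bound'' is incorrect.

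For part (b), however, the gap is real: you need a bound independent of $t$, and the global equivalence you invoke does not supply one. The paper instead appeals to Proposition~\ref{EGZ2}: since $\partial\Omega=\{\rho=0\}$ lies in a compact set of $X^\circ$ away from the exceptional curve $P_0=\{\rho=-\infty\}$, the uniform local $C^\infty$ estimates there give $\omega(t)\le C\hat\omega$ on $\partial\Omega$ with $C$ independent of $t$. Combining this with Lemma~\ref{loccom} yields (b). (Incidentally, your claim that $\partial\Omega$ is disjoint from $S=\{\xi_1=0\}$ is also false: the point $\xi_1=0$, $(1+|z|^2)|\xi_2|^2=1$ lies in both. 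This does not affect the argument, since $e^{\rho_1}H$ remains bounded as one approaches $S$, but it is worth noting.)
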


\begin{proof}  For each fixed $t\in (0,1]$, $\omega(t)$ is equivalent to $\hat\omega$ on $\Omega$ and so $(a)$ follows immediately from Lemma \ref{loccom}. By Proposition \ref{EGZ2},  there exists $C>0$ such that for all $t\in (0,1]$ and $p \in \partial \Omega$, $$\omega(t) \leq C \hat\omega.$$ Therefore for all $t\in (0, 1]$,
$$\sup_{\partial \Omega} e^{\rho_1} H(t, \cdot) \leq C \sup_{\partial \Omega} e^{\rho_1} tr_{\tau|_{L_w\cap\Omega}} (\hat\omega|_{L_w \cap \Omega}) \leq 2C$$ and it proves $(b)$.

\end{proof}

\begin{proposition}\label{H2} Let $\Delta_t$ be the Laplace operator associated to the Ricci-flat K\"ahler metric $g(t)$ for $t\in (0, 1]$. Then

\begin{equation}
\Delta_t \log H \geq 0.
\end{equation}

\end{proposition}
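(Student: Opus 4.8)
The plan is to run the classical Yau--Aubin trace--Laplacian computation, but carried out along the meromorphic slicing by the surfaces $L_w$ rather than on $E$ directly; the reason is that on each $L_w$ the reference form $\tau$ is flat (Lemma~\ref{locm1}), which is precisely what prevents a reference--curvature term from appearing — such a term is exactly the unbounded quantity near $P_0$ that causes the difficulty noted above for $\hat\omega$. So I would first localize. Fix $p\in\Omega\setminus S$, where $H$ is smooth and positive (Lemma~\ref{H1}, together with positivity of $\omega(t)$ and of $\tau|_{L_w}$ off $P_0$) and where $w=\xi_2/\xi_1$ is a holomorphic submersion with fibres the $L_w$'s. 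Since $e^{\rho_1}=|\nu_1|^2+|\nu_2|^2$ is independent of $w$, one has $\tau=\sqrt{-1}(d\nu_1\wedge d\bar\nu_1+d\nu_2\wedge d\bar\nu_2)$, with no $dw$--terms, so in holomorphic coordinates $(\zeta_0,\zeta_1,\zeta_2)=(w,\nu_1,\nu_2)$ (or $(w,\nu_2,\nu_1)$ on the other chart of $\PP^1$) each leaf $L_w$ is a coordinate slice $\{\zeta_0=\mathrm{const}\}$ and $\tau|_{L_w}=\sqrt{-1}(d\zeta_1\wedge d\bar\zeta_1+d\zeta_2\wedge d\bar\zeta_2)$, uniformly in $w$. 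Writing $\omega(t)=\sqrt{-1}\,g_{i\bar j}\,d\zeta^i\wedge d\bar\zeta^j$, the defining formula for $H$ reduces on this patch to a positive constant times $g_{1\bar1}+g_{2\bar2}$; since $\Delta_t\log$ is insensitive to that constant, it suffices to prove $\Delta_t\log H\ge0$ with $H=g_{1\bar1}+g_{2\bar2}=\sum_{a=1}^{2}g_{a\bar a}$.

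Next I would compute $\Delta_t H$, treating $H=\sum_{a=1}^{2}g_{a\bar a}$ as a sum of metric coefficients in these fixed coordinates. Inserting the Kähler identity $\partial_i\partial_{\bar j}g_{k\bar l}=-R_{i\bar jk\bar l}+g^{p\bar q}\partial_i g_{k\bar q}\,\partial_{\bar j}g_{p\bar l}$ gives
\[
\Delta_t H=\sum_{a=1}^{2}g^{i\bar j}\partial_i\partial_{\bar j}g_{a\bar a}
=-\bigl(R_{1\bar1}+R_{2\bar2}\bigr)+\sum_{a=1}^{2}g^{i\bar j}g^{p\bar q}\partial_i g_{a\bar q}\,\partial_{\bar j}g_{p\bar a}.
\]
The curvature contribution is a partial trace of the Ricci tensor of $g(t)$ over honest coordinate directions, hence vanishes because $\omega(t)$ is Ricci--flat, and we are left with $\Delta_t H=\sum_{a}g^{i\bar j}g^{p\bar q}\partial_i g_{a\bar q}\,\partial_{\bar j}g_{p\bar a}\ge 0$. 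This is the one place the two structural inputs enter: $\tau$ flat along the leaves (no reference curvature term) and Ricci--flatness (the only surviving curvature is Ricci, which is zero).

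Finally I would upgrade $\Delta_t H\ge 0$ to $\Delta_t\log H\ge 0$, that is, to $H\,\Delta_t H\ge|\partial H|^2_{g(t)}$, by Cauchy--Schwarz at $p$. The coordinate freedom compatible with the slicing — reparametrizing the base coordinate and replacing the $\zeta_a$ by a $\zeta_0$--dependent unitary combination plus a function of $\zeta_0$ — acts on $g_{i\bar j}(p)$ through block--triangular Jacobians: a nonzero scalar on the base direction, a $U(2)$ matrix on the fibre directions, an arbitrary base--fibre coupling, and vanishing fibre--to--base block; one checks such a change can be chosen so that $g_{i\bar j}(p)=\lambda_i\delta_{ij}$ while $H=\sum_a g_{a\bar a}$ still holds in the new coordinates. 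Then $\partial_i H=\sum_a\partial_a g_{i\bar a}$ by the Kähler symmetry $\partial_i g_{a\bar a}=\partial_a g_{i\bar a}$, so for each $i$, $|\partial_i H|^2\le(\lambda_1+\lambda_2)\sum_{a}\lambda_a^{-1}|\partial_a g_{i\bar a}|^2$; multiplying by $\lambda_i^{-1}$, summing over $i$, and discarding the non-negative off--diagonal terms places the outcome inside $\Delta_t H=\sum_a\sum_{i,p}\lambda_i^{-1}\lambda_p^{-1}|\partial_i g_{a\bar p}|^2$, which yields $|\partial H|^2_{g(t)}\le H\,\Delta_t H$.

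I expect the conceptual crux to be the localization step: recognizing that the meromorphic slicing by the $L_w$ turns $\tau$ into a form that is flat along leaves, so that the unbounded component of the curvature of $\hat\omega$ near $P_0$ never enters. After that the estimate is the standard computation; the only remaining delicate bookkeeping is the compatibility claim just used, that the change of coordinates diagonalizing $g(p)$ can be taken to respect the slicing, hence to preserve the identity $H=\sum_a g_{a\bar a}$.
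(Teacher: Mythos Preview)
Your proof is correct and follows essentially the same approach as the paper: the same meromorphic slicing coordinates $(\nu_1,\nu_2,w)$, the same block--triangular change of frame at $p$ (unitary on the fibre, shear in the base--fibre coupling) to diagonalize $g$ while keeping $\tau|_{L_w}$ the identity, the same use of Ricci--flatness to kill the curvature term, and the same Cauchy--Schwarz to pass from $\Delta_t H\ge 0$ to $\Delta_t\log H\ge 0$. The only cosmetic difference is that you invoke the K\"ahler symmetry $\partial_i g_{a\bar a}=\partial_a g_{i\bar a}$ to compress the Cauchy--Schwarz into a single application, whereas the paper carries it out in two steps.
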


\begin{proof} We define $$I = \log H$$ and break the proof into the following steps.

\bigskip

\noindent {\it \large  Step 1.} We first make a choice of special coordinates. On $\Omega$, we have the standard local coordinates with Calabi symmetry as  defined in the previous section, i.e., for each $p \in \Omega$,  we have at $p$, $ (z(p), \xi_1(p), \xi_2(p))$.  Once we fix $p$, there exists a unique $w\in \PP^1$ such that $p\in L_w$.

\begin{enumerate}


\item[(a)] Near $p \in \Omega$, we first choose the coordinates $(\nu_1, \nu_2, w)$, where $\nu_1= \xi_1$ and $\nu_2=z\xi_1$ as defined in Lemma \ref{locm1} . We will apply a linear transformation to $(\nu_1, \nu_2, w)$ such that  $$x=(x_1, x_2, x_3)^T= A^{-1} (\nu_1, \nu_2, w)^T.$$   We assume that $A$ is in the form of
    $$\left(
        \begin{array}{cc}
           A' & a \\
          0 & 1 \\
        \end{array}
      \right), $$ where $A'$ is a $2\times 2$ matrix and $a$ is a $2\times 1$ vector. Immediately, we have $$x_3 = w.$$

\item[(b)]  Suppose $g(t)$ at $(t, p)$ is given by the following hermitian matrix with respect to coordinates $(\nu_1, \nu_2, w)$

$$ G= \left(
        \begin{array}{cc}
          B & b \\
          \overline{b}^T & c \\
        \end{array}
      \right),$$ where $B$ is a $2\times 2$ hermitian matrix, $b$ a $2\times 1$ vector.   Then under the new coordinates $x$, $g(t)$ at $p$ is given by the following hermitian matrix

    \begin{eqnarray*}
    \bar A^T G A &=& \left(
                      \begin{array}{cc}
                        \overline{A'}^T  B  A' & \overline{A'}^T B a + \overline{A'}^T b \\
                        \overline{a}^T B A' + \overline{b}^T A' &  \overline{a}^T B a + \overline{a}^T b + \overline{b}^T a + c  \\
                      \end{array}
                    \right) \\
    &=& \left(
                      \begin{array}{cc}
                        \overline{A'}^T  B  A' & \overline{A'}^T (B a + b) \\
                        (\overline{a}^T B + \overline{b}^T) A' &  \overline{a}^T B a + 2Re(\overline{a}^T b )+ c  \\
                      \end{array}
                    \right).
    \end{eqnarray*}

\item[(c)] We choose a unitary matix $A'$ such that $\overline{A'}^T B A'$ is diagonalized, i.e., $$ \left(
             \begin{array}{cc}
               \lambda_1 & 0 \\
               0 & \lambda_2 \\
             \end{array}
           \right) $$
     and choose $a$ such that $$ Ba= -b$$ since $B$ has rank $2$. Therefore under the coordinates $x$, at $(t, p)$,

$$g= \left(
      \begin{array}{ccc}
        \lambda_1 & 0 & 0 \\
        0 & \lambda_2 & 0 \\
        0  &  0 & \lambda_3  \\
      \end{array}
    \right),
$$
where $\lambda_3= c - \overline{a}^TB a.$
The matrix representation of $\tau$ under the coordinates $(\nu_1, \nu_2, w)$ is given by  $$\left(
              \begin{array}{ccc}
                1 & 0 & 0 \\
                0 & 1 & 0 \\
                0 & 0 & 0 \\
               \end{array} \right), $$
and so  its matrix representation under the coordinates $X$ at $(t, p)$ is given by              $$\tau= \overline{A}^T \left(
              \begin{array}{ccc}
                1 & 0 & 0 \\
                0 & 1 & 0 \\
                0 & 0 & 0 \\
              \end{array}
            \right) A= \left(
                         \begin{array}{ccc}
                           I_{2\times 2} & \overline{A'}^T a  \\
                           \overline{a}^T A' &  \overline{a}^T a \\
                         \end{array}
                       \right)
$$ since $A'$ is unitary. Since $x_3=w$ and on $L_w$, $x_3$ is constant and $\omega|_{L_w\cap \Omega} =\sqrt{-1} \sum_{i, j=1}^2 g_{i\bar j} dx^i \wedge d\overline{x^j}$.  Then at $(t, p)$, $$g|_{L_w} = \left(
                            \begin{array}{cc}
                              \lambda_1 & 0 \\
                              0 & \lambda_2 \\
                            \end{array}
                          \right), ~~~~~~~\tau |_{L_w} = \left(
                            \begin{array}{cc}
                              1 & 0 \\
                              0 & 1 \\
                            \end{array}
                          \right).
$$
Finally, we arrive at
$$ I(t, p) = \log \sum_{i, j=1, 2}(\tau |_{L_w}) ^{i\bar j} (g|_{L_w})_{i\bar j} = \log (\lambda_1 + \lambda_2).$$

 \end{enumerate}

\noindent {\it \large Step 2.}   Now we calculate $\Delta_t I$ at $(t, p)$ under the coordinates $x$.  Notice that $\tau|_{L_w}$ is a constant form $\sqrt{-1} (dx_1\wedge d \overline x_1 + dx_2 \wedge d\overline x_2) $ and so  all derivatives of $\tau|_{L_w}$ vanish.

We now apply the Laplace operator $\Delta_t$ to  $H$.
\begin{eqnarray*}
&&\Delta_t H \\
&=& \sum_{k, l=1}^3g^{k\bar l} \left( \sum_{i, j=12} \left( \tau|_{L_w} \right)^{i\bar j} g_{i\bar j} \right)_{k\bar l}\\
&=&\sum_{k, l=1}^3 \sum_{i, j=1, 2} g^{k\bar l} \left( \tau|_{L_w} \right)^{i\bar j} g_{i\bar j, k\bar l} - \sum_{k, l=1}^3\sum_{i, j, p, q=1, 2}g^{k \bar l}g_{i\bar j} \left( \tau|_{L_w} \right)^{i\bar q} \left( \tau|_{L_w} \right)^{ p\bar j} \tau_{p\bar q, k\bar l} \\
&=& - \sum_{k, l=1}^3 \sum_{i, j=1,2} g^{k\bar l} \left( \tau|_{L_w} \right)^{i\bar j}R_{i\bar j k\bar l} + \sum_{k, l, p, q=1}^3\sum_{i, j =1,2}g^{k\bar l} \left( \tau|_{L_w} \right)^{i\bar j} g^{p\bar q} g_{p \bar j, \bar l} g_{i\bar q, k}\\
&=& - \sum_{i, j=1, 2} \left( \tau|_{L_w} \right)^{i\bar j} R_{i\bar j} + \sum_{k, l, p,q=1}^3\sum_{i, j=1, 2}g^{k\bar l} \left( \tau|_{L_w} \right)^{i\bar j} g^{p\bar q} g_{p \bar j, \bar l} g_{i\bar q, k}\\
&=& \sum_{k,l, p, q=1}^3 \sum_{i, j=1, 2}g^{k\bar l} \left( \tau|_{L_w} \right)^{i\bar j}g^{p\bar q} g_{p \bar l, \bar j} g_{k\bar q, i}.
\end{eqnarray*}
Then
\begin{eqnarray*}
&& \Delta_t  I \\
&=&( H )^{-1}  \sum_{k, l=1}^3 \sum_{i, j=1, 2; p, q=1,2,3}g^{k\bar l}\left( \tau|_{L_w} \right)^{i\bar j} g^{p\bar q} g_{p \bar j, \bar l} g_{i\bar q, k} - (H )^{-2} |\nabla I |^2_g\\
&=& (H )^{-2}\sum_{k, l=1}^3\left( H   \sum_{i, j=1, 2; p, q=1,2,3}g^{k\bar l} \left( \tau|_{L_w} \right)^{i\bar j} g^{p\bar q} g_{p \bar j, \bar l} g_{i\bar q, k} -   g^{k\bar l} (\sum_{i, j=1, 2} \left( \tau|_{L_w} \right)^{i\bar j} g_{i\bar j, k})  (\sum_{i, j=1,2}\left( \tau|_{L_w} \right)^{j\bar i} g_{j\bar i, \bar l})    \right)\\
&=& (H )^{-2} \sum_{k, l=1}^3 \left( H   \sum_{i, j=1, 2; p, q=1,2,3}g^{k\bar l} \left( \tau|_{L_w} \right)^{i\bar j}g^{p\bar q} g_{p \bar l, \bar j} g_{k\bar q, i} -   g^{k\bar l} (\sum_{i, j=1, 2} \left( \tau|_{L_w} \right)^{i\bar j} g_{i\bar j, k})  (\sum_{i, j=1, 2} \left( \tau|_{L_w} \right)^{j\bar i} g_{j\bar i, \bar l})    \right).
\end{eqnarray*}

\noindent {\it \large Step 3.}   The proof of the proposition is now reduced to show that
$$\sum_{k, l=1}^3 \left(H  \sum_{i, j=1, 2; p, q=1,2,3}g^{k\bar l} \left( \tau|_{L_w} \right)^{i\bar j} g^{p\bar q} g_{p \bar l, \bar j} g_{k\bar q, i} -   g^{k\bar l} (\sum_{i, j=1, 2} \left( \tau|_{L_w} \right)^{i\bar j} g_{i\bar j, k})  (\sum_{i, j=1, 2} \left( \tau|_{L_w} \right)^{j\bar i} g_{j\bar i, \bar l})    \right)\geq 0. $$
Note that $\tau_{i\bar j}=\delta_{ij}$ for $i, j=1, 2$ and  $g= (\lambda_1, \lambda_2, \lambda_3)$. Then
\begin{eqnarray*}
&&\sum_{k, l=1}^3 g^{k\bar l} \left(\sum_{i, j=1, 2} \left( \tau|_{L_w} \right)^{i\bar j} g_{i\bar j, k} \right)  \left(\sum_{i, j=1,2} \left( \tau|_{L_w} \right)^{j\bar i} g_{j\bar i, \bar l} \right)\\
&=&\sum_{k=1, 2,3} \lambda_k^{-1}  |\sum_{i=1, 2}g_{i\bar i, k}|^2\\
&\leq& \sum_{i, j=1,2} \left( \sum_{k=1, 2,3} \lambda_k^{-1} |g_{i\bar i, k}|^2\right)^{1/2}  \left(\sum_{k=1, 2,3} \lambda_k^{-1} |g_{j \bar j, k}|^2 \right)^{1/2}\\
&=&\left( \sum_{i=1,2} \left(\sum_{k=1, 2,3} \lambda_k^{-1} |g_{i\bar i, k}|^2 \right)^{1/2}  \right)^2\\
&=&\left(   \sum_{i=1,2} \lambda_i^{1/2} \left( \sum_{k=1, 2,3} \lambda_k^{-1}\lambda_i^{-1} |g_{i\bar i, k}|^2 \right)^{1/2}               \right)^2\\
&\leq& \left(\sum_{i=1, 2} \lambda_i \right) \left( \sum_{i=1, 2;k=1, 2,3} \lambda_k^{-1}\lambda_i^{-1} |g_{i\bar i, k}|^2 \right) \\
&\leq& H  \left( \sum_{k, l=1, 2, 3; i=1, 2} \lambda_k^{-1}\lambda_l^{-1} |g_{i\bar l,  k}|^2 \right) \\
&=& H   \sum_{i, j=1,2; k, l, p,q=1,2, 3}  g^{k\bar l} \left( \tau|_{L_w} \right)^{i\bar j}  g^{p\bar q} g_{p\bar j, \bar l} g_{i\bar q, k}  .
\end{eqnarray*}

This completes the proof of the proposition.

\end{proof}

\begin{corollary} There exists $C>0$ such that on $\Omega$, for  all $t\in (0, 1]$,
\begin{equation}
  H \leq C e^{-\rho_1}.
 \end{equation}

\end{corollary}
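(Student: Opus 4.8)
The plan is to combine the differential inequality from Proposition \ref{H2} with a suitable barrier function and apply the maximum principle on $\overline{\Omega}$, but the difficulty is that $e^{-\rho_1}$ blows up along $S=\{\rho_1=-\infty\}$, so a direct application is not available; I would instead work with the quantity $e^{\rho_1} H$, or rather its logarithm, and produce an auxiliary function that controls it.

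First I would recall the two facts already in hand: from Proposition \ref{H2}, $\Delta_t \log H \geq 0$ on $\Omega$ (away from $S$, where $H$ is smooth by Lemma \ref{H1}); from Lemma \ref{H1}(b), $e^{\rho_1} H \leq C$ on $\partial\Omega$; and from Lemma \ref{H1}(a), $\sup_\Omega e^{\rho_1} H < \infty$ for each fixed $t$ (with an a priori $t$-dependent bound). The goal is to upgrade (a) to a uniform bound. Since we want to bound $e^{\rho_1} H$, set $Q = \log H + \rho_1$ and try to show $\Delta_t Q \geq -(\text{controlled error})$; the point is that $\rho_1$ is a local potential-type function and $\ddbar e^{\rho_1} = \tau \geq 0$ as a form on $E$, while $\omega(t) \geq \tau$ restricted to each $L_w$ (Lemma \ref{loccom}), so one expects $\Delta_t \rho_1$ to be bounded below, or at worst to be comparable to $e^{-\rho_1}$ in a way that can be absorbed. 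Concretely, I would compute $\Delta_t \rho_1$ using $\rho_1 = \log e^{\rho_1} = \log\big((1+|z|^2)|\xi_1|^2\big)$ and $\ddbar e^{\rho_1} = \tau$: then $\ddbar \rho_1 = e^{-\rho_1}\tau - e^{-2\rho_1}\, \partial e^{\rho_1}\wedge\dbar e^{\rho_1}$, so $\Delta_t \rho_1 = e^{-\rho_1}\tr_{\omega(t)}\tau - e^{-2\rho_1}|\partial e^{\rho_1}|^2_{\omega(t)} \geq -e^{-2\rho_1}|\partial e^{\rho_1}|^2_{\omega(t)}$. This error term is genuinely bad, so the naive $\log H + \rho_1$ will not immediately work; one needs a term to dominate it.

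The right fix, in the spirit of Yau's $C^2$-estimate, is to add a large multiple of a bounded function whose Laplacian absorbs the cross terms, or — more in line with the present geometric setup — to use that $e^\rho$ (not $e^{\rho_1}$) is a global bounded function on $\overline\Omega$ with $\ddbar e^\rho = \hat\omega - \omega_{FS}$, and that $\omega(t)$ is uniformly equivalent to... no, it is not uniformly equivalent to $\hat\omega$. Instead I would exploit the local Ricci-flat model: on $\Omega$ one has the reference metric $\omega_E(t) = t\omega_{FS} + \ddbar u$ from Section 3 solving the same Monge–Ampère equation up to bounded factors, and comparing $\omega(t)$ to it via the $L^\infty$-estimate (Theorem \ref{EGZ1}, Proposition \ref{EGZ2}). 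The cleanest route: apply the maximum principle to $Q = \log H + \rho_1 - A\varphi_t$ for a large constant $A$, where $\varphi_t$ is the Monge–Ampère potential; since $\omega(t) = \omega_t + \ddbar\varphi_t$, one gets $\Delta_t(-A\varphi_t) = -A\,n + A\tr_{\omega(t)}\omega_t$, and the positive term $A\tr_{\omega(t)}\omega_t \geq A\tr_{\omega(t)}(t\theta')$ can be played against the bad $-e^{-2\rho_1}|\partial e^{\rho_1}|^2_{\omega(t)}$ term using the Cauchy–Schwarz/Young inequality, provided one first establishes that $e^{-2\rho_1}|\partial e^{\rho_1}|^2_{\omega(t)} \leq C e^{-\rho_1} H$ (which follows from $|\partial e^{\rho_1}|^2 \leq e^{\rho_1}\tau$ pointwise along $L_w$ together with the definition of $H$). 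Then $Q$ attains its max either on $\partial\Omega$, where Lemma \ref{H1}(b) and $\|\varphi_t\|_\infty \leq C$ give a uniform bound, or at an interior point of $\Omega\setminus S$, where $\Delta_t Q \leq 0$ forces a uniform bound on $e^{\rho_1}H$ at that point — and since $Q\to -\infty$ on $S$ (because $\log H + \rho_1 = \log(e^{\rho_1}H)$ stays bounded above on $\Omega$ while... actually $\rho_1\to -\infty$, $\log H$ may blow up; one checks $e^{\rho_1}H$ is bounded near $S$ by Lemma \ref{H1}(a), so $Q$ does not escape to $+\infty$ at $S$), the interior maximum analysis is legitimate.

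The main obstacle I anticipate is handling the locus $S$ where $\rho_1 = -\infty$: one must verify that the maximum of $Q$ is not "hidden" at $S$, i.e., that $Q$ does not tend to $+\infty$ approaching $S$. This is exactly where Lemma \ref{H1}(a) is essential — it gives $e^{\rho_1}H$ bounded on all of $\Omega$ (for each fixed $t$), hence $Q = \log(e^{\rho_1}H) - A\varphi_t$ is bounded above on $\overline\Omega\setminus\partial\Omega$ and the supremum is attained or approached in the interior or on $\partial\Omega$; the fixed-$t$ bound is enough to run the argument, and the conclusion is the uniform bound $H \leq C e^{-\rho_1}$ independent of $t$. A secondary technical point is that the computations in Step 1 of Proposition \ref{H2} were carried out in coordinates adapted pointwise to $(t,p)$, so when differentiating $\rho_1$ and $\varphi_t$ one must take care that $\rho_1$, viewed in those coordinates, still has $\ddbar\rho_1$ controlled by $\tau$ and a rank-one negative term; invariance of $\tau$ and $\hat\omega$ under $U(2)$ (remarked after Lemma \ref{loccom}) makes this bookkeeping manageable.
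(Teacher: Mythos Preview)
Your proposal overlooks the key simplification that makes the paper's proof a few lines long. You wrote $\ddbar\rho_1 = e^{-\rho_1}\tau - e^{-2\rho_1}\partial e^{\rho_1}\wedge\dbar e^{\rho_1}$ and declared the second term ``genuinely bad''. But $\rho_1 = \log(1+|z|^2) + \log|\xi_1|^2$, and $\log|\xi_1|^2$ is pluriharmonic on $\Omega\setminus S$; hence $\ddbar\rho_1 = \omega_{FS}$, a \emph{nonnegative} form, and $\Delta_t\rho_1 = \tr_{\omega(t)}(\omega_{FS}) > 0$. Your two terms in fact combine to give $\omega_{FS}$, not a bad error. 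With this in hand, the paper simply sets $I_\epsilon = \log H + (1+\epsilon)\rho_1$, notes $\Delta_t I_\epsilon > 0$ on $\Omega\setminus S$, observes that $I_\epsilon = \log(e^{\rho_1}H) + \epsilon\rho_1 \to -\infty$ near $S$ (since $e^{\rho_1}H$ is bounded above for each fixed $t$ by Lemma~\ref{H1}(a) while $\epsilon\rho_1\to -\infty$), applies the maximum principle to put the max on $\partial\Omega$, invokes Lemma~\ref{H1}(b) for a uniform bound there, and lets $\epsilon\to 0$. No $-A\varphi_t$ barrier, no Yau-type absorption, no Cauchy--Schwarz.

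Your detour through $-A\varphi_t$ is therefore unnecessary, and as sketched it is not clear it would close: the claimed absorption ``$e^{-2\rho_1}|\partial e^{\rho_1}|^2_{\omega(t)} \leq Ce^{-\rho_1}H$ \dots\ played against $A\,\tr_{\omega(t)}\omega_t$'' is not justified --- near $S$ the left side can be enormous and there is no evident reason $\tr_{\omega(t)}\omega_t$ should dominate it. Separately, your treatment of $S$ is incomplete: knowing only that $Q=\log(e^{\rho_1}H)-A\varphi_t$ is bounded above near $S$ does \emph{not} ensure the supremum is attained on $\overline\Omega\setminus S$, so the maximum principle cannot be applied directly. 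The paper's $(1+\epsilon)$-weight is precisely the device that forces the maximum off of $S$; your proposal lacks an analogous mechanism.
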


\begin{proof}  Let $$I_{\epsilon} = \log H + (1+\epsilon) \rho_1$$ for $\epsilon>0$. Let $S= \{\rho_1=-\infty\} $. Then  for all $\epsilon>0$, $\limsup_{p \rightarrow S} I_\epsilon = -\infty$ by Lemma \ref{H1},  and on $\Omega\setminus S$, $$\Delta_t I_\epsilon >0,$$ because of Proposition \ref{H2} and the fact that $\Delta_t \rho_1= \Delta_t \log (1+|z|^2)|\xi_1|^2 = tr_{\omega}(\omega_{FS})>0$ on $\Omega\setminus S$.

Applying the maximum principle for $I_\epsilon$, we know that the maximum of $I_\epsilon$ has to be achieved on $\partial \Omega$.  Then by Lemma \ref{H1}, there exists $C>0$ such that for all $\epsilon\in (0,1]$ and $t\in(0,1]$,
$$\sup_{\Omega\setminus S} I_\epsilon = \sup_{\partial \Omega} I_\epsilon \leq \sup_{\partial \Omega} I_0  \leq C.$$ The corollary is then proved by letting $\epsilon \rightarrow 0$.

\end{proof}

We define a holomorphic vector $V$ on $\Omega$ by
$$ V=   \xi_1 \frac{\partial}{\partial \xi_1} +  \xi_2 \frac{\partial}{\partial \xi_2}.$$ $V$ vanishes along $P_0$ and $$|V|^2_{\hat \omega} = e^\rho.$$ We also consider the normalized vector field
$$W= \frac{V}{|V|_{\hat \omega}} = e^{-\rho/2} \sum_{\alpha=1,2} \xi_\alpha \frac{\partial}{\partial \xi_\alpha}.$$

Now we can obtain uniform bounds for the degenerating Ricci-flat K\"ahler metrics $g(t)$ near the exceptional curves.

\begin{proposition} \label{keyest} There exists $C>0$ such that for all $t\in (0, 1]$  and on $\Omega$,
\begin{equation} \label{tanest}
C^{-1}\omega_{\hat E} \leq \omega(t) \leq Ce^{-\rho} \omega_{\hat E},
\end{equation}
and
\begin{equation}\label{verest}
|W|^2_{g(t)} \leq Ce^{-\rho/2},
\end{equation}
where $\omega_{\hat E}= \ddbar e^\rho$.

\end{proposition}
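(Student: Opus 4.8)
The plan is to bootstrap from the fibrewise estimate $H\leq Ce^{-\rho_1}$ of the preceding corollary to a genuine two-sided bound on $\omega(t)$ on all of $\Omega$, and then to extract the vertical estimate \eqref{verest} as a special case. The key observation is that the two-sided inequality we want is, up to the conformal factor $e^{-\rho}$, comparison with the explicit local Calabi-Yau model $\omega_{\hat E}=\ddbar e^\rho$, and that $\omega_{\hat E}$ is uniformly equivalent to $\hat\omega$ on the bounded region $\Omega=\{\rho<0\}$; so it suffices to prove \eqref{tanest} with $\hat\omega$ in place of $\omega_{\hat E}$. The lower bound $C^{-1}\hat\omega\leq\omega(t)$ is the easier half and follows from the Monge-Ampère equation \eqref{maeqn} together with an upper bound on $\tr_{\omega(t)}\hat\omega$: indeed once we know $\tr_{\omega(t)}\hat\omega\leq Ce^{-\rho}$ (or even a weaker polynomial bound), the arithmetic-geometric mean inequality applied to $\omega(t)^3=c(t)\,\Omega_{CY}$ with $\Omega_{CY}$ comparable to $\hat\omega^3$ near $P_0$ converts the upper bound on the trace into the desired lower bound on $\omega(t)$, after possibly incurring an extra power of $e^{-\rho}$.

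So the heart of the matter is the upper bound $\omega(t)\leq Ce^{-\rho}\hat\omega$. First I would record how $H$, the fibrewise trace along $L_w$, controls the ``horizontal'' and ``mixed'' parts of $\omega(t)$: in the adapted coordinates $(\nu_1,\nu_2,w)$ of Lemma \ref{locm1}, $H$ is exactly $\lambda_1+\lambda_2$ (the sum of the $\nu$-eigenvalues in the frame used in the proof of Proposition \ref{H2}), and the mixed entries $g_{i\bar w}$ are controlled by $\lambda_i$ and by the $ww$-entry via the Cauchy-Schwarz inequality $|g_{i\bar w}|^2\leq g_{i\bar i}\,g_{w\bar w}$. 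Thus it remains only to bound the single ``vertical'' direction, i.e. $g_{w\bar w}$ in these coordinates, or equivalently $|W|^2_{g(t)}$ for the normalized field $W=e^{-\rho/2}\sum_\alpha\xi_\alpha\partial_{\xi_\alpha}$; this is precisely content \eqref{verest}. To obtain it I would run a second maximum-principle argument, now for the quantity $\log|W|^2_{g(t)}+(\tfrac12+\epsilon)\rho$ (or a similar combination), using: (i) the Chern-Lu/Yau-type inequality for $\Delta_t\log|W|^2_{g(t)}$, which contributes a curvature term from the target metric $\hat\omega$ restricted to the directions $W$ hits — crucially $W$ is tangent to the fibres $L_w$ on which $\hat\omega$ has \emph{controlled} (indeed, by Lemma \ref{loccom}, $\hat\omega\leq 2e^{-\rho_1}\tau$ with $\tau$ flat) geometry, so the bad curvature components of $\hat\omega$ near $P_0$ do not enter; (ii) the fact that $|W|^2_{\hat\omega}=1$ by construction, so the quantity is finite and its boundary behaviour on $\partial\Omega$ is controlled by Proposition \ref{EGZ2}; (iii) the sign of $\Delta_t\rho=\tr_{\omega(t)}\omega_{FS}>0$, as in the previous corollary, to absorb error terms and to kill the supremum at the bad set $S=\{\rho_1=-\infty\}$ or $P_0$. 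Combining the $H$-bound with the $|W|^2_{g(t)}$-bound yields $\omega(t)\leq Ce^{-\rho}\hat\omega$ on $\Omega$, hence \eqref{tanest} and \eqref{verest} after switching back to $\omega_{\hat E}$.

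The main obstacle, as the text itself flags, is that $\hat\omega$ is \emph{not} a good reference metric globally: one component of its curvature tensor blows up to $-\infty$ near $P_0$, so a naive Yau second-order estimate against $\hat\omega$ fails. The way around it is exactly the meromorphic-slicing philosophy already used for Proposition \ref{H2}: one never differentiates $\hat\omega$ in the bad direction. Concretely, for the vertical estimate I would differentiate $|W|^2$ using the flat structure on each $L_w$ (Lemma \ref{locm1}), so that all curvature contributions come either from the flat metric $\tau$ or from the $U(2)$-invariant, fibrewise-controlled piece of $\hat\omega$; the $U(2)$-invariance remarked after Lemma \ref{loccom} is what makes the bounding constants independent of $w$ and of the chosen trivialization. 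The remaining care is bookkeeping: checking that the cross terms between the fibre directions and the $w$-direction in $\Delta_t\log|W|^2$ have a definite sign or are dominated (again via Cauchy-Schwarz against the $H$-estimate), and that the $\epsilon\to 0$ limit in the maximum principle is legitimate because the barrier term $\rho$ is bounded on $\Omega$. Once these are in place the proposition follows.
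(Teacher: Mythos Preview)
There are two genuine gaps. For \eqref{tanest} you miss the key simplification: $\omega_{\hat E}=\ddbar e^\rho$ is the pullback of the flat Euclidean metric on $\CC^4$ under $(z,\xi)\mapsto(\xi_1,\xi_2,z\xi_1,z\xi_2)$, so its bisectional curvature vanishes. The paper therefore runs the Schwarz lemma directly against $\omega_{\hat E}$ (not $\hat\omega$): with $L=\log\tr_{\omega(t)}\omega_{\hat E}-\varphi_t$ one gets $\Delta_t L\geq\tr_{\omega(t)}\omega_{\hat E}-3$, and the maximum principle on $\Omega$ (boundary controlled by Proposition~\ref{EGZ2}) yields the lower bound $\omega(t)\geq C^{-1}\omega_{\hat E}$ immediately; the upper bound then follows from the volume comparison $\omega(t)^3\leq Ce^{-\rho}\omega_{\hat E}^3$. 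The $H$-estimate is not used at all for \eqref{tanest}. Your alternative route, assembling the full upper bound from $H$ plus a ``vertical'' piece, cannot work because you misidentify what $W$ measures: $V=\xi_1\partial_{\xi_1}+\xi_2\partial_{\xi_2}$ is \emph{tangent} to every slice $L_w$ (in the coordinates $(\nu_1,\nu_2,w)$ one has $V=\nu_1\partial_{\nu_1}+\nu_2\partial_{\nu_2}$), so $|W|^2_{g(t)}$ is a fibrewise quantity already dominated by $H$, and is in no sense equivalent to $g_{w\bar w}$. Your own later remark that $W$ is tangent to $L_w$ contradicts your earlier claim; neither $H$ nor $|W|^2$ sees the $w$-direction.

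For \eqref{verest}, the exponent $-\rho/2$ is strictly sharper than what $H\leq Ce^{-\rho_1}$ alone gives (that only yields $|W|^2\leq Ce^{-\rho}$ via $|V|^2_g\leq e^{\rho_1}H$). Your barrier $\log|W|^2_{g(t)}+(\tfrac12+\epsilon)\rho=\log|V|^2_{g(t)}+(-\tfrac12+\epsilon)\rho$ is not subharmonic for small $\epsilon$: one only knows $\Delta_t\log|V|^2\geq 0$ (Ricci-flatness) and $\Delta_t\rho>0$, and the coefficient $-\tfrac12+\epsilon$ is negative with no compensating lower bound. The paper's device is different and is where the $H$-estimate actually enters: since both $\log H$ and $\log|V_1|^2_{g(t)}$ (for the holomorphic field $V_1=\xi_1\partial_{\xi_1}$) are $\Delta_t$-subharmonic, so is their sum, and with the $\epsilon\rho_1$ barrier the maximum principle gives $|V_1|^2_{g(t)}\cdot H\leq C$ on $\Omega$. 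Combined with the pointwise Cauchy--Schwarz inequality $|V_1|^2_{g(t)}\leq e^{\rho_1}H$ this yields $(|V_1|^2_{g(t)})^2\leq Ce^{\rho_1}$; at $w=0$ one has $V_1=V$ and $\rho_1=\rho$, and $U(2)$-invariance in $\xi$ then propagates the bound $|W|^2_{g(t)}\leq Ce^{-\rho/2}$ to all of $\Omega$.
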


\begin{proof}  We break the proof into the following steps.

\bigskip

\noindent {\it \large Step 1.}   We apply the similar argument in the proof of Schwarz lemma \cite{Y2, ST1}. Notice that $\omega (t) = \omega_t + \ddbar \varphi$ with $\varphi\in C^\infty(X)$ uniformly bounded in $L^\infty(X)$ for $t\in (0, 1]$. Also there exists $C_1>0$ such that for all $t\in (0,1]$ and on $\Omega$, $\omega_t\geq   C_1 \omega_{\hat E}$ on $\Omega.$ Then we consider the quantity $$L = \log tr_{\omega} (\omega_{\hat E}) -  \varphi.$$
$\omega_{\hat E}$ restricted to $\Omega$ is in fact the pullback of a flat metric on $\CC^4$ given by a local morphism $(\xi_1, \xi_2, z\xi_1, z\xi_2)$. Then straightforward calculations give
$$\Delta_t L \geq  tr_{\omega}(\omega_t) - 3 \geq  tr_\omega(\omega_{\hat E}) - 3.$$
Applying the maximum principle, we have
$$tr_\omega(\omega_{\hat E}) \leq \sup_{\partial \Omega} tr_{\omega}(\omega_{\hat E}) +3.$$
Note that $tr_{\omega(t)}(\omega_{\hat E}) $ is uniformly bounded on $\partial \Omega$. Hence $tr_{\omega}(\omega_{\hat E})$ is uniformly bounded above and so there exists $C_1>0$ such that
\begin{equation}\label{sch1}
\omega\geq C_1 \omega_{\hat E}.
\end{equation}

\bigskip

\noindent {\it \large Step 2.}   Since $\omega^3$ is uniformly equivalent to $\hat \omega^3$ and $e^{-\rho} \omega_{\hat E}^3$  in $\Omega$, there exists $C_2>0$ such that
\begin{equation}\label{sch2}
 \omega^3 \leq C_2 e^{-\rho} \omega_{\hat E}^3.
\end{equation}
 By the estimates (\ref{sch1}) and (\ref{sch2}), there exists $C_3>0$ such that
$$tr_{\omega_{\hat E}} (\omega) \leq  C_3 e^{-\rho} $$ and so $\omega\leq C_3 e^{-\rho} \omega_{\hat E}.$
This completes the proof for estimate (\ref{tanest}).

\bigskip

\noindent {\it \large Step 3.}   Let $V_1 = \xi_1 \frac{\partial}{\partial \xi_1}$ be the holomorphic vector field on $\Omega$. Then $V_1$ vanishes on $\rho_1=-\infty$ and
$ |V_1|^2_{\hat \omega} = (1+|z|^2) |\xi_1|^2= e^{\rho_1}.$
Using the normal coordinates for $\omega$, we can show that

$$\Delta_t |V_1|^2_g = - (V_1)^i (V_1)^{\bar j} R_{i\bar j} + g^{k\bar l} g_{i\bar j} \left((V_1)^{i})_k ((V_1)^{\bar j} \right)_{\bar l} = |\partial V|^2_g$$
and so
$$\Delta_t \log |V_1|^2_g = (|V_1|^2_g)^{-2} \left( |V_1|^2_g |\partial V_1|^2_g - |\nabla_t |V_1|_g^2|^2  \right)\geq 0. $$

We now define
$$G_\epsilon = \log \left( e^{\epsilon \rho_1}|V_1|^2_\omega tr_{\tau|_{L_w}} (\omega|_{L_w}) \right) = I + \log |V_1|_\omega^2 + \epsilon \rho_1$$
for $\epsilon\in (0, 1]$.
For each $t\in (0, 1]$, $G_\epsilon$ is smooth in $\Omega$ away from $ \rho_1 = -\infty $,  and it tends to $-\infty$ near $\rho_1=-\infty $  for all $\epsilon \in (0,1]$ by Lemma \ref{H1}. Furthermore, there exists $C_4>0$ such that for all $\epsilon\in (0,1]$ and $t\in (0, 1]$,
$$ \sup_{\partial \Omega} G_\epsilon  \leq C_4. $$
On the other hand,
$$ \Delta_t G_\epsilon= \Delta_t I + \Delta_t \log |V_1|^2_\omega + \epsilon \Delta_t \rho_1> 0.$$
By the maximum principle, $$\sup_{\Omega} G_\epsilon \leq \sup_{\partial \Omega } G_\epsilon \leq C_4.$$
Then by letting $\epsilon$ tend to $0$, we have
\begin{equation}\label{dirc}
|V_1|^2_\omega tr_{\tau|_{L_w}} (\omega|_{L_w}) \leq C_4.
\end{equation}

\bigskip

\noindent {\it \large Step 4.}   We will apply the estimate (\ref{dirc}) to prove (\ref{verest}).  Under the coordinates $(w, \nu_1, \nu_2)$, we have
$$\left( \tau|_{L_w} \right)_{\nu_i \bar\nu_j }= \delta_{ij}, ~~~V_1 = \nu_1\frac{\partial }{\partial \nu_1} + \nu_2 \frac{\partial}{\partial \nu_2} - w\frac{\partial }{\partial w}, ~~~|V_1|_{\tau}^2 = |\nu|^2= e^{\rho_1}, $$
At any $p\in L_w$ with $w=0$, we have
$$V_1 = V, ~~\rho_1 = \rho, $$
\begin{eqnarray*}
|V_1|^2_g &=&  |\nu_1|^2  g_{\nu_1 \bar \nu_1} + 2Re (\nu_1 \bar \nu_2 g_{\nu_1\bar\nu_2}) + |\nu_2|^2  g_{\nu_2 \bar \nu_2} \\
& \leq &  (|\nu_1|^2 + |\nu_2|^2) \left(  g_{\nu_1 \bar \nu_1} + g_{\nu_2 \bar \nu_2 }  \right) \\
&=& e^{\rho_1} tr_{\tau|_{L_0}} (\omega|_{L_0})  .
\end{eqnarray*}
Combined with (\ref{dirc}),   there exists $C_4>0$ such that for all $t\in (0, 1]$ and on $L_0 \cap \Omega$,
$$|V_1|_\omega^2  \leq C_4 e^{ \rho_1/2}, ~ or ~ |V|_\omega^2  \leq C_4 e^{ \rho/2}.$$
Equivalently, we have, $$ \left( |W|^2_\omega  \right) |_{L_0\cap \Omega} \leq C_7 e^{-\rho/2}|_{L_0\cap \Omega}.$$
Notice that $W$, $V$, $\rho$ are $U(2)$-invariant in terms of $\xi$ and all the bounds we have derived do not depend on the choice of trivialization differing by $U(2)$-action. Therefore we have
$$ |W|^2_\omega \leq C_7 e^{-\rho/2}$$ uniformly for $t\in (0, 1]$ and $\Omega$. This completes the proof of the proposition.

\end{proof}

The uniform bound on $diam(X, g(t))$ is already known to the general Calabi-Yau degeneration due to \cite{To1}. The following corollary follows from Proposition \ref{keyest}.

\begin{corollary} There exists $C>0$ such that for all $t\in (0, 1]$,

\begin{equation}
diam(X, g(t)) \leq C, ~~~~diam(X\setminus \{ D_1, ..., D_d\}, g(t)) \leq C.
\end{equation}

\end{corollary}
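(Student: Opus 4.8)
The goal is the uniform diameter bound for $(X, g(t))$ and for $(X \setminus \{D_1, \dots, D_d\}, g(t))$. The plan is to combine the interior estimates of Proposition \ref{EGZ2} with the degeneration estimates of Proposition \ref{keyest} and patch them together over $X$. First I would recall that outside a fixed small neighborhood of the exceptional curves, say on a compact set $K \subset X^{\circ}$, Proposition \ref{EGZ2} gives $C^2$ bounds on $\varphi_t$, hence $\omega(t)$ is uniformly equivalent to the fixed background metric $\omega_0$ there; so $\mathrm{diam}(K, g(t))$ is uniformly bounded and the distance between any two boundary components $\partial\Omega_j$ is uniformly controlled. It thus suffices to bound, uniformly in $t$, the $g(t)$-distance from an arbitrary point $p$ in a neighborhood $\Omega_j$ of $D_j$ to the fixed boundary $\partial \Omega_j$ — and in the case of $X \setminus \{D_1, \dots, D_d\}$ one must do this with paths avoiding $D_j$.

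The key point is that by Proposition \ref{keyest} we have $\omega(t) \le C e^{-\rho}\,\omega_{\hat E}$ on $\Omega_j$, and $\omega_{\hat E} = \ddbar e^{\rho}$ is (the pullback of) a flat metric on $\CC^4$ under the morphism $(\xi_1, \xi_2, z\xi_1, z\xi_2)$, so $\mathrm{diam}(\Omega_j, \omega_{\hat E})$ is already bounded. The only danger is the blow-up factor $e^{-\rho}$ near $P_0 = \{\rho = -\infty\}$. To handle this, I would split the length of a radial path into the contribution transverse to $P_0$ and the contribution along the directions spanned by the normalized field $W$. Along the "vertical" direction, integrating $|W|_{g(t)} \le C e^{-\rho/4}$ from $\rho = -\infty$ outward: parametrize by $\rho$ (note $d\rho$ corresponds to moving in the $W$-direction since $|V|^2_{\hat\omega} = e^\rho$ gives $\rho$ as essentially $2\log$ of the fiber radius), so the vertical length is bounded by $\int_{-\infty}^{0} e^{-\rho/4}\, e^{\rho/2}\, d\rho = \int_{-\infty}^0 e^{\rho/4}\, d\rho < \infty$, which is finite and $t$-independent. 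This shows any point of $\Omega_j$ can be joined to the zero section $P_0$, and then $P_0$ itself has finite $g(t)$-diameter (it is a fixed $\PP^1$ and $\omega(t)$ restricted to it is controlled by the tangential bound $\omega(t) \le Ce^{-\rho}\omega_{\hat E}$, which near $\rho = 0$ is just $\le C'\omega_{\hat E}$), and $P_0$ can be joined to $\partial\Omega_j$ by a path of bounded length. Concatenating gives $\mathrm{diam}(\Omega_j, g(t)) \le C$ uniformly.

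For the second statement — the diameter of $X \setminus \{D_1, \dots, D_d\}$ — the same construction works except that paths must avoid $P_0$. This is precisely where the meromorphic fibration structure and the estimate \eqref{dirc} are useful: instead of aiming at $P_0$, I would move a point $p \in L_w \cap \Omega_j$ first outward along the flat directions of $\tau$ inside its own meromorphic slice $L_w$ until it reaches $\partial\Omega_j \cap L_w$, using that $\omega(t)|_{L_w} \le C e^{-\rho_1}\tau|_{L_w}$ (Lemma \ref{loccom} combined with the bound $H \le Ce^{-\rho_1}$) and that $(L_w \cap \Omega_j, e^{-\rho_1}\tau)$ — a punctured $\CC^2$-blow-up with a conical flat metric — has finite diameter away from its center. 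Since $L_w \setminus P_0$ is disjoint from the other $L_{w'}$, and hence from $P_0$, this produces paths staying in $X^\circ$; their lengths are bounded by a $t$-independent constant by the same $\int e^{\rho_1/4}d\rho_1$-type estimate.

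\textbf{Main obstacle.} The technical heart is making the "integrate along the $W$-direction" argument rigorous: one must choose, for each $p$, an explicit path (e.g.\ the integral curve of $\mathrm{Re}(W)$ or simply scaling $\xi \mapsto s\xi$) and verify that its $g(t)$-length is dominated by the convergent integral above using \eqref{verest}, uniformly in $t$ and in the choice of trivialization up to $U(2)$ — the $U(2)$-invariance of $W, V, \rho$ and of all the bounds is what makes this uniform. The bookkeeping of constants across the patching regions $\partial\Omega_j$ is routine given Proposition \ref{EGZ2}.
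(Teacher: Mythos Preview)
Your proposal has the right core tool --- the radial scaling path $s \mapsto (z, s\xi)$ together with the estimate $|W|^2_{g(t)} \le Ce^{-\rho/2}$ from Proposition~\ref{keyest} --- but you route the paths in the wrong direction, and this creates a genuine gap in each half of the argument.

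For the first bound you send $p$ \emph{inward} along the radial path to $P_0$ and then assert that $P_0$ has uniformly bounded $g(t)$-diameter because ``$\omega(t) \le Ce^{-\rho}\omega_{\hat E}$, which near $\rho=0$ is just $\le C'\omega_{\hat E}$''. But $P_0$ sits at $\rho = -\infty$, not $\rho = 0$, and moreover $\omega_{\hat E}|_{P_0} = 0$; the tangential bound is of the form $\infty\cdot 0$ on $P_0$ and says nothing. (A bound $\omega(t)|_{P_0}\le C\omega_{FS}$ does hold, but it is proved in the paper as a separate corollary \emph{after} this one, by a different computation.) For the second bound your slicing argument hits a harder wall: from $H \le Ce^{-\rho_1}$ you get at best $\omega(t)|_{L_w} \le Ce^{-\rho_1}\tau|_{L_w}$, and $e^{-\rho_1}\tau = |\nu|^{-2}|d\nu|^2$ is the \emph{cylindrical} metric on $\CC^2\setminus\{0\}$, not a conical one; its radial length $\int ds/s$ diverges at the origin. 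So the claim that $(L_w\cap\Omega, e^{-\rho_1}\tau)$ has finite diameter is false, and no convergent ``$\int e^{\rho_1/4}d\rho_1$''-type integral emerges from this bound alone.

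The paper's proof avoids both problems with one stroke: run the \emph{same} radial path $s\mapsto(z,s\xi)$ but \emph{outward}, from $p$ to a point $p'\in\partial\Omega$, and then connect $p'$ to $q'$ along $\partial\Omega$, where $g(t)$ is uniformly equivalent to $\hat\omega$ by Proposition~\ref{EGZ2}. The outward radial segment has $g(t)$-length dominated by $\int e^{\rho/4}\,d\rho<\infty$ exactly as you computed, and it never meets $P_0$; hence the single construction bounds both $\mathrm{diam}(\Omega,g(t))$ and $\mathrm{diam}(\Omega\setminus P_0,g(t))$ at once. Your ``Main obstacle'' paragraph already names this path --- you only need to traverse it in the other direction.
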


\begin{proof} Since $|W|_{g(t)}^2 \leq Ce^{-\rho/2}$, any point $p=(z, \xi)$ in $\Omega$ can be connected by a radial path $\gamma_{p}$ defined by $$(z, s \xi), ~ s \in \left[0, \frac{1}{(1+|z|^2)|\xi|^2} \right] $$ to $P_0$ and $\partial \Omega$ with $p' = \gamma \left(\frac{1}{(1+|z|^2)|\xi|^2} \right)\in \partial \Omega$. Then
the arc length of $\gamma_{p}$ with respect to $g(t)$ is uniformly bounded, i.e., there exists $C'>0$ such that for all $t\in (0, 1]$ and for all $p\in \Omega$, $$|\gamma_p |_{g(t)} \leq C'.$$
On the other hand, $g(t)$ is uniformly equivalent to $\hat \omega$ on $\partial \Omega$. Given any two points $p, q\in \Omega$, we can joint $p, q$ by $\gamma_p$, $\gamma_q$ and a smooth geodesic path $\gamma_{p', q'}$ with respect to $\hat \omega$ joining $p'$ and $q'$ in $\partial \Omega$.
Therefore,  both $diam(\Omega, g(t))$ and $diam(\Omega\setminus P_0, g(t))$ are  uniformly bounded and this completes the proof of the corollary.

\end{proof}

The following corollary shows that the restriction of $g(t)$ to the exceptional rational curve is uniformly bounded above.
\begin{corollary}
There exists $C>0$ such that for all $t\in (0, 1]$ ,

\begin{equation}
\omega(t)|_{P_0} \leq C \omega_{FS}|_{P_0}.
\end{equation}

\end{corollary}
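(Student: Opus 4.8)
The plan is to read off the bound directly from the upper estimate $\omega(t)\le Ce^{-\rho}\omega_{\hat E}$ of Proposition~\ref{keyest}. The subtle point is that $e^{-\rho}\omega_{\hat E}$ blows up along $P_0$, so one cannot simply set $\rho=-\infty$; however, the component of $\omega_{\hat E}$ in the base direction of $\PP^1$ vanishes along $P_0$ to exactly the order of $e^\rho$, so the quotient of that component by $e^\rho$ extends smoothly across $P_0$ and is comparable to $\omega_{FS}$. Making this precise in the right coordinate chart gives the corollary.

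Concretely, I would work first in the standard Calabi trivialization over $U_0\subset\PP^1$, with inhomogeneous base coordinate $z$ and fibre coordinates $\xi=(\xi_1,\xi_2)$, so that $P_0\cap U_0=\{\xi=0\}$ and $e^\rho=(1+|z|^2)|\xi|^2$. A one-line computation gives $(\omega_{\hat E})_{z\bar z}=|\xi|^2$, hence $e^{-\rho}(\omega_{\hat E})_{z\bar z}=(1+|z|^2)^{-1}$, which is independent of $\xi$ and therefore smooth up to $P_0$. Taking the $z\bar z$-component of the matrix inequality $\omega(t)\le Ce^{-\rho}\omega_{\hat E}$ on $\Omega\cap U_0\setminus P_0$ and letting $\xi\to 0$, I obtain $(\omega(t))_{z\bar z}|_{P_0}\le C(1+|z|^2)^{-1}$. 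Since $(\omega_{FS})_{z\bar z}=(1+|z|^2)^{-2}$, this reads $\omega(t)|_{P_0}\le C(1+|z|^2)\,\omega_{FS}|_{P_0}$, which already yields $\omega(t)|_{P_0}\le 2C\,\omega_{FS}|_{P_0}$ on the region $\{|z|\le 1\}\subset P_0$, uniformly in $t$.

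It remains only to cover the point $z=\infty$, where the factor $1+|z|^2$ is unbounded. Here I would repeat the identical computation in the antipodal Calabi trivialization over $U_1$, with base coordinate $z'=1/z$ and fibre coordinates $\xi'=z\xi$: one has $e^\rho=(1+|z'|^2)|\xi'|^2$ and $(\omega_{\hat E})_{z'\bar z'}=|\xi'|^2$ in these coordinates, and the inequality of Proposition~\ref{keyest} is intrinsic (it was moreover observed, following Lemma~\ref{loccom}, to be unchanged under a $U(2)$-equivalent change of trivialization), so the same argument gives $\omega(t)|_{P_0}\le 2C\,\omega_{FS}|_{P_0}$ on $\{|z'|\le 1\}$. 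As $P_0=\{|z|\le 1\}\cup\{|z'|\le 1\}$, the two estimates together prove the corollary.

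I do not expect a serious obstacle, since the analytic content is entirely contained in Proposition~\ref{keyest}. The one place that needs care is precisely the observation that one must not restrict to $P_0$ inside a single affine chart of $\PP^1$: the bound $(\omega(t))_{z\bar z}|_{P_0}\le C(1+|z|^2)^{-1}$ degenerates relative to $\omega_{FS}$ as $|z|\to\infty$, and a comparison valid over all of $P_0\cong\PP^1$ requires either the two-chart computation above or, equivalently, an appeal to the $U(2)$-symmetry of the reference data $\omega_{\hat E}$ and $\rho$.
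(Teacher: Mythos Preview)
Your argument is correct: taking the $z\bar z$ component of the matrix inequality $\omega(t)\le Ce^{-\rho}\omega_{\hat E}$, letting $\xi\to 0$, and then covering $P_0$ with two antipodal charts yields the claim. The subtle point you identified is real, and your two-chart remedy is legitimate.

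The paper takes a different, coordinate-free route. Rather than reading off a single matrix entry, it bounds the invariant ratio
\[
\frac{\omega(t)\wedge\omega_{\hat E}^2}{\omega_{FS}\wedge\omega_{\hat E}^2}
\;\le\;
C\,e^{-\rho}\,\frac{\omega_{\hat E}^3}{\omega_{FS}\wedge\omega_{\hat E}^2}.
\]
A direct computation shows $\det\omega_{\hat E}=e^\rho$ and $\omega_{FS}\wedge\omega_{\hat E}^2=2\,(\text{standard volume})$ in the $(z,\xi_1,\xi_2)$ chart, so the right-hand side is a genuine constant on all of $\Omega$, not merely bounded on a hemisphere. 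Then, because $\omega_{\hat E}(e_1,\cdot)=0$ for $e_1\in T_pP_0$ at any $p\in P_0$, the left-hand ratio at $p$ collapses exactly to $\omega(t)(e_1,\bar e_1)/\omega_{FS}(e_1,\bar e_1)=\tr_{\omega_{FS}|_{P_0}}(\omega(t)|_{P_0})$. This gives the global bound on $P_0$ in one step.

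The difference in strength is that your estimate $(\omega(t))_{z\bar z}|_{P_0}\le C(1+|z|^2)^{-1}$ loses a factor of $(1+|z|^2)$ relative to $\omega_{FS}$, forcing the chart-by-chart patching; the wedge-product trick retains the extra $(1+|z|^2)^2$ coming from the fibre part of $\omega_{\hat E}^2$, which cancels the $(1+|z|^2)^{-2}$ in $\omega_{FS}$ and produces a bound that is manifestly intrinsic on $\PP^1$. Your approach is more elementary and entirely self-contained; the paper's is slicker and avoids any discussion of charts or $U(2)$-invariance.
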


\begin{proof}

By (\ref{tanest}) in Proposition \ref{keyest}, there exist $C_1, C_2>0$ such that for all $t\in (0, 1]$ and on $\Omega$,
$$\frac{\omega\wedge \omega_{\hat E}^2}{\omega_{FS}\wedge \omega_{\hat E}^2} \leq C_1e^{-\rho}  \frac{\omega_{\hat E}^3}{\omega_{FS}\wedge \omega_{\hat E}^2} \leq C_2.$$
For any point $p\in P_0$, there exist $e_1 \in T_p P_0$ and $e_2, e_3 \in T_p E$ such that they form an orthonormal basis of $T_p E$ with respect to $\hat \omega$. Obviously, $$\omega_{\hat E} (e_1, \cdot ) = 0.$$
Then $$tr_{\omega_{FS} |_{P_0}} ( \omega|_{P_0}) = \frac{\omega (e_1\wedge \overline{e_1})}{\omega_{FS} (e_1\wedge \overline{e_1})}    =\frac{ \omega\wedge\omega_{\hat E}^2 (e_1\wedge \overline{e_1}\wedge \cdot\cdot\cdot \wedge e_3\wedge \overline{e_3} ) } { \omega_{FS}\wedge \omega_{\hat E}^2 (e_1\wedge \overline{e_1}\wedge \cdot\cdot\cdot \wedge e_3\wedge \overline{e_3} ) } \leq C_2. $$

\end{proof}

In fact, the following proposition shows that exceptional rational curve become extinct as $t\rightarrow 0$.
\begin{proposition} \label{diam2} There exists $C>0$ such that for all $t\in (0, 1]$ such that
\begin{equation}
diam(P_0, g(t)|_{P_0}) \leq C t^{1/3}.
\end{equation}

\begin{proof} There exists $C>0$ such that for all $t\in (0, 1]$,
$$ \int_{P_0} \omega(t) = P_0 \cdot [\alpha_t] = t P_0 \cdot [\alpha] \leq C t. $$
Then the proposition is proved by the same argument in the proof of Lemma 3.2 in \cite{SW2}.

\end{proof}

\end{proposition}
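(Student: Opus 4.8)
The plan is to reduce Proposition \ref{diam2} to a purely two-dimensional length-area estimate, which is exactly the content of Lemma 3.2 of \cite{SW2}. The only two facts about $\omega(t)$ that will be used are: (i) the area bound $\int_{P_0}\omega(t)=P_0\cdot[\alpha_t]=t\,(P_0\cdot[\alpha])\le Ct$, which holds because $\alpha_t=\alpha+t[\mathcal{L}_0]$ and $P_0$ is contracted by $\pi$, so $P_0\cdot[\mathcal{L}_0]=0$; and (ii) the uniform upper bound $\omega(t)|_{P_0}\le C\,\omega_{FS}|_{P_0}$ from the preceding corollary. Identifying $P_0\cong\PP^1$ with a fixed metric $g_{FS}$ and writing $g(t)|_{P_0}=f_t\,g_{FS}$, facts (i) and (ii) become $0\le f_t\le C$ on $\PP^1$ together with $\int_{\PP^1}f_t\,dV_{FS}=\int_{P_0}\omega(t)\le Ct$, the first equality because on a complex curve the Riemannian area equals the integral of the K\"ahler form.

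It then remains to prove the following elementary lemma (Lemma 3.2 of \cite{SW2}): for a fixed closed surface $(M,g_0)$ there is $C_0=C_0(M,g_0)$ such that every conformal metric $g=f\,g_0$ with $0\le f\le K$ and $\int_M f\,dV_{g_0}=V$ satisfies $\mathrm{diam}(M,g)\le C_0\,K^{1/6}V^{1/3}$. To prove it I would fix $p,q\in M$ and a minimizing $g_0$-geodesic $\gamma$ from $p$ to $q$, and for a small parameter $\epsilon>0$ introduce the family of competitor curves $\gamma_s$, $|s|\le\epsilon$, obtained by flowing $\gamma$ a signed $g_0$-distance $s$ along its unit normal and closing up with two segments of $g_0$-length $\le|s|$ joining the endpoints of $\gamma_s$ back to $p$ and $q$. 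Each $\gamma_s$ has $g_0$-length bounded in terms of $\mathrm{diam}(M,g_0)$, and the normal exponential map exhibits the union of the uncapped $\gamma_s$ as an $\epsilon$-tube about $\gamma$ with Jacobian bounded above and below by positive constants. Two successive applications of Cauchy--Schwarz --- on each $\gamma_s$, $\int_{\gamma_s}\sqrt f\,ds_0\le(\mathrm{length}_0\gamma_s)^{1/2}\big(\int_{\gamma_s}f\,ds_0\big)^{1/2}$, and then in the $s$-variable, using the Fubini/coarea bound $\int_{-\epsilon}^{\epsilon}\int_{\gamma_s}f\,ds_0\,ds\le C\int_M f\,dV_{g_0}=CV$ --- give $\int_{-\epsilon}^{\epsilon}\mathrm{length}_g(\gamma_s)\,ds\le C\big(\epsilon^{1/2}V^{1/2}+\sqrt K\,\epsilon^2\big)$, the second term being the contribution of the two capping segments. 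Hence some $\gamma_{s^\ast}$ has $g$-length $\le C\big(\epsilon^{-1/2}V^{1/2}+\sqrt K\,\epsilon\big)$; choosing $\epsilon\sim(V/K)^{1/3}$ balances the two terms and yields $d_g(p,q)\le C_0\,K^{1/6}V^{1/3}$, uniformly in $p,q$.

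Feeding this lemma with $M=P_0$, $g_0=g_{FS}$, $K$ the constant of the preceding corollary, and $V\le Ct$, I obtain $\mathrm{diam}(P_0,g(t)|_{P_0})\le C\,t^{1/3}$ whenever $t$ is small enough that the optimal tube width $\epsilon\sim t^{1/3}$ lies below the normal injectivity radius; for $t$ in any fixed compact subinterval of $(0,1]$ the cruder bound $\mathrm{diam}(P_0,g(t)|_{P_0})\le C^{1/2}\mathrm{diam}(P_0,g_{FS})$ (again from $g(t)|_{P_0}\le C\,g_{FS}$) gives the claim after enlarging the constant. I expect the main obstacle to be the surface lemma, and within it the interplay of the two error terms: the Fubini/coarea estimate improves like $\epsilon^{-1/2}V^{1/2}$ as the tube narrows, while the cost of re-attaching the curves to $p$ and $q$ grows like $\sqrt K\,\epsilon$, and it is optimizing this balance --- not the area bound in isolation, which controls no distance at all --- that forces the exponent $1/3$. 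A secondary technical point is arranging the normal injectivity radius and the Jacobian bounds of the tube to be uniform over all minimizing $g_0$-geodesics, which follows from compactness of $M$.
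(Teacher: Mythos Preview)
Your approach matches the paper's: the cohomological area bound plus the pointwise upper bound $\omega(t)|_{P_0}\le C\,\omega_{FS}$ from the preceding corollary feed into the length--area estimate of \cite{SW2}, and your sketch of that lemma (normal tube about a $g_0$-geodesic, two applications of Cauchy--Schwarz, then optimizing $\epsilon\sim(V/K)^{1/3}$ to get the bound $K^{1/6}V^{1/3}$) is correct. One small slip, inherited from an inconsistency in the paper's own notation: the degenerating class is $[\omega_t]=[\mathcal{L}_0]+t[\alpha]$ (cf.\ $\omega_t=\theta+t\omega_0$), so $P_0\cdot[\omega_t]=t\,P_0\cdot[\alpha]$ because $P_0\cdot[\mathcal{L}_0]=0$; with the formula $\alpha+t[\mathcal{L}_0]$ you wrote, the area would be the constant $P_0\cdot[\alpha]$ rather than $O(t)$.
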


We define for $r>0$,
\begin{equation}
\Omega_{ r } = \{ (z, \xi) \in E~|~ e^{\rho} = (1+|z|^2)|\xi|^2 \leq r^2 \} .
\end{equation}

Then we have the following proposition.

\begin{proposition}
 For any $\epsilon>0$, there exist $\varepsilon>0$ and  $\delta>0$ such that for all $t\in (0, \varepsilon)$,
\begin{equation}
diam(\Omega_\delta, g(t)) < \epsilon.
\end{equation}

\end{proposition}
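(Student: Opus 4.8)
The goal is to show the Calabi–Yau metrics $g(t)$ shrink a neighborhood $\Omega_\delta$ of the exceptional curve $P_0$ to diameter $<\epsilon$, uniformly for small $t$. I would exploit the two estimates already proved in Proposition \ref{keyest}: the tangential bound $\omega(t)\le Ce^{-\rho}\omega_{\hat E}$ and the vertical bound $|W|^2_{g(t)}\le Ce^{-\rho/2}$, where $W$ is the unit radial vector field. The point is that both of these degenerate \emph{favorably} as $\rho\to-\infty$: the radial direction shrinks like $e^{-\rho/2}\,d(e^{\rho/2})$ which is integrable down to $P_0$, and the transverse (base $\PP^1$ plus the complementary vertical directions) contribution, while it blows up like $e^{-\rho}$ in $\omega_{\hat E}$, is cut off by the factor $e^\rho$ measuring the size of the region $\Omega_\delta$.

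\textbf{Step 1: estimate the radial path length.} For $p=(z,\xi)\in\Omega_\delta$ consider the radial path $\gamma_p(s)=(z,s\xi)$, $s\in[0,1]$, as in the proof of the diameter corollary above. Along $\gamma_p$ one has $e^{\rho(\gamma_p(s))}=s^2 e^{\rho(p)}$, and the velocity is a multiple of the radial field $V=\sum\xi_\alpha\partial_{\xi_\alpha}$, with $|V|^2_{g(t)}=e^\rho|W|^2_{g(t)}\le Ce^{\rho/2}$ by \eqref{verest}. Hence $|\dot\gamma_p|_{g(t)}\lesssim \frac{1}{s}e^{\rho(p)/4}s^{1/2}=\frac{1}{s^{1/2}}e^{\rho(p)/4}$ (after reparametrizing the radial variable), whose integral over $s\in[0,1]$ is finite and bounded by $C\,e^{\rho(p)/4}\le C\delta^{1/2}$. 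Thus every point of $\Omega_\delta$ is joined to $P_0$ by a path of $g(t)$-length $\le C\delta^{1/2}$, uniformly in $t\in(0,1]$.

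\textbf{Step 2: control the diameter of $P_0$ itself.} By Proposition \ref{diam2}, $\mathrm{diam}(P_0,g(t)|_{P_0})\le Ct^{1/3}$. Therefore any two points $p,q\in\Omega_\delta$ can be joined through $P_0$ by a path of length $\le 2C\delta^{1/2}+Ct^{1/3}$. Choosing first $\delta$ small so that $2C\delta^{1/2}<\epsilon/2$, and then $\varepsilon$ small so that $Ct^{1/3}<\epsilon/2$ for $t\in(0,\varepsilon)$, gives $\mathrm{diam}(\Omega_\delta,g(t))<\epsilon$ for all $t\in(0,\varepsilon)$, which is the claim. (Alternatively, avoiding Proposition \ref{diam2}: one may connect $p$ and $q$ to their radial projections on $\partial\Omega_\delta$, where $g(t)$ is uniformly equivalent to a fixed metric, but this would only shrink $\Omega_\delta\setminus P_0$, so routing through $P_0$ is cleaner; note $P_0$ has small $g(t)$-diameter precisely because $\int_{P_0}\omega(t)=t\,P_0\cdot[\alpha]\to 0$.)

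\textbf{Main obstacle.} The delicate point is Step 1: one must be careful that the radial bound $|V|^2_{g(t)}\le Ce^{\rho/2}$ genuinely integrates to a finite length and that the $U(2)$-invariance lets us use it at \emph{every} point, not just on the slice $\{w=0\}$ — but this is exactly what \eqref{verest} asserts (it was proved $U(2)$-equivariantly). The only real subtlety is bookkeeping the reparametrization of the radial coordinate so that the integrand behaves like $s^{-1/2}$ rather than $s^{-1}$; the factor $e^{\rho}=s^2e^{\rho(p)}$ shrinking along the path is what makes this work, and this is where the codimension-two nature of $P_0$ — which obstructed the naive second-order estimate — is finally turned to our advantage.
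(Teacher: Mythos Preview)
Your proof is correct and follows essentially the same route as the paper: connect any two points of $\Omega_\delta$ to $P_0$ by radial paths whose $g(t)$-length is controlled uniformly in $t$ by the vertical estimate \eqref{verest}, then use Proposition \ref{diam2} to make the diameter of $P_0$ small for small $t$, and conclude by the triangle inequality. Your explicit computation of the radial length (yielding the bound $C\delta^{1/2}$ via $|V|^2_{g(t)}\le Ce^{\rho/2}$ and the integrable $s^{-1/2}$ singularity) is exactly what the paper invokes implicitly when it says the radial arc length can be made smaller than $\epsilon/3$ by choosing $\delta$ small.
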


\begin{proof} Given any two points $p, q \in \Omega_\delta$, there exist $p', q'\in P_0$ such that $p$ and $q$ can be connected to $p'$ and $q'$ by radial paths $\gamma_{p, p'}$ and $\gamma_{q, q'} $. For any $\epsilon>0$, we choose $\delta >0 $ such that the arc length of $\gamma_{p, p'}$ and $\gamma_{q, q'} $  is smaller than $\epsilon/3$ with respect to $g(t)$ for all $t\in (0, 1]$ by applying (\ref{verest}) in Proposition \ref{keyest}. By Proposition \ref{diam2}, we can choose $\varepsilon>0$ sufficiently small such that for $t\in (0, \varepsilon)$,
$$diam(P_0, g(t)|_{P_0}) \leq \frac{\epsilon}{3}.$$
Then
$$ dist_{g(t)} (p, q) \leq dist_{g(t)} (p, p') + dist_{g(t)} (q, q') + diam(P_0, g(t)|_{P_0}) < \epsilon $$
and the proposition follows.

\end{proof}


\bigskip

\section{\bf Proof of Theorem \ref{main1} and its generalizations}\label{section5}

Let $T(X, Y, Y_s)$ be a conifold transition. Let $g_Y$ be the unique singular Calabi-Yau K\"ahler metric associated to the K\"ahler current on $Y$ as defined in section 2.  Note that  $g_Y$ is smooth in $Y_{reg}=Y\setminus \{ y_1, ..., y_d \} $ and so we define a similar distance function on $Y$ as in Definition 5.1 in \cite{SW2}.

\begin{definition}  \label{defndy}  We extend  $g_Y$ on $Y_{reg}$ to a nonnegative (1,1)-tensor  $\tilde{g}_Y$ on the whole space $Y$ by setting $\tilde{g}_Y|_{y_j}( \cdot, \cdot) =0$ for $j=1, ..., d$.  Of course, $\tilde{g}_Y$ may be discontinuous at $y_1$, ..., $y_d$.
 Define a distance function $d_Y: Y \times Y \rightarrow \mathbb{R}$ by
\begin{equation}
d_Y(y, y') = \inf_{\gamma}  \int_0^1  \sqrt{ g_Y ( \gamma'(s), \gamma'(s))}  ds,
\end{equation}
where the infimum is taken over all piecewise smooth paths $\gamma: [0,1] \rightarrow Y$ with $\gamma(0)=y$, $\gamma(1)=y'$.
\end{definition}

The goal is to show that such a metric space is exactly the Gromov-Hausdorff limit of $(X, g(t))$ as $t\rightarrow 0$ and it is isomorphic to the metric completion of $(Y_{reg}, g_Y)$.

\begin{theorem} \label{proof2} $(Y, d_Y)$ is a compact metric space homeomorphic to the projective variety $Y$ itself. Furthermore, $(X, g(t))$ converges to $(Y, d_Y)$ in Gromov-Hausdorff topology as $t\rightarrow 0$.

\end{theorem}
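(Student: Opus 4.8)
The plan is to break the statement into two parts: first, that $(Y, d_Y)$ is a compact metric space homeomorphic to $Y$ with its analytic (Zariski) topology; second, that $(X, g(t)) \to (Y, d_Y)$ in the Gromov-Hausdorff sense. The key input from Section 4 is the collapsing behavior near each exceptional curve $D_j$: the estimates \eqref{tanest} and \eqref{verest} of Proposition \ref{keyest} say that in a fixed neighborhood $\Omega$ of each $D_j$, the metric $g(t)$ is uniformly bounded above and below by the fixed Ricci-flat cone metric $\omega_{\hat E}$ (tangentially) while the radial direction is controlled by $|W|^2_{g(t)} \leq C e^{-\rho/2}$, and the diameter of $\Omega_\delta$ can be made arbitrarily small by shrinking $\delta$ (uniformly in $t$) together with Proposition \ref{diam2}. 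Combined with Proposition \ref{EGZ2}, which gives $C^\infty$ convergence of $g(t)$ to $g_Y$ away from the $D_j$'s, this pins down both the local and global geometry.

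First I would verify $d_Y$ is a genuine distance: symmetry and the triangle inequality are formal, and positivity $d_Y(y, y') > 0$ for $y \neq y'$ follows because, away from the finitely many singular points, $g_Y$ is a genuine smooth metric whose local potentials are bounded, so any path between distinct points either stays in $Y_{reg}$ (where it has positive $g_Y$-length) or must traverse a punctured neighborhood of some $y_j$; in the latter case one uses that $\omega_{CY,\hat E} = \ddbar u_{\hat E}$ near the cone point is a complete metric on $\hat E \setminus \{0\}$ with the puncture at infinite distance, but more precisely one shows via the explicit ansatz that small $g_Y$-balls around $y_j$ are genuine neighborhoods. The identity map $\mathrm{id}: (Y, \text{analytic}) \to (Y, d_Y)$ is then continuous (clear on $Y_{reg}$ since $g_Y$ is smooth there; at each $y_j$ use that the radial paths have small length), and since $Y$ is compact Hausdorff and $(Y, d_Y)$ is Hausdorff, a continuous bijection from a compact space is a homeomorphism — giving compactness of $(Y, d_Y)$ as a bonus. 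One must also check $(Y,d_Y)$ is a length space, which follows since $d_Y$ is defined as an infimum of path lengths.

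For the Gromov-Hausdorff convergence, I would construct approximating maps $F_t: (X, g(t)) \to (Y, d_Y)$ as follows: outside the union of the small neighborhoods $\Omega^j_\delta$ of the $D_j$, use the isomorphism $X^\circ \cong Y_{reg}$ (Proposition \ref{EGZ2} gives that $g(t) \to g_Y$ in $C^\infty_{loc}$ here, so distances are nearly preserved); inside each $\Omega^j_\delta$, collapse everything to $y_j$. To show this is an $\epsilon_t$-Gromov-Hausdorff approximation with $\epsilon_t \to 0$: the surjectivity and the $\epsilon$-dense image are handled by the previous proposition bounding $\mathrm{diam}(\Omega_\delta, g(t))$; the near-isometry of $F_t$ on distances requires comparing a $g(t)$-geodesic between two points $p,q$ in $X$ with its image path. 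The subtle point is when a $g(t)$-geodesic enters one of the collapsing regions: one must argue that cutting out the portion inside $\Omega^j_\delta$ and replacing it by a path along $\partial \Omega^j_\delta$ (which has uniformly bounded $g(t)$-geometry by the corollaries, comparable to $\hat\omega$) changes the length by at most $O(\delta)$, using the radial-path length bound from \eqref{verest} to get from any interior point to the boundary cheaply.

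\textbf{Main obstacle.} I expect the hard part to be the lower bound on distances — showing $d_{g(t)}(p,q) \geq d_Y(F_t(p), F_t(q)) - \epsilon_t$ — because a priori a $g(t)$-geodesic in $X$ might take a "shortcut" through a neighborhood of some $D_j$ that has no analogue downstairs on $Y$, and one must rule out that such shortcuts beat the genuine $Y_{reg}$ distance by more than $\epsilon_t$. This is precisely where the uniform two-sided bound \eqref{tanest} comparing $g(t)$ to the fixed metric $\omega_{\hat E}$ on all of $\Omega$ is essential: it shows that crossing $\Omega_\delta$ costs at least a definite amount in $g(t)$-length (bounded below by the $\omega_{\hat E}$-distance across the annulus $\Omega \setminus \Omega_\delta$, which is uniform in $t$), so the geodesic gains nothing by detouring. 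Closing this argument carefully — tracking how $\delta$, $\varepsilon$ (the threshold on $t$), and $\epsilon_t$ interrelate, and invoking the general structure theory of \cite{CC1, CC2} only to know a Gromov-Hausdorff limit exists — is the crux, and the rest is assembling the pieces already established in Section 4.
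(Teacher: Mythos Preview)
Your proposal is correct and follows essentially the same route as the paper, which simply defers to the argument in Section~3 of \cite{SW2} using the uniform estimates of Proposition~\ref{keyest} and Proposition~\ref{diam2}; your outline is a faithful expansion of that argument. One minor slip: the cone metric $\omega_{CY,\hat E}$ has the tip at \emph{finite} distance (since $u_{\hat E}''\sim e^{2\rho/3}$ is integrable as $\rho\to-\infty$), not infinite --- but you immediately redirect to the correct conclusion, so this does not affect the overall argument.
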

\begin{proof}  The same argument in section 3 in \cite{SW2} can be applied to prove the proposition with uniform estimates from Proposition \ref{keyest} and Proposition \ref{diam2}.

\end{proof}

\begin{theorem}\label{proof1}  Let $(\tilde Y, d_{\tilde Y} )$ be the metric completion of $(Y_{reg}, g_Y)$. Then $(\tilde Y, d_{\tilde Y} )$ is isomorphic to $(Y, d_Y)$.

\end{theorem}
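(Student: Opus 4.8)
The plan is to construct a natural candidate homeomorphism between the two metric spaces and then verify it is distance-preserving. Since $Y_{reg}$ sits inside both $Y$ (as an open dense subset) and $\tilde Y$ (as a dense subset, by definition of metric completion), there is a tautological identification of $Y_{reg}$ with its image in each space; I would show this extends to an isometry. First I would observe that the identity map on $Y_{reg}$ is $1$-Lipschitz from $(Y_{reg}, d_{\tilde Y})$ to $(Y_{reg}, d_Y|_{Y_{reg}})$: indeed $d_Y$ is defined by an infimum over paths in all of $Y$, which is a larger class than paths staying in $Y_{reg}$, so $d_Y \le d_{\tilde Y}$ on $Y_{reg}\times Y_{reg}$. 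Hence the identity extends to a $1$-Lipschitz surjection $\Phi\colon \tilde Y \to \overline{Y_{reg}}^{\,d_Y}$; but by Theorem \ref{proof2} the space $(Y,d_Y)$ is compact and $Y_{reg}$ is dense in it, so $\overline{Y_{reg}}^{\,d_Y} = Y$ and $\Phi\colon \tilde Y \to Y$ is a continuous surjection.

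Next I would show $\Phi$ is injective, equivalently that points of $Y_{reg}$ that are $d_Y$-close are $d_{\tilde Y}$-close, and more precisely that the singular points $y_1,\dots,y_d$ do not get "glued" to interior points or to each other and do not create shortcuts. The key geometric input is Proposition \ref{keyest}, in particular the volume-type bound $|W|^2_{g(t)}\le Ce^{-\rho/2}$ and the metric equivalence $C^{-1}\omega_{\hat E}\le \omega(t)\le Ce^{-\rho}\omega_{\hat E}$ on $\Omega$, which pass to the limit $g_Y$ on $\Omega\setminus P_0 \cong (\Omega \setminus P_0)$ under $X^\circ \simeq Y_{reg}$. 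These give that near each $y_j$ the metric $g_Y$ is, up to uniform constants, bounded below by the flat model $\omega_{\hat E}$ and that radial paths into the singularity have finite, controlled length; consequently a short $d_Y$-path through a small ball around $y_j$ can be replaced, at comparable cost, by a path in $Y_{reg}$ (push it off the singular point along the radial direction, paying only the uniformly small radial length). This is exactly the mechanism already used to prove the diameter estimates $diam(\Omega_\delta,g(t))<\epsilon$; taking $t\to 0$ yields $diam(\Omega_\delta, g_Y)\le \epsilon$ for $\delta$ small, so any $d_Y$-minimizing (or near-minimizing) path through $y_j$ can be rerouted through $Y_{reg}$ with an error $O(\epsilon)$, proving $d_{\tilde Y}\le d_Y + O(\epsilon)$ on $Y_{reg}$ and hence, letting $\epsilon\to 0$, $d_{\tilde Y} = d_Y$ on $Y_{reg}$. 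Then $\Phi$ is a bijective $1$-Lipschitz map with $1$-Lipschitz inverse on the dense set $Y_{reg}$, hence a global isometry by continuity and density, so $(\tilde Y, d_{\tilde Y})$ is isometric—in particular isomorphic as a metric space—to $(Y, d_Y)$.

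I expect the main obstacle to be the rerouting argument, i.e.\ showing that $d_Y$-geodesics are not forced to pass through the singular points in a way that makes $d_{\tilde Y}$ strictly larger than $d_Y$. Concretely one must rule out that the metric completion $\tilde Y$ has "extra" points over some $y_j$ (e.g.\ that the end of $Y_{reg}$ near $y_j$ is a single metric point, not a positive-diameter boundary piece, and that it is not at infinite distance). This is handled precisely by the estimates of Section 4: Proposition \ref{diam2} together with the radial-path bound from \eqref{verest} show the exceptional set shrinks to a point and sits at finite distance, so the link of $y_j$ in $(Y_{reg},g_Y)$ has diameter going to $0$ as one approaches $y_j$. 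As the author notes, this is the same package of estimates behind Definition 5.1 and Section 3 of \cite{SW2}, so once Proposition \ref{keyest} and Proposition \ref{diam2} are in hand, the proof reduces to invoking that argument verbatim; the new content over \cite{SW2} is entirely in having established those uniform estimates in the higher-codimension conifold setting.
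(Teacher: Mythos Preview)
Your proposal is correct and matches the first of the two approaches the paper sketches: a direct, purely analytic argument showing $d_Y=d_{\tilde Y}$ on $Y_{reg}$ by rerouting near-minimizing paths away from the singular points, using the radial length bound \eqref{verest} and the shrinking estimates of Proposition~\ref{diam2} exactly as in the \cite{SW2}/\cite{SW3} framework you invoke. The paper does not spell this out either; it simply points to those references.

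The paper also records a second, shorter route that you do not mention. By Theorem~\ref{proof2}, $(X,g(t))\to (Y,d_Y)$ in Gromov--Hausdorff topology, while Theorem~\ref{RZ} of Rong--Zhang already gives $(X,g(t))\to(\tilde Y,d_{\tilde Y})$ in Gromov--Hausdorff topology; uniqueness of Gromov--Hausdorff limits then forces $(Y,d_Y)=(\tilde Y,d_{\tilde Y})$ with no further work. The trade-off is clear: your direct argument is self-contained and avoids the Cheeger--Colding machinery underlying \cite{RZ}, giving explicit control of the metric near the conifold points, whereas the second approach is essentially a one-line deduction once Theorem~\ref{RZ} and Theorem~\ref{proof2} are in hand. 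Both are valid; the paper presents them as alternatives.
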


\begin{proof}   There are two ways to complete the proof of  the theorem.  The first approach is purely analytic.  We can modify the argument in \cite{SW3} to show that  $(\tilde{Y}, d_{\tilde Y} )$ is homeomorphic to  $Y$ as a projective variety and indeed, $(X, g(t))$ converges to $(Y, d_Y)$ in Gromov-Hausdorff topology. The details can also be found in \cite{SW3} and in \cite{S} for higher codimensional surgeries by the K\"ahler-Ricci flow. This approach does not make use of the general theory on Riemannian manifolds with bounded Ricci curvature \cite{C, CC1, CC2, CCT} and it gives explicit estimates to understand the analytic and geometric contractions.

The second approach relies on the results in \cite{RZ}.  By Theorem \ref{RZ},  $(X, g(t))$ converges to $(\tilde Y, d_{\tilde Y} )$ in Gromov-Hausdorff topology, and hence by the uniqueness of the limiting metric space,  $$(Y, d_Y)=(\tilde Y, d_{\tilde Y} ).$$ The theorem follows from Theorem \ref{proof1}.

\end{proof}

\noindent{\it \large Proof of main results.} Theorem \ref{main1} follows immediately from Theorem \ref{proof1} and Theorem \ref{proof2}. Corollary \ref{main2} is a straightforward consequence of Theorem \ref{main1}. Corollary is proved by combining Theorem \ref{main1} and Theorem \ref{RZ}.

\bigskip

\noindent{\it \large Discussions.} We now discuss some generalizations of Theorem \ref{main1} and future questions. First of all, Theorem \ref{main1}, Corollary \ref{main2} and \ref{main3} can be generalized to higher dimensional conifold small contractions, flops and transitions with little modification. The same estimates can be applied to high codimensional surgery by the K\"ahler-Ricci flow if the exceptional locus is $\PP^n$ with normal bundle $\OO_{\PP^n} (-a_1)\oplus ... \oplus \OO_{\PP^n} (-a_{m+1})$ for $a_i \in \mathbb{Z}^+$. Consequentially, it is shown in \cite{S} that the K\"ahler-Ricci flow  performs certain family of  flipping contractions and resolution, in Gromov-Hausdorff topology. The blow-up limit of Type I singularities of the Ricci flow is a complete shrinking Ricci soliton \cite{H1, EMT}. In the case of the K\"ahler-Ricci flow, finite time singularity arises from contraction of special rational curves. It is natural to conjecture that the curvature tensor blows up at the same rate as the extinction rate of such rational curves. We now make the following conjecture.

\begin{conjecture} With the same assumptions in Theorem \ref{main1}, there exists $C>0$ such that for all $t\in (0, 1]$, such that  the curvature tensor $Rm(t)$ of $g(t)$ is bounded as below
\begin{equation}
\sup_X |Rm(t)|_{g(t)} \leq C t^{-1}.
\end{equation}
Furthermore, the rescaled Ricci-flat K\"ahler metrics $\tilde g(t)$ of $g(t)$ converge to the Ricci-flat K\"ahler metric $g_{CY, \hat E}$ on $\hat E$  with an isolated cone singularity given by  (\ref{limcy}), in pointed Gromov-Hausdorff topology.

\end{conjecture}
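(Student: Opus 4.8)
The plan would be to localize near the exceptional curves exactly as in Section~\ref{section4}: on a compact set of $X^{\circ}$ the curvature of $g(t)$ is already uniformly bounded by Proposition~\ref{EGZ2}, so the entire content of the curvature estimate is concentrated on $\Omega$. The key structural fact is that the Candelas--de la Ossa family $\omega_E(t)=t\,\omega_{FS}+\ddbar u_t$ of Section~\ref{section3} is a single metric up to homothety: one checks directly from the cubic equation that the substitution $\rho\mapsto\rho+c$ turns the parameter $t$ into $te^{-2c/3}$, so that rescaling the fibre coordinates by $\xi\mapsto t^{3/4}\xi$ and the metric by $t^{-1}$ identifies $t^{-1}\omega_E(t)$ with the fixed complete Ricci-flat K\"ahler metric $\omega_E(1)$ on $E=\OO_{\PP^1}(-1)^{\oplus 2}$, and $\omega_E(1)$ is asymptotically conical with tangent cone at infinity $(\hat E,\omega_{CY,\hat E})$ by the $\rho\to+\infty$ asymptotics $u_1'\sim(3/2)^{1/3}e^{2\rho/3}=u_{\hat E}'$. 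Since $\omega_E(1)$ has bounded curvature, this immediately gives $|Rm(\omega_E(t))|_{\omega_E(t)}\le C t^{-1}$ on $\Omega$; so the curvature estimate reduces to showing that $g(t)$ stays uniformly close, after the parabolic rescaling $\tilde g(t)=t^{-1}g(t)$, to this model.

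The main step is to upgrade the uniform metric equivalences of Proposition~\ref{keyest} to uniform $C^\infty$ asymptotics. Writing $\omega(t)=\omega_E(t)+\ddbar\psi_t$ in local charts, both forms are Ricci-flat, so $\psi_t$ solves a complex Monge--Amp\`ere equation whose right-hand side is a bounded ratio of volume forms tending to $1$ on compacta of $\hat E\setminus\{0\}$. Passing to the rescaled picture, where the background $t^{-1}\omega_E(t)\cong\omega_E(1)$ has uniformly bounded geometry and, after the meromorphic slicing of Section~\ref{section4}, the degenerate locus restricted to each $L_w$ has codimension one, I would run a blow-up/contradiction argument to obtain uniform $C^k$ bounds on the rescaled potential difference on compact subsets, using interior Schauder and Evans--Krylov estimates relative to $\omega_E(1)$, the $U(2)$-symmetry, and the bounds of Section~\ref{section4} on $\partial\Omega$ as the outer barrier; the conical-end decay of the model then promotes this to a global bound and yields $|Rm(t)|_{g(t)}\le C t^{-1}$ on all of $X$. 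I expect this estimate to be the main obstacle: the model is noncompact, its curvature is only bounded rather than decaying uniformly in $t$ near $P_0$, and the degenerate locus of the limiting equation has codimension two, so Yau's second-order estimate against a fixed good background is unavailable and one is forced into the fibrewise slicing of Section~\ref{section4} combined with a careful rescaling analysis.

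Granting the curvature bound, the pointed Gromov--Hausdorff statement would follow from Cheeger--Colding theory together with the $C^k$ estimates. Base-pointing at $p_t\in P_0$ and rescaling by $t^{-1}$, the spaces $(X,t^{-1}g(t),p_t)$ are noncollapsed with bounded curvature on unit balls; a subsequential limit is a complete Ricci-flat K\"ahler manifold, identified on a neighborhood of its limiting $\PP^1$ with $\omega_E(1)$ by the estimates of the previous step, and since a complete Ricci-flat K\"ahler metric on $E$ with this cohomology class and this asymptotic cone is unique, the whole family converges to $(E,\omega_E(1),p_\infty)$, whose asymptotic cone is exactly $(\hat E,\omega_{CY,\hat E})$. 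Choosing instead base points at the scale at which the exceptional $\PP^1$ has just collapsed -- equivalently rescaling by $t^{-\beta}$ with $0<\beta<1$ and keeping $p_t\in P_0$ -- the neck and the $\PP^1$ both shrink into the base point and only the conical region survives, so $(X,t^{-\beta}g(t),p_t)\to(\hat E,\omega_{CY,\hat E},\mathrm{vertex})$ directly, which is the statement of the Conjecture. Finally one records, as in \cite{To1}, that away from the exceptional curves the convergence is smooth.
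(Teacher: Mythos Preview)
The statement you are attempting to prove is labelled in the paper as a \emph{Conjecture}, not a theorem, and the paper provides no proof of it. It appears in the closing ``Discussions'' paragraph of Section~\ref{section5} as an open problem motivated by the estimates of Section~\ref{section4}; the only follow-up remark is that the metric singularity of $g_Y$ near an ordinary double point \emph{should} be asymptotically modelled on $\omega_{CY,\hat E}$. There is therefore nothing in the paper against which to compare your argument.

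As for the content of your sketch: the scaling observation for the Candelas--de la Ossa model and the reduction of the curvature bound to a uniform $C^2$ comparison with $\omega_E(t)$ are natural and correct heuristics, and you rightly identify the core difficulty --- promoting Proposition~\ref{keyest} to a genuine $C^\infty$ (or even $C^2$) estimate relative to the model on a region that expands to all of $E$ after rescaling. However, the proposed ``blow-up/contradiction argument \ldots\ using interior Schauder and Evans--Krylov estimates relative to $\omega_E(1)$'' is not a proof but a restatement of the problem: to run Evans--Krylov one already needs a uniform two-sided metric equivalence with the model on each rescaled ball, and Proposition~\ref{keyest} only gives $C^{-1}\omega_{\hat E}\le\omega(t)\le Ce^{-\rho}\omega_{\hat E}$, which after the rescaling does not control $\tilde g(t)$ uniformly near $P_0$. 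Likewise, the uniqueness assertion for complete Ricci-flat K\"ahler metrics on $E$ with prescribed asymptotic cone and cohomology class is itself a nontrivial statement that you invoke without justification. In short, your outline is a reasonable programme for attacking the conjecture, but it does not close the gap that the paper leaves open, and there is no proof in the paper to validate it against.
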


In particular, the metric singularity of $g_Y$ near the ordinary double point should be asymptotically close to the local model  given by (\ref{limcy}).



\bigskip
\bigskip

\noindent{\bf Acknowledgements} The author would like to thank Yuguang Zhang for a number of  enlightening discussions and Valentino Tosatti for many helpful suggestions.

\bigskip
\bigskip

\end{document}